\newcommand{\cB}{{\mathcal B}}
\newcommand{\cD}{{\mathcal D}}
\newcommand{\cE}{{\mathcal E}}
\newcommand{\cF}{{\mathcal F}}
\newcommand{\cH}{{\mathcal H}}
\newcommand{\cL}{{\mathcal L}}
\newcommand{\cN}{{\mathcal N}}
\newcommand{\cP}{{\mathcal P}}
\newcommand{\cR}{{\mathcal R}}
\newcommand{\cU}{{\mathcal U}}
\newcommand{\cW}{{\mathcal W}}
\newcommand{\sbm}[1]{\left[\begin{smallmatrix} #1
		\end{smallmatrix}\right]}
\newtheorem{thm}{Theorem}[section]
\newtheorem{corollary}[thm]{Corollary}
\newtheorem{lemma}[thm]{Lemma}
\newtheorem{proposition}[thm]{Proposition}
\theoremstyle{definition}
\newtheorem{definition}[thm]{Definition}
\newtheorem{remark}[thm]{Remark}
\newtheorem{example}[thm]{Example}
\numberwithin{equation}{section}
\def\textmatrix#1&#2\\#3&#4\\{\bigl({#1 \atop #3}\ {#2 \atop #4}\bigr)}
\def\dispmatrix#1&#2\\#3&#4\\{\left({#1 \atop #3}\ {#2 \atop #4}\right)}
\numberwithin{equation}{section}
\def\textmatrix#1&#2\\#3&#4\\{\bigl({#1 \atop #3}\ {#2 \atop #4}\bigr)}
\def\dispmatrix#1&#2\\#3&#4\\{\left({#1 \atop #3}\ {#2 \atop #4}\right)}
\begin{document}

\title[Distinguished varieties]{Distinguished varieties through the Berger--Coburn--Lebow Theorem}

\author[T. Bhattacharyya]{Tirthankar Bhattacharyya}
\address{Department of Mathematics, Indian Institute of Science, Bengaluru-560012, India}
\email{tirtha@iisc.ac.in}

\author[P. Kumar]{Poornendu Kumar}
\address{Department of Mathematics, Indian Institute of Science, Bengaluru-560012, India}
\email{poornendukumar@gmail.com}

\author[H. Sau]{Haripada Sau}
\address{Tata Institute of Fundamental Research, Centre for Applicable Mathematics, Sharada Nagar, Chikkabommsandra, Bengaluru-560065, India.}
\email{haripadasau215@gmail.com}

\subjclass[2010]{Primary: 47A13. Secondary: 32C25, 47A20, 47A48, 47A57.}
\keywords{Distinguished varieties, commuting isometries, inner functions, linear pencils}

\maketitle

\begin{abstract}

A distinguished algebraic variety in $\mathbb{C}^2$ has been the focus of much research in recent years because of good reasons. This note gives a different perspective.

\begin{enumerate}

\item We find a new characterization of an algebraic variety $\cW$ which is distinguished with respect to the bidisc. It is in terms of the joint spectrum of a pair of commuting linear matrix pencils.

\item There is a characterization known of $\mathbb{D}^2\cap\mathcal{W}$ due to a seminal work of Agler and McCarthy. We show that Agler--McCarthy characterization can be obtained from the new one and vice versa.

\item En route, we develop a new realization formula for operator-valued contractive analytic functions on the unit disc.

\item There is a one-to-one correspondence between operator-valued contractive holomorphic functions and {\em canonical model triples}. This pertains to the new realization formula mentioned above.

\item Pal and Shalit gave a characterization of an algebraic variety, which is distinguished with respect to the symmetrized bidisc, in terms of a matrix of numerical radius no larger than $1$. We refine their result by making the class of matrices strictly smaller.

\item In a generalization in the direction of more than two variables, we characterize all one-dimensional algebraic  varieties which are distinguished  with respect to the polydisc.
\end{enumerate}
At the root of our work is the Berger--Coburn--Lebow theorem characterizing a commuting tuple of isometries.

\end{abstract}

\section{Introduction}
One of the central objects in algebraic geometry is an algebraic variety.

\begin{definition}
A subset $\mathcal W$ of the $d$-dimensional complex Euclidean space $\mathbb{C}^d$ is called an {\em algebraic variety} if
$$\mathcal{W}=\{(z_1, z_2, \ldots,z_d)\in{\mathbb{C}}^d: \xi_{\alpha}(z_1, z_2, \ldots,  z_d)=0 \text{ for all } \alpha\in{\Lambda}\}$$
where $\Lambda$ is an index set and $\xi_{\alpha}$ are in $\mathcal{C}[z_1, z_2, \ldots, z_d]$, the ring of polynomials in $d$ variables with complex coefficients.\end{definition}

In other words, $\mathcal{W}$ is the {\em zero set} $Z(S)$ of the collection of  polynomials $S=\{\xi_{\alpha}: \alpha\in{\Lambda}\}$. The objects of study in this note are certain special algebraic varieties defined below.
\begin{definition}
Given a non-empty, polynomially convex domain $\Omega$ in $\mathbb{C}^d$, its {\em distinguished boundary} $b\Omega$ is defined to be the smallest closed subset $C$ of $\overline{\Omega}$ such that every function in $\mathcal{A}(\Omega)$, the algebra of continuous functions on $\overline{\Omega}$ which are holomorphic in $\Omega$, attains its maximum modulus on $C$.

Let $\partial \Omega$ be the topological boundary of $\Omega$. Given an algebraic variety $\mathcal{W}$ in $\mathbb C^d$ if it so happens that $\mathcal{W} \cap \partial{\Omega} = \mathcal{W} \cap {b\Omega}$ and $\cW\cap\Omega$ is non-empty, then $\mathcal W$ is called a {\em distinguished variety} with respect to $\Omega$.
\end{definition}

         Given a non-trivial algebraic variety $\mathcal{W}$ in $\mathbb{C}^2$, by a general result, we can find one polynomial $\xi$ such that $\mathcal{W}=Z(\xi)$, the zero set of $\xi$. A large part of this note will be concerned with distinguished varieties with respect to the bidisc, i.e., those $Z(\xi)$ which satisfy $Z(\xi)\cap\mathbb{D}^2\neq\emptyset$ and $Z(\xi)\cap\partial\mathbb{D}^2 = Z(\xi)\cap\mathbb{T}^2$, where $\mathbb{D}=\{z\in\mathbb{C}: |z|<1\}$, the open unit disc in $\mathbb{C}$. Sometimes we shall call it just a {\em distinguished variety} while it will be clear from the the context that we are dealing with a distinguished variety with respect to the bidisc.

         Distinguished varieties have been in the focus ever since it was proved that for a pair of commuting matrices $T_1$ and $T_2$ which satisfy  \begin{enumerate}
\item $\|T_i\|\leq 1 $ for $i=1, 2,$
\item neither $T_1$ nor $T_2$ has an eigenvalue of modulus $1$,\end{enumerate} there is a distinguished variety $\mathcal{W}$ such that
$$
\|\xi(T_1,T_2)\|\leq \sup_{\mathbb D^2\cap\mathcal{W}}|\xi(z_1, z_2|,
$$ for every $\xi\in\mathbb{C}[z_1, z_2]$. A considerable amount of analysis has been carried out on distinguished varieties, see \cite{AM}, \cite{AM2006}, \cite{DGH}, \cite{DSJFA}, \cite{DSS}, \cite{DJM}, \cite{DU}, \cite{JKM}, \cite{Knese2007}, \cite{Knese-TAMS2010}, \cite{DS} and \cite{Vegulla}. Because of such prominence of distinguished varieties, a simple description is desirable. If one looks at $\mathbb{D}^2\cap\mathcal{W}$, then Agler and McCarthy gave a characterization in Theorem 1.12 of \cite{AM}. We improve their result in Section \ref{throughinner}.

       Due to the presence of a rational function in the characterization obtained in \cite{AM}, it is inherently challenging to give the description of the whole variety $\mathcal W$ instead of just $\mathbb{D}^2\cap\mathcal{W}$. Motivated by this, we discover a commuting pair of linear matrix pencils whose joint spectrum catches the whole of $\mathcal{W}$. This is our main theorem and is the content of Section \ref{S:LinPencil}.

       How to go back and forth between these two descriptions of a distinguished variety now becomes a natural question. This question leads us to a new realization formula for a bounded holomorphic operator-valued function on the unit disc. This new realization formula is different from the classically well-known one and allows us to tie up our new description of a distinguished variety with that of Agler and McCarthy. This is the content of Section \ref{AMandBCL}.

Often in this note we shall talk of vector-valued Hardy spaces. For a Hilbert space $\mathcal E$, the Hardy space of $\mathcal E$-valued functions is
$$H^2(\mathcal E) = \{ f : \mathbb D \rightarrow \mathcal E \mid f \text{ is analytic and } f(z) = \sum_{n=0}^{\infty} a_n z^n \text{ with } \sum_{n=0}^{\infty} \|a_n\|_\cE^2 < \infty\}.$$
Here the $a_n$ are from $\mathcal E$. This is a Hilbert space with the inner product
$$\langle \sum_{n=0}^{\infty} a_n z^n , \sum_{n=0}^{\infty} b_n z^n \rangle =  \sum_{n=0}^{\infty} \langle a_n , b_n \rangle_\cE$$ and is identifiable with $H^2 \otimes \mathcal E$ where $H^2$ stands for the Hardy pace of scalar-valued functions on $\mathbb D$. Naturally, if $\varphi$ is a $\mathcal B (\mathcal E)$-valued bounded analytic function on $\mathbb D$, then it induces a multiplication operator $M_\varphi$ on $H^2(\mathcal E)$.

       The new developments, viz., the main theorem characterizing distinguished varieties and the realization formula depend on a crucial result of Berger, Coburn and Lebow (Theorem 3.1 in \cite{BCL}) which states that given a pair of commuting isometries $(V_1, V_2)$ on a Hilbert space $\mathcal{H}$, there is a reducing subspace $\cH_u$ (reducing both $V_1$ and $V_2$) such that the product $V=V_1V_2$, when restricted to $\cH_u$, is a unitary operator and the pair $(V_1|_{\cH_u^\perp}, V_2|_{\cH_u^\perp})$ is unitarily equivalent to a commuting pair of multiplication operators  $(M_{P^\perp U+zPU}, M_{U^*P+zU^*P^\perp})$ where $U$ and $P$ are respectively a unitary operator and a projection operator acting on some Hilbert space $\mathcal{F}$ and the multiplication operators act on the vector-valued Hardy space $H^2(\mathcal{F})$. In fact, $\cH_u$ is the unitary part in the wold decomposition of the isometry $V$. We shall refer to this result as the {\em Berger--Coburn--Lebow (BCL)  Theorem}.

       Because of the BCL Theorem, the triple $(\cF, P, U)$ will have a large presence in our work. So we make a definition.

 \begin{definition}
 If $P$ is an orthogonal projection and $U$ is a unitary acting on a Hilbert space $\cF$, then we shall call $(\cF, P, U)$ a {\em model triple}. When $\cF$ is finite dimensional, $(\cF, P, U)$ will be called a {\em finite (dimensional) model triple}.

 Two model triples $(\cF_1, P_1, U_1)$ and $(\cF_2, P_2, U_2)$ are called {\em unitarily equivalent} if there is a unitary operator from the Hilbert space $\cF_1$ to the Hilbert space $\cF_2$ which intertwines the projection $P_1$ with the projection $P_2$ and intertwines the unitary $U_1$ with the unitary $U_2$.
 \end{definition}

\begin{definition}
The pair of commuting isometries
\begin{align}\label{BCLmodel}
(M_{(P^\perp  + zP)U},M_{U^*(P + zP^\perp)})
\end{align}will be called the {\em Berger--Coburn--Lebow model} (\textit{BCL model}) associated to the model triple $(\cF,P,U)$.

Given a model triple, the two functions $P^\perp U+zPU$ and $U^*P + zU^*P^\perp$ will be called the {\em Berger--Coburn--Lebow (BCL) functions}.
\end{definition}

           Every model triple gives a contractive analytic function. The realization formula mentioned above proves that every contractive analytic function arises from a model triple.

           More can be said on the relationship of model triples and contractive analytic functions. It is easy to see that unitarily equivalent model triples give rise to unitarily equivalent contractive analytic functions. The converse of this statement cannot be formulated without a further intricate analysis. We show that given a contractive analytic function, a canonical choice of a model triple can be naturally made so that when two contractive analytic functions are unitarily equivalent, so are the associated canonical model triples. This category theoretic result is in Section \ref{Category} .

              The new characterization of distinguished varieties with respect to the bidisc influences the characterization of distinguished varieties with respect to the symmetrized bidisc
$$ \mathbb G := \{ (z_1 + z_2, z_1z_2) : |z_1| < 1 \mbox{ and } |z_2| < 1\}.$$
Pal and Shalit characterized all distinguished varieties with respect to the symmetrized bidisc in \cite{Pal-Shalit}. We give an improvement of this result, as an application of the main theorem, in Section \ref{S:SymmBDisc}.

           In the final section, we characterize all one-dimensional distinguished varieties of the polydisc using the full force of Theorem 3.1 of \cite{BCL} by Berger, Coburn and Lebow.

\section{Through the inner function} \label{throughinner}

A matrix-valued holomorphic function $\Psi$ on $\mathbb{D}$ is said to be {\em rational inner} if there is a block unitary matrix
$\cU=\begin{bmatrix}
A & B \\
C & D
\end{bmatrix}$
such that $\Psi(z)=A+zB(I-zD)^{-1}C$. It can be assumed without loss of generality that the contractive matrix $D$ has no unimodular eigenvalue. Thus, $\Psi$ is analytic in a neighbourhood of $\overline{\mathbb D}$ and $\Psi(z)$ is a unitary matrix if $z\in\mathbb{T}$. Obviously, there is a matrix polynomial $F$ and scalar polynomial $q$ such that $\Psi(z)=F(z)/q(z)$ and there is no factor of $q$ that divides every entry of $F$. The poles of $\Psi$, i.e., the zeros of $q$, are away from $\overline{\mathbb D}$.

If $\cW_{\Psi}$ is the algebraic variety
$$\cW_{\Psi}:=\{(z_1, z_2)\in\mathbb{C}^2: \det(F(z_1)-z_2q(z_1)I)=0\},$$
then $\mathbb{D}^2\cap\cW_{\Psi}=\{(z_1, z_2)\in{\mathbb{D}^2}: \det(\Psi(z_1)-z_2I)=0\}$.

     Let $\alpha$ be a zero of $q$. If there is a $\beta$ such that $(\alpha, \beta)\in{\cW_{\Psi}}$, then $\det F(\alpha)=0$. This means that $(\alpha , z_2)$ is a zero of the polynomial $\det(F(z_1)-z_2q(z_1)I)$ for every $z_2$. Thus, there is an $m_{\alpha} \ge 1$ such that $\det(F(z_1)-z_2q(z_1)I)$ is divisible by $(z_1-\alpha)^{m_\alpha} $. Take the largest such $m_\alpha$ for every $\alpha$ that is a zero of $q$. Then, there is a polynomial $\xi_{\Psi}$ such that
\begin{align}\label{XiPsi}
 \det(F(z_1)-z_2q(z_1)I)=\prod_{\alpha\in Z(q)}(z_1-\alpha)^{m_\alpha}\xi_\Psi(z_1,z_2)
 \end{align}
 with the understanding that $m_\alpha$ could be $0$ for some $\alpha$ (precisely those $\alpha$ for which there is no $\beta$ satisfying $(\alpha, \beta)\in{\cW_{\Psi}}$). We now restate the Agler--McCarthy--Knese theorem in a way that allows a description of the whole variety instead of just its portion in $\mathbb{D}^2$, under a natural condition.

 We use the notations $\mathbb E$ for the complement of $\overline{\mathbb D}$ in $\mathbb C$, and $\nu(A)$ for the {\em numerical radius} of a square matrix $A$.

  \begin{thm}\label{Sec-AMDVTheorem}
Let $\Psi$ and $\cW_\Psi$ be as above. Then the following are equivalent:
\begin{itemize}
\item[(i)] $\cW_\Psi$ is a distinguished variety with respect to $\mathbb D^2$;
\item[(ii)] $\nu(\Psi(z)) < 1$ for all $z$ in $\mathbb D$;
\item[(iii)] $Z(\xi_\Psi)\subset \mathbb D^2\cup\mathbb T^2\cup\mathbb E^2$;
\item[(iv)] $Z(\xi_\Psi)$ is a distinguished variety.
\end{itemize}

Conversely, if $\cW$ is any distinguished variety with respect to $\mathbb D^2$, then there is a matrix-valued rational inner function $\Psi$ on $\mathbb D$ such that
$$
\cW\cap\mathbb D^2=\cW_\Psi\cap\mathbb D^2=Z(\xi_\Psi)\cap\mathbb D^2.
$$Moreover, $\cW=Z(\xi_\Psi)$ if and only if both $\cW$ and $Z(\xi_\Psi)$ are contained in $\mathbb D^2\cup\mathbb T^2\cup\mathbb E^2$.
\end{thm}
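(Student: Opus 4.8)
The plan is to split the four-way equivalence into the reformulation $(i)\Leftrightarrow(ii)$, the transfer $(i)\Leftrightarrow(iv)$ between $\cW_\Psi$ and $Z(\xi_\Psi)$, and the refinement $(ii)\Rightarrow(iii)\Rightarrow(iv)$, and then to settle the converse via the Agler--McCarthy realization together with a matching of irreducible components. Two facts get used repeatedly. First, the zeros of $q$ are the poles of $\Psi$ and hence lie in $\mathbb E$, so the lines $\{z_1=\alpha\}$, $\alpha\in Z(q)$, occurring in $\cW_\Psi$ through \eqref{XiPsi} are disjoint from $\overline{\mathbb D}^2$; thus $\cW_\Psi$ and $Z(\xi_\Psi)$ coincide on $\overline{\mathbb D}^2$, in particular on $\mathbb D^2$, $\mathbb T^2$ and $\partial\mathbb D^2$, while $Z(\xi_\Psi)\subseteq\cW_\Psi$ throughout $\mathbb C^2$. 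Second, a matrix $A$ with $\|A\|\le1$ has $\nu(A)<1$ iff $A$ has no eigenvalue of modulus $1$: indeed $\nu(A)\le\|A\|\le1$, and if $\nu(A)=1$, attained at a unit vector $x$, equality in $|\langle Ax,x\rangle|\le\|Ax\|\,\|x\|\le1$ forces $Ax=\lambda x$ with $|\lambda|=1$; the converse is clear. Since $\Psi$ is rational inner, $\|\Psi(z)\|\le1$ on $\mathbb D$ and $\Psi(z)$ is unitary on $\mathbb T$, so $(ii)$ says precisely that no $\Psi(z)$ with $z\in\mathbb D$ has a unimodular eigenvalue, equivalently that the spectral radius of $\Psi(z)$ is less than $1$ for every $z\in\mathbb D$.

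For $(i)\Leftrightarrow(ii)$, I would unwind the definition of a distinguished variety over $\partial\mathbb D^2=(\mathbb T\times\overline{\mathbb D})\cup(\overline{\mathbb D}\times\mathbb T)$. If $z_1\in\mathbb T$, then $q(z_1)\ne0$ and $\Psi(z_1)$ is unitary, so any $z_2$ with $(z_1,z_2)\in\cW_\Psi$ is a unimodular eigenvalue of $\Psi(z_1)$, and the point lies automatically in $\mathbb T^2$. On the other hand, $\cW_\Psi$ contains a point $(z_1,z_2)$ with $z_2\in\mathbb T$ and $z_1\in\mathbb D$ exactly when some $\Psi(z_1)$ $(z_1\in\mathbb D)$ has a unimodular eigenvalue, i.e.\ precisely when $(ii)$ fails; and when $(ii)$ holds, each such $\Psi(z_1)$ has spectral radius $<1$, so any of its eigenvalues gives a point of $\cW_\Psi\cap\mathbb D^2$, which is thus nonempty. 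Hence $\cW_\Psi$ is a distinguished variety iff $(ii)$; combined with the first fact above, $\cW_\Psi$ is distinguished iff $Z(\xi_\Psi)$ is, which is $(i)\Leftrightarrow(iv)$.

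The implication $(ii)\Rightarrow(iii)$ is the heart of the matter and turns on the reflexivity of $Z(\xi_\Psi)$ under $\rho(z_1,z_2)=(1/\bar z_1,1/\bar z_2)$. Because $\Psi$ is unitary on $\mathbb T$, the rational matrix function $z\mapsto\Psi(1/\bar z)^{*}\Psi(z)$ equals the identity on $\mathbb T$, hence identically, so $\Psi(z)^{-1}=\Psi(1/\bar z)^{*}$ wherever $\Psi(z)$ is invertible; comparing spectra, $\operatorname{spec}\Psi(1/\bar z)=\{1/\bar\lambda:\lambda\in\operatorname{spec}\Psi(z)\}$, so $\cW_\Psi$ is $\rho$-invariant off the proper subvariety where $\Psi$ has a pole or is singular, and a Zariski-density argument on the pure one-dimensional set $Z(\xi_\Psi)$ (or Knese's theorem that a distinguished variety has a reflexive defining polynomial) upgrades this to $\rho$-invariance of $Z(\xi_\Psi)$. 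Granting it: since $Z(\xi_\Psi)\subseteq\cW_\Psi$, the second fact above and $(ii)$ give $Z(\xi_\Psi)\cap(\mathbb D\times\mathbb C)\subseteq\mathbb D^2$ and $Z(\xi_\Psi)\cap(\mathbb T\times\mathbb C)\subseteq\mathbb T^2$, and applying $\rho$ to the first yields $Z(\xi_\Psi)\cap(\mathbb E\times\mathbb C)\subseteq\mathbb E^2$; as these three cases exhaust $Z(\xi_\Psi)$, $(iii)$ follows. For $(iii)\Rightarrow(iv)$: any point of $Z(\xi_\Psi)\cap\partial\mathbb D^2$ lies in $\overline{\mathbb D}^2$, hence in $\mathbb D^2\cup\mathbb T^2$, and not in the open set $\mathbb D^2$, so in $\mathbb T^2$; while the fibre of $Z(\xi_\Psi)$ over $z_1=0$ is nonempty (it is the spectrum of the square matrix $\Psi(0)$), so by $(iii)$ it lies in $\mathbb D^2$, giving $Z(\xi_\Psi)\cap\mathbb D^2\ne\emptyset$. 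Now $(i)\Leftrightarrow(ii)\Leftrightarrow(iv)$ and $(ii)\Rightarrow(iii)\Rightarrow(iv)$ close the cycle.

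For the converse, the existence of a rational inner $\Psi$ with $\cW\cap\mathbb D^2=\cW_\Psi\cap\mathbb D^2$ is the Agler--McCarthy theorem \cite{AM} (recovered from the main theorem in Sections~\ref{S:LinPencil} and~\ref{AMandBCL}), and $\cW_\Psi\cap\mathbb D^2=Z(\xi_\Psi)\cap\mathbb D^2$ was recorded just before the theorem. For the last biconditional: if $\cW=Z(\xi_\Psi)$, then $Z(\xi_\Psi)$ is a distinguished variety, so $(iv)$ hence $(iii)$ holds and both sets lie in $\mathbb D^2\cup\mathbb T^2\cup\mathbb E^2$. Conversely, assume both do. A variety contained in $\mathbb D^2\cup\mathbb T^2\cup\mathbb E^2$ has no vertical or horizontal component, since a line $\{z_1=c\}$ would contain a point with $z_1\in\overline{\mathbb D}$, $z_2\in\mathbb E$ (or $z_1\in\mathbb E$, $z_2\in\mathbb D$); hence each irreducible component projects dominantly onto the $z_1$-axis, and, since its points lying over a point of $\mathbb D$ are forced into $\mathbb D^2$, it meets $\mathbb D^2$ in an infinite (Zariski-dense) subset. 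As $\cW\cap\mathbb D^2=Z(\xi_\Psi)\cap\mathbb D^2$, each irreducible component of $\cW$ shares infinitely many points with $Z(\xi_\Psi)$, hence lies in and, being irreducible, equals one of its components, and symmetrically; so $\cW=Z(\xi_\Psi)$. The one step I expect to demand real care is the $\rho$-invariance of $Z(\xi_\Psi)$ behind $(ii)\Rightarrow(iii)$: the clean spectral identity controls only a Zariski-open part of $\cW_\Psi$, and the poles of $\Psi$ together with the coordinate axes must be dealt with separately.
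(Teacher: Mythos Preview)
Your argument is correct and its backbone---the equivalences $(i)\Leftrightarrow(ii)$ and $(i)\Leftrightarrow(iv)$, and the implication $(iii)\Rightarrow(iv)$---coincides with the paper's proof almost verbatim. The two places where your route genuinely diverges are the incorporation of $(iii)$ and the ``moreover'' clause.

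For $(iii)$, the paper goes $(iv)\Rightarrow(iii)$: once $Z(\xi_\Psi)$ is known to be distinguished, it invokes Knese's Proposition~4.1 (a distinguished variety all of whose irreducible components meet $\mathbb D^2$ lies in $\mathbb D^2\cup\mathbb T^2\cup\mathbb E^2$) after checking that each irreducible component of $Z(\xi_\Psi)$ does meet $\mathbb D^2$. You instead attack $(ii)\Rightarrow(iii)$ head-on via the reflection identity $\Psi(z)^{-1}=\Psi(1/\bar z)^{*}$, which immediately gives the containment over $\mathbb D$, $\mathbb T$, and $\mathbb E\setminus Z(q)$; the only residual points are those of $Z(\xi_\Psi)$ sitting over the finitely many poles $\alpha\in Z(q)$. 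Your Zariski-density step handles these, but it really does need the care you flag: one must check that neither $Z(\xi_\Psi)$ nor its reflected variety has a line component lying in the excluded set (no vertical lines by construction of $\xi_\Psi$; no horizontal lines because under $(ii)$ the spectrum of $\Psi$ over $\mathbb D$ is in $\mathbb D$ while over $\mathbb T$ it is in $\mathbb T$). With that verified, the two curves agree on a Zariski-open set and hence everywhere. This is a self-contained substitute for citing Knese, at the cost of reproving the $\mathbb T^2$-symmetry of $\xi_\Psi$ that is the content of his Propositions~4.1 and~4.3; the paper's black-box citation is shorter, yours is more explicit.

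For the ``moreover'' clause, your irreducible-component matching is a clean alternative to the paper's argument: the paper appeals to Knese's Proposition~4.3 to obtain the reflection symmetry of both varieties, uses it to match the $\mathbb E^2$ parts from the $\mathbb D^2$ parts, and then closes up on $\mathbb T^2$ by approximation. Your observation that a curve contained in $\mathbb D^2\cup\mathbb T^2\cup\mathbb E^2$ has no axis-parallel components, hence every irreducible component meets $\mathbb D^2$ in a Zariski-dense set, lets you conclude $\cW=Z(\xi_\Psi)$ by pure algebraic-geometry bookkeeping without invoking symmetry or limits. Both approaches are valid; yours is arguably more transparent.
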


Before we prove this theorem, we pause to note that a distinguished variety need not be contained in $\mathbb D^2\cup\mathbb T^2\cup\mathbb E^2$, in general. For example, consider the variety $Z(\xi)$, where $\xi(z_1,z_2)=(z_1-z_2)(z_1z_2-1)$. Then
$$Z(\xi)\cap\mathbb D^2= \{(z,z):z\in\mathbb D\},$$
the diagonal variety and $Z(\xi)\cap\partial \mathbb D^2=Z(\xi)\cap\mathbb T^2=\{(z,z):z\in\mathbb T\}\cup\{(z,\bar z):z\in\mathbb T\}$. Therefore $Z(\xi)$ is a distinguished variety with respect to $\mathbb D^2$, but it is clearly not contained in $\mathbb D^2\cup\mathbb T^2\cup\mathbb E^2$. However, Knese \cite[Proposition 4.1]{Knese-TAMS2010} showed that if a variety $Z(\xi)$ is such that each of its irreducible components intersects $\mathbb D^2$, then $Z(\xi)$ must be contained in $\mathbb D^2\cup\mathbb T^2\cup\mathbb E^2$. The equivalence of (iii) and (iv) in Theorem \ref{Sec-AMDVTheorem} infers that the only way the variety $Z(\xi_\Psi)$ can be distinguished is that it be contained in $\mathbb D^2\cup\mathbb T^2\cup\mathbb E^2$. We shall demonstrate one more class of distinguished varieties adhering to this phenomenon in Theorem \ref{ourDV}.
\begin{proof}We complete the proof by establishing
$$
(ii)\Leftrightarrow(i)\Leftrightarrow(iv)\Leftrightarrow(iii).
$$
{\em Proof of $(ii)\Leftrightarrow(i)$:} First we assume (ii). To show that $\cW_\Psi$ is a distinguished variety, we first prove that $\cW_\Psi\cap\mathbb D^2$ is non-empty. Let $z_1\in\mathbb D$. Then by (ii), any eigenvalue $z_2$ of $\Psi(z_1)$ will come from $\mathbb D$ and for any such $z_2$, $(z_1,z_2)\in\cW_\Psi$. Secondly, we show that $\cW_\Psi\cap\partial\mathbb D^2=\cW_\Psi\cap\mathbb T^2$. To that end let $(z_1,z_2)\in\cW_\Psi\cap\partial\mathbb D^2$. If $|z_1|<1$, then by (ii) again $|z_2|<1$ . If $|z_1|=1$, then since $\Psi$ is rational inner, $\Psi(z_1)$ is unitary and hence $|z_2|=1$. Therefore $\cW_\Psi$ is a distinguished variety.

 Conversely, assume (i). Suppose on the contrary that there is a point $z_0$ in $\mathbb D$ such that $\nu(\Psi(z_0)) = 1$. Then there is a unit vector $h_0$ such that $|\langle \Psi(z_0) h_0 , h_0\rangle | = 1$ and this, through an application of Cauchy-Schwarz inequality, gives us $\Psi(z_0)h_0 = \exp{(i\theta)} h_0$ for some $\theta$ showing that the point $( z_0, \exp{(i\theta)})$ is in $\cW_\Psi$. This violates the distinguishedness of $\cW_{\Psi}$.

{\em Proof of $(i)\Leftrightarrow(iv)$:} Note that by \eqref{XiPsi}, $\cW_\Psi$ is exactly the union of $Z(\xi_\Psi)$ and the complex lines $\{\alpha\}\times\mathbb C$, where $\alpha\in Z(q)$ is such that $m_\alpha$ as in \eqref{XiPsi} is non-zero. Since the lines do not intersect $\mathbb D^2$, it is trivial that $\cW_\Psi$ is a distinguished variety if and only if $Z(\xi_\Psi)$ is so.

{\em Proof of $(iii)\Leftrightarrow(iv)$:} The implication $(iii)\Rightarrow(iv)$ is obvious. For the other direction, we invoke the following result of Knese.
\begin{proposition}[Proposition 4.1 in Knese \cite{Knese-TAMS2010}]
An algebraic variety $\cW$ each of whose irreducible components intersects $\mathbb D^2$ is distinguished with respect to $\mathbb D^2$ if and only if
$$
\cW\subset \mathbb D^2\cup\mathbb T^2\cup\mathbb E^2.
$$
\end{proposition}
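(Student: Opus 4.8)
The plan is to treat the two implications separately: the inclusion $\cW\subseteq\mathbb D^2\cup\mathbb T^2\cup\mathbb E^2$ makes distinguishedness immediate, while the converse will rest on the Agler--McCarthy representation together with a reflection (``inner--toral'') argument. If $\cW\subseteq\mathbb D^2\cup\mathbb T^2\cup\mathbb E^2$, then any point of $\cW\cap\partial\mathbb D^2$ has both coordinates of modulus at most $1$ and at least one of modulus exactly $1$, so it lies neither in $\mathbb D^2$ nor in $\mathbb E^2$ and hence lies in $\mathbb T^2$; thus $\cW\cap\partial\mathbb D^2=\cW\cap\mathbb T^2$, and since each component meets $\mathbb D^2$ we get $\cW\cap\mathbb D^2\neq\emptyset$, so $\cW$ is distinguished. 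For the converse it suffices to treat an irreducible $\cW$: if $\cV$ is an irreducible component of a distinguished $\cW$ all of whose components meet $\mathbb D^2$, then $\cV\cap\partial\mathbb D^2\subseteq\cW\cap\partial\mathbb D^2=\cW\cap\mathbb T^2$ forces $\cV\cap\partial\mathbb D^2=\cV\cap\mathbb T^2$, so $\cV$ is distinguished, and $\cW\subseteq\mathbb D^2\cup\mathbb T^2\cup\mathbb E^2$ iff every $\cV$ is. Discarding the trivial cases $\cW=\mathbb C^2$ and $\dim\cW=0$, I may assume $\cW=Z(p)$ with $p$ irreducible, $\cW$ distinguished and $\cW\cap\mathbb D^2\neq\emptyset$.

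The engine is a rational inner function attached to $\cW$. Distinguishedness says precisely that $p(z_1,\cdot)$ has no zero on $\mathbb T$ for any $z_1\in\mathbb D$ (and symmetrically in $z_2$). Hence, by the argument principle, the number of zeros of $p(z_1,\cdot)$ in $\mathbb D$ (with multiplicity) is a continuous, hence constant, function of $z_1\in\mathbb D$, say $k$, and $k\ge 1$ because $\cW\cap\mathbb D^2\neq\emptyset$; the elementary symmetric functions of these zeros are holomorphic on $\mathbb D$ and, since the zeros never cross $\mathbb T$ and --- again by distinguishedness --- all their limit points as $z_1\to\mathbb T$ lie on $\mathbb T$, they extend continuously to $\overline{\mathbb D}$, producing a monic degree-$k$ polynomial $Q(z_1,w)$ whose zeros lie in $\mathbb D$ for $z_1\in\mathbb D$ and on $\mathbb T$ for $z_1\in\mathbb T$. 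By the Agler--McCarthy realisation (the ``conversely'' part of Theorem~\ref{Sec-AMDVTheorem}, in its minimal form), $Q$ is realised as $Q(z_1,w)=\det\bigl(wI-\Psi(z_1)\bigr)$ for a $k\times k$ rational inner $\Psi$, so that $\cW_\Psi\cap\mathbb D^2=Z(\xi_\Psi)\cap\mathbb D^2=\cW\cap\mathbb D^2$ and, by construction, $\operatorname{spec}\Psi(z_1)=\{z_2\in\mathbb D:(z_1,z_2)\in\cW\}\subseteq\mathbb D$ for every $z_1\in\mathbb D$ (in particular $\cW_\Psi$ is itself distinguished and $\nu(\Psi)<1$ on $\mathbb D$). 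Since $\cW$ is irreducible, $\cW\cap\mathbb D^2$ is Zariski dense in $\cW$, so $\cW\subseteq Z(\xi_\Psi)$; it therefore suffices to prove the function-theoretic statement $Z(\xi_\Psi)\subseteq\mathbb D^2\cup\mathbb T^2\cup\mathbb E^2$.

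For that, recall that the poles of $\Psi$ lie in $\mathbb E$ and that, away from them, $(z_1,z_2)\in Z(\xi_\Psi)$ iff $z_2\in\operatorname{spec}\Psi(z_1)$. If $z_1\in\mathbb D$ then $\operatorname{spec}\Psi(z_1)\subseteq\mathbb D$ by construction; if $z_1\in\mathbb T$ then $\Psi(z_1)$ is unitary, so $\operatorname{spec}\Psi(z_1)\subseteq\mathbb T$; if $z_1\in\mathbb E$, introduce the reflected function $\widetilde\Psi(z):=\Psi(1/\bar z)^{*}$, which is holomorphic in $z$, equals $\Psi(z)^{-1}$ (by unitarity on $\mathbb T$ and analytic continuation), has no pole on $\overline{\mathbb E}$, and satisfies $\operatorname{spec}\widetilde\Psi(z)=\overline{\operatorname{spec}\Psi(1/\bar z)}\subseteq\mathbb D$ since $1/\bar z\in\mathbb D$; hence $\operatorname{spec}\Psi(z_1)=\bigl(\operatorname{spec}\widetilde\Psi(z_1)\bigr)^{-1}\subseteq\mathbb E$. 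Finally, over a pole $\alpha\in\mathbb E$ the set $Z(\xi_\Psi)\cap(\{\alpha\}\times\mathbb C)$ is finite (as $\xi_\Psi$ has no factor $z_1-\alpha$), and each of its points $(\alpha,z_2^{0})$ is a finite limit of eigenvalues of $\Psi(z_1)$ as $z_1\to\alpha$ through $\mathbb E$; since $\Psi(z)^{-1}=\widetilde\Psi(z)\to\widetilde\Psi(\alpha)=\Psi(1/\bar\alpha)^{*}$ with $1/\bar\alpha\in\mathbb D$, the eigenvalues of $\widetilde\Psi(\alpha)$ lie in $\mathbb D$, so $z_2^{0}$, being the reciprocal of a \emph{nonzero} such eigenvalue, lies in $\mathbb E$. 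Thus $Z(\xi_\Psi)\subseteq\mathbb D^2\cup\mathbb T^2\cup\mathbb E^2$, and with $\cW\subseteq Z(\xi_\Psi)$ the Proposition follows.

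The main obstacle I anticipate is the bookkeeping at the poles of $\Psi$: one must show that the finitely many points of $Z(\xi_\Psi)$ lying over a pole land in $\mathbb E^2$ and cannot escape onto $\mathbb E\times\mathbb T$, and this seems to need precisely the reflection identity $\widetilde\Psi(\alpha)=\Psi(1/\bar\alpha)^{*}$ with $1/\bar\alpha$ safely inside $\mathbb D$ --- a direct continuity estimate at $\alpha$ would be circular. A secondary technical point is to ensure that the $Q$ extracted from $\cW$ is genuinely realised by a \emph{rational} inner $\Psi$ (upgrading the mere continuity of the symmetric functions across $\mathbb T$ to the analyticity built into ``rational''), which is the substance of the Agler--McCarthy realisation invoked above. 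One should also note that the equivalence $(iii)\Leftrightarrow(iv)$ in Theorem~\ref{Sec-AMDVTheorem} is a special case of this Proposition and must be deduced from it rather than used in its proof.
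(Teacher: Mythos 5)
This proposition is not proved in the paper at all: it is quoted verbatim as Proposition~4.1 of Knese \cite{Knese-TAMS2010} and used as a black box in the proof of Theorem~\ref{Sec-AMDVTheorem}, so there is no ``paper proof'' to compare against. Judging your attempt on its own merits: the easy direction and the reduction to an irreducible $\cW$ are fine, and in the hard direction the overall strategy --- realize $\cW\cap\mathbb D^2$ by a rational inner $\Psi$, pass to $\cW\subseteq Z(\xi_\Psi)$ by Zariski density, then locate $\operatorname{spec}\Psi(z_1)$ in $\mathbb D$, $\mathbb T$, $\mathbb E$ via the reflection $\widetilde\Psi(z)=\Psi(1/\bar z)^*$ --- is legitimate, and the reflection and pole bookkeeping are carried out carefully and correctly \emph{assuming} $\operatorname{spec}\Psi(z_1)\subseteq\mathbb D$ for every $z_1\in\mathbb D$.

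That assumption is where the genuine gap lies. You justify it ``by construction,'' after asserting that the Agler--McCarthy realization recovers exactly your monic degree-$k$ polynomial $Q$ as $\det\bigl(wI-\Psi(z_1)\bigr)$ for a $k\times k$ rational inner $\Psi$. But the converse part of Theorem~\ref{Sec-AMDVTheorem} (Agler--McCarthy, Theorem~1.12 of \cite{AM}) only yields \emph{some} rational inner $\Psi$, of unspecified size, with $\cW\cap\mathbb D^2=\cW_\Psi\cap\mathbb D^2$; it neither bounds the size by $k$ nor forbids $\det(\Psi(z_1)-z_2I)$ from having roots on $\mathbb T$ when $z_1\in\mathbb D$. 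The minimal-degree realization you are implicitly appealing to is Theorem~2.1 of \emph{Knese's own paper} \cite{Knese-TAMS2010}, so invoking it here flirts with circularity. Your interpolating construction of $Q$ also needs repair: having all limit points on $\mathbb T$ does not by itself give a continuous boundary extension of the (bounded, holomorphic) symmetric functions, and the subsequent leap from continuity to rationality is precisely the substance being outsourced to that nonexistent ``minimal AM statement.'' The quickest fix abandons $Q$ and minimality altogether: Agler--McCarthy's $\Psi$ is \emph{pure}, i.e.\ $\|\Psi(0)v\|<\|v\|$ for $v\neq 0$, and from purity one gets $\operatorname{spec}\Psi(z_1)\subseteq\mathbb D$ for $z_1\in\mathbb D$ directly --- if $\Psi(z_1)v=w_0v$ with $\|v\|=1$ and $|w_0|=1$, then $z\mapsto\langle\Psi(z)v,v\rangle$ is an $H^\infty$-unit-ball function with modulus $1$ at an interior point, hence constant $\equiv w_0$, and Cauchy--Schwarz equality forces $\Psi(z)v=w_0v$ for all $z$, so $\|\Psi(0)v\|=1$, contradicting purity. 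Once that is in place, your reflection argument closes the proof without any minimality claim.
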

To show that each irreducible component of $Z(\xi_\Psi)$ intersects $\mathbb D^2$, we assume with no loss of generality that $\xi_\Psi$ is irreducible. Let us view $\xi_\Psi(z_1,z_2)$ as a polynomial in $z_2$ for each fixed $z_1$. There exists at least one $\alpha\in\mathbb D$, for which $\xi_\Psi(\alpha,z)$ is a non-constant polynomial in $z$ because otherwise $\xi_\Psi(z_1,z_2)$ would be a polynomial in $z_1$ only and that would violate the distinguishedness of $Z(\xi_\Psi)$. Let $\beta$ be a root of $\xi_\Psi(\alpha,z)$. Then $(\alpha,\beta)\in\cW_\Psi$. This implies that $\beta$ is an eigenvalue of $\Psi(\alpha)$. Since $(iv)\Rightarrow(ii)$ is already established and $\alpha$ is in $\mathbb D$, we use (ii) to conclude that $\beta\in\mathbb D$ as well. This completes the proof of the forward direction of the theorem.

    Conversely, if $\cW$ is a distinguished variety with respect to $\mathbb{D}^2$, then by  \cite[Theorem 1.12]{AM}, there exists a matrix-valued rational inner function $\Psi$ on $\mathbb D$ such that $\cW\cap\mathbb D^2=\cW_\Psi\cap\mathbb D^2$.

For the moreover part, suppose first that $\cW=Z(\xi_\Psi)$. Since $\cW$ is a distinguished variety, by the equivalence we saw above, $Z(\xi_\Psi)\subset\mathbb D^2\cup\mathbb T^2\cup\mathbb E^2$. Conversely, let both $\cW$ and  $Z(\xi_{\Psi})$ be contained in $\mathbb D^2\cup\mathbb T^2\cup\mathbb E^2$. Now, we need an idea of Knese.

\begin{definition}
A polynomial $\xi$ with highest powers $m_1$ and $m_2$ of $z_1$ and $z_2$ respectively, is called {\em essentially $\mathbb{T}^2$-symmetric} if
$$\xi(z_1, z_2)=cz_1^{m_1}z_2^{m_2}\overline{\xi(\frac{1}{\overline{z_1}}, \frac{1}{\overline{z_2}})}$$
for some unimodular constant $c$.
\end{definition}

We state a proposition from Knese \cite{Knese-TAMS2010}.

\begin{proposition}(Proposition 4.3 of Knese \cite{Knese-TAMS2010})
A polynomial $\xi$ is essentially $\mathbb{T}^2$-symmetric if $Z(\xi)\subset\mathbb D^2\cup\mathbb T^2\cup\mathbb E^2$.

\end{proposition}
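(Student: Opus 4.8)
The plan is to study the natural involution on polynomials that reflects the zero set across $\mathbb T^2$, and to show that under the stated hypothesis $\xi$ is (essentially) a fixed point of this involution. For a nonzero $\xi\in\mathbb C[z_1,z_2]$ with $\deg_{z_1}\xi=m_1$, $\deg_{z_2}\xi=m_2$, set
\[
\widetilde\xi(z_1,z_2):=z_1^{m_1}z_2^{m_2}\,\overline{\xi\!\left(1/\bar z_1,\,1/\bar z_2\right)},
\]
which, written out on coefficients, is again a polynomial with $\deg_{z_i}\widetilde\xi\le m_i$. First I would record three elementary facts. (a) On $\mathbb T^2$ one has $1/\bar z_i=z_i$, so $\widetilde\xi=z_1^{m_1}z_2^{m_2}\overline{\xi}$ there; hence $|\widetilde\xi|=|\xi|$ on $\mathbb T^2$ and $Z(\widetilde\xi)\cap\mathbb T^2=Z(\xi)\cap\mathbb T^2$. (b) If $z_1\nmid\xi$ and $z_2\nmid\xi$, then in fact $\deg_{z_i}\widetilde\xi=m_i$, $\widetilde{\widetilde\xi}=\xi$, and $\xi\mapsto\widetilde\xi$ is multiplicative on the set of polynomials none of whose irreducible factors is $z_1$ or $z_2$. (c) $\xi$ is essentially $\mathbb T^2$-symmetric precisely when $\widetilde\xi=c\,\xi$ for some constant $c$; and such a $c$ is automatically unimodular by (a), since $\xi\not\equiv0$ on $\mathbb T^2$ (a nonzero polynomial cannot vanish identically on $\mathbb T^2$; note also that $\xi\equiv0$ is incompatible with the hypothesis). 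By unique factorization, write $\xi=\gamma\prod_i\xi_i^{k_i}$ with the $\xi_i$ distinct irreducibles; since $Z(\xi_i)\subseteq Z(\xi)$ for each $i$, facts (b) and (c) reduce the problem to proving the statement for an irreducible $\xi$, and then reassembling.

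So assume $\xi$ is irreducible with $Z(\xi)\subseteq\mathbb D^2\cup\mathbb T^2\cup\mathbb E^2$. I would first note that $\xi\notin\mathbb C[z_1]$, $\xi\notin\mathbb C[z_2]$, $z_1\nmid\xi$ and $z_2\nmid\xi$: otherwise $\xi$ would be a scalar multiple of $z_1-a$, or of $z_1$, or the analogue in $z_2$, so $Z(\xi)$ would contain a whole complex line $\{a\}\times\mathbb C$, which is \emph{not} contained in $\mathbb D^2\cup\mathbb T^2\cup\mathbb E^2$ (evaluate the free coordinate at $0$ and at $2$ to get a contradiction). In particular $m_2=\deg_{z_2}\xi\ge1$, so the coefficient $c_{m_2}(z_1)$ of $z_2^{m_2}$ in $\xi$ is a nonzero polynomial in $z_1$. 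The conceptual heart of the argument is the claim that $Z(\xi)\cap\mathbb T^2$ is infinite: for each of the infinitely many $z_1\in\mathbb T$ with $c_{m_2}(z_1)\ne0$, the one-variable polynomial $\xi(z_1,\cdot)$ has degree exactly $m_2\ge1$, hence a root $z_2$; since $|z_1|=1$, the point $(z_1,z_2)\in Z(\xi)$ can only lie in the $\mathbb T^2$-piece of $\mathbb D^2\cup\mathbb T^2\cup\mathbb E^2$, so $|z_2|=1$. Distinct $z_1$'s give distinct points, so $Z(\xi)\cap\mathbb T^2$ is infinite.

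To finish the irreducible case: by fact (a), $\widetilde\xi$ vanishes on the infinite set $Z(\xi)\cap\mathbb T^2$, which is contained in the irreducible curve $Z(\xi)$; by B\'ezout's theorem (two plane curves with no common component meet in finitely many points) and irreducibility of $\xi$, this forces $\xi\mid\widetilde\xi$. Comparing degrees in each variable, $\deg_{z_i}\widetilde\xi=m_i=\deg_{z_i}\xi$ by fact (b), so the quotient $\widetilde\xi/\xi$ is a constant $c$, and $|c|=1$ by fact (a); hence $\widetilde\xi=c\,\xi$ and $\xi$ is essentially $\mathbb T^2$-symmetric by (c). For a general $\xi=\gamma\prod_i\xi_i^{k_i}$, each irreducible $\xi_i$ has no $z_1$- or $z_2$-factor (as above, since $Z(\xi_i)\subseteq Z(\xi)$), so by multiplicativity $\widetilde\xi=\bar\gamma\prod_i\widetilde{\xi_i}^{\,k_i}=\bar\gamma\prod_i(c_i\xi_i)^{k_i}=\big(\bar\gamma\gamma^{-1}\textstyle\prod_i c_i^{k_i}\big)\xi$, a unimodular multiple of $\xi$, which completes the proof. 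The steps I expect to need the most care are the degree bookkeeping in fact (b) — in particular, ensuring $\widetilde\xi$ has the \emph{full} degree $m_i$ in each variable, so that $\widetilde\xi/\xi$ is a genuine constant and not a Laurent monomial — and the compatibility of the reflection with factorization; both rest on the small but essential observation that the geometric hypothesis forbids any one-variable factor of $\xi$. The place where the hypothesis is genuinely exploited, and the real crux, is the infinitude of $Z(\xi)\cap\mathbb T^2$, which is exactly what makes B\'ezout applicable.
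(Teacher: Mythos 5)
The paper does not supply a proof here: the statement is imported verbatim from Knese (Proposition~4.3 of \cite{Knese-TAMS2010}) and used as a black box, so there is no in-paper argument to compare against. Your proof is correct and, as far as I can tell, follows the same strategy one finds in Knese's paper and in the folklore surrounding distinguished varieties: work with the reflection $\widetilde\xi(z_1,z_2)=z_1^{m_1}z_2^{m_2}\overline{\xi(1/\bar z_1,1/\bar z_2)}$, observe that the geometric hypothesis rules out one-variable and monomial factors (so the degree bookkeeping in your fact~(b) goes through cleanly), show that $Z(\xi)\cap\mathbb T^2$ is infinite by sweeping $z_1$ over $\mathbb T$, and then use the affine B\'ezout finiteness of $Z(\xi)\cap Z(\widetilde\xi)$ for coprime factors to force $\xi\mid\widetilde\xi$ on irreducible components, with the degree comparison producing the unimodular constant. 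The reduction to the irreducible case via multiplicativity of $\xi\mapsto\widetilde\xi$ is the right way to handle general $\xi$, and you correctly observe that each irreducible factor inherits the containment hypothesis. The only spot worth spelling out a bit more when you write this up is the appeal to B\'ezout: you want the affine statement that two curves in $\mathbb C^2$ without a common component have finite intersection, which follows either from projective B\'ezout together with the projection, or from the elementary resultant/Gauss's lemma argument showing that coprimality in $\mathbb C[z_1,z_2]$ persists in $\mathbb C(z_1)[z_2]$; either is fine, but the sentence as written leans on it silently. Everything else, including the unimodularity of the constant via $|\widetilde\xi|=|\xi|$ on $\mathbb T^2$ and the observation that a nonzero polynomial cannot vanish identically on $\mathbb T^2$, is exactly right.
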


In view of this proposition, both $\cW$ and $Z(\xi_{\Psi})$ have the symmetry property:

\begin{definition}
A variety $\cW$ is said to be {\em symmetric} if for any $z_1\neq 0$ and $z_2\neq 0$, we have
\begin{equation}\label{symm}
(z_1, z_2)\in\cW \text { if and only if } (\frac{1}{\overline{z_1}}, \frac{1}{\overline{z_2}})\in\cW.
\end{equation}
\end{definition}

Thus, $\cW\cap\mathbb{E}^2$ is determined by $\cW\cap\mathbb{D}^2$ and the same holds for  $Z(\xi_\Psi)$. So  $\cW\cap(\mathbb{D}^2\cup\mathbb{E}^2)=  Z(\xi_\Psi)\cap(\mathbb{D}^2\cup\mathbb{E}^2)$. Now, note that every irreducible component of $\cW$ meets the bidisc and the same is true for  $Z(\xi_\Psi)$. So, for a point $(z_1, z_2)\in\mathbb{T}^2\cup\cW$, an approximation argument shows that $(z_1, z_2)\in\mathbb{T}^2\cup Z(\xi_\Psi)$ and vice versa. This completes the proof of the theorem.

\end{proof}

\section{Through the linear pencils}\label{S:LinPencil}

\subsection{The joint spetrum}

The beginning of this section warrants a discussion of the joint spectrum.
The joint spectrum $\sigma_T(\underline{T})$ of a commuting tuple ${\underline{T}}=(T_1,T_2, \ldots ,T_d)$ of operators was defined by Taylor in \cite{Taylor} using a Koszul complex. When the Hilbert space is of finite-dimension, say $n$, the joint spectrum is particularly simple to describe because a set of commuting matrices can be simultaneously upper-triangularized by a unitary matrix. This means that one can choose an orthonormal basis of the Hilbert space with respect to which the linear transformations $T_i$ are of the form
  \[
    \begin{bmatrix}
       \lambda_{1}^{(i)}& & & *  \\
       & \lambda_{2}^{(i)} & & &  \\
       & & \ddots & & \\
       0 & & & \lambda_{n}^{(i)}\\
    \end{bmatrix}.
\]
          The joint spectrum $\sigma_T(T)$ then consists of $\{(\lambda_{j}^{(1)},\lambda_{j}^{(2)}, \ldots ,\lambda_{j}^{(d)}):j=1,2, \ldots ,n\}$. Indeed, every $(\lambda_{j}^{(1)},\lambda_{j}^{(2)}, \ldots ,\lambda_{j}^{(d)})$ is a {\em joint eigenvalue}, i.e., there is a common non-zero eigenvector $x_j$ that satisfies $T_i x_j = \lambda_{j}^{(i)}x_j$ for $i=1,2, \ldots , d$. This is analogous to the fact that the spectrum of a linear transformation on a finite dimensional space consists of just eigenvalues.

         We start with a preparatory lemma on joint spectra.

\begin{lemma} \label{jemultipliers}
If $\mathcal F$ is a finite dimensional Hilbert space and if $\Phi$ and $\Psi$ are two $B(\mathcal{F})$-valued bounded holomorphic functions on $\mathbb{D}$ satisfying $\Phi(z)\Psi(z)=zI_{\mathcal{F}}=\Psi(z)\Phi(z)$ for every $z\in{\mathbb{D}}$, then $(\overline{z_1},\overline{z_2})\in{\mathbb{D}^2}$ is a joint eigenvalue of $(M_{\Phi}^*, M_{\Psi}^*)$ if and only if $(z_1, z_2)$ is a joint eigenvalue of $(\Phi(z_1z_2), \Psi(z_1z_2))$.
\end{lemma}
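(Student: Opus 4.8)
The plan is to reduce the statement to a purely finite-dimensional fact about commuting matrices, the passage between $H^2(\mathcal F)$ and $\mathcal F$ being handled by the reproducing-kernel machinery. First I would record an elementary lemma in linear algebra: if $(A_1,\dots,A_d)$ is a commuting tuple of operators on a finite dimensional Hilbert space, then $(\lambda_1,\dots,\lambda_d)$ is a joint eigenvalue of $(A_1,\dots,A_d)$ if and only if $(\overline{\lambda_1},\dots,\overline{\lambda_d})$ is a joint eigenvalue of $(A_1^*,\dots,A_d^*)$. This follows from the simultaneous unitary upper-triangularization recalled at the beginning of this section: in a basis triangularizing all the $A_k$ at once, the joint eigenvalues are exactly the $d$-tuples of diagonal entries; reversing the order of that basis turns each $A_k$ into a lower triangular matrix, hence each $A_k^*$ into an upper triangular matrix whose diagonal is the complex conjugate of that of $A_k$, and the claim drops out. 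Since $\Phi(w)\Psi(w)=wI_{\mathcal F}=\Psi(w)\Phi(w)$, the pair $(\Phi(w),\Psi(w))$ commutes for every $w\in\mathbb D$, so this lemma already identifies ``$(z_1,z_2)$ is a joint eigenvalue of $(\Phi(z_1z_2),\Psi(z_1z_2))$'' with ``$(\overline{z_1},\overline{z_2})$ is a joint eigenvalue of $(\Phi(z_1z_2)^*,\Psi(z_1z_2)^*)$''; it then remains to connect the latter with $(M_\Phi^*,M_\Psi^*)$.

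For that I would use two standard facts about $H^2(\mathcal F)$. First, because $\Phi\Psi=\Psi\Phi=zI_{\mathcal F}$, the corresponding multiplication operators satisfy $M_\Phi M_\Psi=M_\Psi M_\Phi=M_z\otimes I_{\mathcal F}=:S$, the shift on $H^2(\mathcal F)$, whence $M_\Phi^*M_\Psi^*=M_\Psi^*M_\Phi^*=S^*$. Second, writing $k_w(z)=(1-\bar w z)^{-1}$ for $w\in\mathbb D$, one has $M_\Phi^*(k_w\otimes v)=k_w\otimes\Phi(w)^*v$ and $M_\Psi^*(k_w\otimes v)=k_w\otimes\Psi(w)^*v$ for all $v\in\mathcal F$, and the eigenvectors of $S^*$ corresponding to an eigenvalue $\bar w$ (with $w\in\mathbb D$) are precisely the nonzero vectors of the form $k_w\otimes v$. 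The hypothesis $(\overline{z_1},\overline{z_2})\in\mathbb D^2$ guarantees $z_1z_2\in\mathbb D$, so all of these objects are legitimate.

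The two implications are then short. If $(\overline{z_1},\overline{z_2})$ is a joint eigenvalue of $(M_\Phi^*,M_\Psi^*)$ with common eigenvector $f\neq0$, then $S^*f=M_\Phi^*M_\Psi^*f=\overline{z_1z_2}\,f$, so $f=k_{z_1z_2}\otimes v$ for some $v\neq0$; substituting into $M_\Phi^*f=\overline{z_1}f$ and $M_\Psi^*f=\overline{z_2}f$ and reading off the kernel-section formulas gives $\Phi(z_1z_2)^*v=\overline{z_1}v$ and $\Psi(z_1z_2)^*v=\overline{z_2}v$, and the linear-algebra lemma converts this into $(z_1,z_2)$ being a joint eigenvalue of $(\Phi(z_1z_2),\Psi(z_1z_2))$. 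Conversely, if $(z_1,z_2)$ is a joint eigenvalue of $(\Phi(w),\Psi(w))$ with $w=z_1z_2$, the lemma yields $v\neq0$ with $\Phi(w)^*v=\overline{z_1}v$ and $\Psi(w)^*v=\overline{z_2}v$; then $f:=k_w\otimes v$ is a nonzero vector of $H^2(\mathcal F)$ satisfying $M_\Phi^*f=\overline{z_1}f$ and $M_\Psi^*f=\overline{z_2}f$, as needed.

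I expect the only point carrying any real content to be the linear-algebra lemma, i.e., the passage from a common eigenvector of the commuting pair $(\Phi(w),\Psi(w))$ to a common eigenvector of the adjoint pair $(\Phi(w)^*,\Psi(w)^*)$. This step genuinely uses finite-dimensionality (through simultaneous triangularization) and would fail for general commuting operators; everything else is routine bookkeeping with reproducing kernels.
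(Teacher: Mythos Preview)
Your argument is correct and essentially the same as the paper's: both directions proceed via the kernel-function identity $M_\Phi^*(k_w\otimes v)=k_w\otimes\Phi(w)^*v$ together with the observation that any joint eigenvector of $(M_\Phi^*,M_\Psi^*)$ is an eigenvector of $S^*=M_z^*$ and hence of the form $k_{z_1z_2}\otimes v$. The only cosmetic differences are that the paper derives the eigenvector structure of $S^*$ by an explicit power-series computation rather than quoting it, and glosses over the passage from joint eigenvalues of $(\Phi(w),\Psi(w))$ to those of the adjoint pair with ``it is easy to see,'' whereas you spell it out via simultaneous triangularization.
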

\begin{proof}
 Let $(z_1, z_2)\in{\mathbb{D}^2}$ be a joint eigenvalue of $(\Phi(z_1z_2), \Psi(z_1z_2))$. It is easy to see that $(\overline{z_1}, \overline{z_2})$ is a joint eigenvalue of $(\Phi(z_1z_2)^*, \Psi(z_1z_2)^*)$. Let $w$ be a joint eigenvector for $(\Phi(z_1z_2)^*, \Psi(z_1z_2)^*)$ corresponding to the joint eigenvalue $(\overline{z_1}, \overline{z_2})$. Then
$$M_{\Phi}^*(k_{z_1z_2}\otimes w)=k_{z_1z_2}\otimes\Phi(z_1z_2)^*w=k_{z_1z_2}\otimes\overline{z_1}w =\overline{z_1}(k_{z_1z_2}\otimes w).$$

 Similarly, $M_{\Psi}^*(k_{z_1z_2}\otimes w)=\overline{z_2}(k_{z_1z_2}\otimes w)$. Thus,
$(\overline{z_1},\overline{z_2})$ is a joint eigenvalue of $(M_{\Phi}^*, M_{\Psi}^*)$.

If $(\overline{z_1},\overline{z_2})\in{\mathbb{D}^2}$ is a joint eigenvalue of $(M_{\Phi}^*, M_{\Psi}^*)$, let $f(z)=\sum_{n=0}^{\infty}\zeta_n z^n\in{H^2(\mathcal{F})}$ be a joint eigenvector. For any $g\in{H^2(\mathcal{F})}$, we have
$$\overline{z_1} \overline{z_2}\langle f, g\rangle = \langle \overline{z_1} \overline{z_2} f, g\rangle = \langle M_{\Phi}^*M_{\Psi}^* f, g\rangle
= \langle (M_{\Phi}M_{\Psi})^*f, g\rangle
=\langle M_z^*f, g \rangle
=\langle f, M_zg\rangle.$$
 Taking $g(z)=z^{n-1}\zeta$ for $n=1,2, \ldots, $ we have
 $$\overline{z_1} \overline{z_2}\langle \zeta_{n - 1}, \zeta\rangle=\langle \zeta_n, \zeta\rangle.$$
Since  $\zeta$ is arbitrary, we have $\overline{z_1}\overline{z_2}\zeta_{n-1}=\zeta_n$ for $n=1,2, \ldots $. So, $\zeta_n= (\overline{z_1}\overline{z_2})^n\zeta_{0} $ for $n= 1, 2, \ldots$.
 So, $f(z)=k_{z_1z_2}\otimes \zeta_{0}$. So,
 \begin{align}\label{Multi_equ}
 M_{\Phi}^* f= M_{\Phi}^*(k_{z_1z_2}\otimes\zeta_{0})=k_{z_1z_2}\otimes\Phi(z_1z_2)^*\zeta_{0}.
 \end{align}
  So, $k_{z_1z_2}\otimes \Phi(z_1z_2)^*\zeta_{0}=k_{z_1z_2}\otimes\overline{z}_1\zeta_{0}$. Equating the constant terms on both sides, we get
  $$\Phi(z_1z_2)^*\zeta_{0}=\overline{z_1}\zeta_{0}.$$
  Similarly, using $\Psi$ in place of $\Phi$ in \eqref{Multi_equ}, we have $\Psi(z_1z_2)^*\zeta_{0}=\overline{z_2}\zeta_{0}$. Thus, $(\overline{z_1}, \overline{z_2})$ is a joint eigenvalue of the pair of commuting matrices $(\Phi(z_1z_2)^*, \Psi(z_1z_2)^*)$ which in turn implies that $(z_1, z_2)$ is a joint eigenvalue of $(\Phi(z_1z_2), \Psi(z_1z_2))$.

\end{proof}

Given a finite model triple $(\cF,P,U)$, the BCL functions $P^\perp U+zPU$ and $U^*P + zU^*P^\perp$ satisfy the required condition above, i.e.,
$$(P^\perp U+zPU) (U^*P + zU^*P^\perp) = zI_{\mathcal F}=(U^*P + zU^*P^\perp)(P^\perp U+zPU)$$ for all $z$ in $\mathbb D$.

This has an interesting consequence: $(z_1, z_2) \in \sigma_T(P^\perp U+z PU,U^*P+z U^*P^\perp)$ implies that $z = z_1z_2$.

\subsection{The Main Theorem}\label{SS:TheMainThm}
At the end of the last section, we saw the importance of the symmetry property. In view of Knese's work \cite[Proposition 4.1]{Knese-TAMS2010}, a symmetric distinguished variety $\cW$ all whose components intersect the bidisc is determined by $\cW \cap \mathbb D^2$. Thus, it is natural to ask for a description of a distinguished variety which is automatically symmetric.

This consideration prompts us to the main theorem.

\begin{thm}\label{ourDV}
For an orthogonal projection $P$ and a unitary $U$ acting on a finite dimensional Hilbert space, the set
$$
\cW_{P,U}:=\{(z_1,z_2)\in\mathbb{C}^2:(z_1,z_2)\in\sigma_T(P^\perp U+z_1z_2PU,U^*P+z_1z_2U^*P^\perp)\}
$$
is a symmetric algebraic variety in $\mathbb C^2$ for which the following are equivalent:
\begin{itemize}
\item[(i)] $\cW_{P,U}$ is a distinguished variety with respect to $\mathbb D^2$;
\item[(ii)]  For all $z$ in the open unit disc $\mathbb D$,
\begin{equation} \label{StrangeConditions1}
 \nu(U^*(P + z P^\perp)) < 1  \text{ and } \nu((P^\perp + zP)U) < 1;
 \end{equation}
\item[(iii)] $\cW_{P,U}\subset \mathbb D^2\cup\mathbb T^2\cup\mathbb E^2$.
\end{itemize}Moreover, $\cW_{P,U}$ can be written as
\begin{equation}\label{Union}
\cW_{P,U}=\bigcup_{z\in\mathbb C}\sigma_T(P^\perp U+zPU,U^*P+zU^*P^\perp).
\end{equation}

Conversely, if $\cW$ is any distinguished variety with respect to $\mathbb D^2$, then there exist an orthogonal projection $P$ and a unitary $U$ acting on a finite dimensional Hilbert space such that
$$
\cW\cap\mathbb D^2=\cW_{P,U}\cap\mathbb D^2.
$$
Moreover, $\cW=\cW_{P,U}$ if and only if both $\cW$ and $\cW_{P,U}$ are contained in $\mathbb D^2\cup\mathbb T^2\cup\mathbb E^2$.
\end{thm}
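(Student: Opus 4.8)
The plan is to reduce everything to Theorem \ref{Sec-AMDVTheorem} by exhibiting $\cW_{P,U}$ as a $\xi_\Psi$-type variety for a suitable rational inner $\Psi$. First I would build the bridge: given the finite model triple $(\cF,P,U)$, set $\Phi(z)=P^\perp U+zPU$ and $\Psi_0(z)=U^*P+zU^*P^\perp$, so that by the computation recorded just before Section \ref{SS:TheMainThm} one has $\Phi(z)\Psi_0(z)=zI_\cF=\Psi_0(z)\Phi(z)$. Lemma \ref{jemultipliers} then applies and gives, for $(z_1,z_2)\in\mathbb D^2$, the equivalence: $(\overline{z_1},\overline{z_2})$ is a joint eigenvalue of $(M_\Phi^*,M_{\Psi_0}^*)$ iff $(z_1,z_2)$ is a joint eigenvalue of $(\Phi(z_1z_2),\Psi_0(z_1z_2))$ — i.e. iff $(z_1,z_2)\in\cW_{P,U}\cap\mathbb D^2$. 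Separately, I would set $\Psi(z):=(P^\perp+zP)U$; this is exactly a transfer function $A+zB(I-zD)^{-1}C$ with $D=0$ built from the block unitary determined by $P$ and $U$ (indeed $\Psi(z)=P^\perp U+zPU=\Phi(z)$ is already a polynomial of degree $\le 1$), hence rational inner. The identity $\det(\Phi(z_1)-z_2q(z_1)I)$ with $q\equiv 1$ shows $\cW_\Psi=\{(z_1,z_2):\det(\Phi(z_1)-z_2 I)=0\}$ and, crucially, $z_2$ is an eigenvalue of $\Phi(z_1)=\Phi(z_1\cdot 1)$. Comparing with the defining condition of $\cW_{P,U}$ and using that on a finite-dimensional space the joint spectrum of commuting matrices is read off from simultaneous upper-triangularization, I get $\cW_{P,U}\cap\mathbb D^2=\cW_\Psi\cap\mathbb D^2$, and since $q\equiv 1$ forces $m_\alpha=0$ for all $\alpha$, also $\xi_\Psi=\det(\Phi(z_1)-z_2 I)$ up to a nonzero constant, so $\cW_{P,U}=Z(\xi_\Psi)$.

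Granting this identification, the four-way equivalence is mostly inherited. Symmetry of $\cW_{P,U}$: this should be argued directly from the defining formula. If $(z_1,z_2)\in\cW_{P,U}$ with $z_1,z_2\ne 0$, there is a nonzero $x$ with $(P^\perp U+z_1z_2 PU)x=z_1 x$ and $(U^*P+z_1z_2 U^*P^\perp)x=z_2 x$; applying the two matrices in the opposite order, using $\Phi(w)\Psi_0(w)=wI$, and dividing by $z_1z_2$ one recovers the eigenvalue relations for $(1/\overline{z_1},1/\overline{z_2})$ after conjugating — here I would exploit that $\Phi(w)^*=(P^\perp+ \bar w P)U$ behaves like $\Psi_0$ evaluated at a reciprocal point, a short matrix manipulation. (Alternatively, symmetry follows once $\cW_{P,U}=Z(\xi_\Psi)$ is known, since a rational-inner-induced $\xi_\Psi$ is essentially $\mathbb T^2$-symmetric whenever the variety sits in $\mathbb D^2\cup\mathbb T^2\cup\mathbb E^2$; but I prefer the intrinsic argument so that symmetry holds unconditionally as claimed.) For (i)$\Leftrightarrow$(ii)$\Leftrightarrow$(iii): by Theorem \ref{Sec-AMDVTheorem} applied to this $\Psi=\Phi$, $Z(\xi_\Psi)$ is distinguished iff $\nu(\Psi(z))<1$ on $\mathbb D$ iff $Z(\xi_\Psi)\subset\mathbb D^2\cup\mathbb T^2\cup\mathbb E^2$. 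This already yields one half of \eqref{StrangeConditions1}, namely $\nu((P^\perp+zP)U)<1$. The second half, $\nu(U^*(P+zP^\perp))<1$, I would obtain by running the same argument with the roles swapped — i.e. observing that $\cW_{P,U}$ is equally well described using $\widetilde\Psi(z)=U^*P+zU^*P^\perp=U^*(P+zP^\perp)$ as the inner function governing the \emph{other} coordinate (here the symmetry of the variety, or simply the symmetric appearance of $\Phi$ and $\Psi_0$ in the definition of $\cW_{P,U}$, lets me interchange $z_1$ and $z_2$), so distinguishedness also forces $\nu(\widetilde\Psi(z))<1$. Conversely either numerical-radius bound alone, via the same Theorem \ref{Sec-AMDVTheorem}, gives distinguishedness, so (ii) is equivalent to each half separately and in particular to the conjunction.

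The formula \eqref{Union} I would prove by the same Lemma-\ref{jemultipliers} mechanism combined with finite-dimensionality: for fixed $z\in\mathbb C$, $\sigma_T(\Phi(z),\Psi_0(z))$ consists of pairs $(\lambda_1,\lambda_2)$ with a common eigenvector, and $\Phi(z)\Psi_0(z)=zI$ forces $\lambda_1\lambda_2=z$; conversely a point $(z_1,z_2)\in\cW_{P,U}$ lies in $\sigma_T(\Phi(z_1z_2),\Psi_0(z_1z_2))$ by definition, so the union over $z\in\mathbb C$ of these joint spectra is exactly $\cW_{P,U}$. For the converse direction and the ``moreover'': if $\cW$ is any distinguished variety, Theorem \ref{Sec-AMDVTheorem} produces a rational inner $\Psi_1$ with $\cW\cap\mathbb D^2=\cW_{\Psi_1}\cap\mathbb D^2$, and a BCL-type realization of $M_{\Psi_1}$ (this is where the Berger--Coburn--Lebow theorem enters — one passes from the transfer-function data of $\Psi_1$, or equivalently from the commuting isometric pair it generates on $H^2$, to a model triple $(\cF,P,U)$ with $\cF$ finite-dimensional since $\Psi_1$ is rational); then the forward direction gives $\cW_{P,U}\cap\mathbb D^2=\cW_{\Psi_1}\cap\mathbb D^2=\cW\cap\mathbb D^2$. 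Finally $\cW=\cW_{P,U}$ iff both lie in $\mathbb D^2\cup\mathbb T^2\cup\mathbb E^2$: one direction is (i)$\Rightarrow$(iii) above plus the fact that $\cW$ distinguished puts it (componentwise) in the tridomain by Knese; the other is the approximation/symmetry argument already used at the end of the proof of Theorem \ref{Sec-AMDVTheorem} — since both varieties are symmetric with all components meeting $\mathbb D^2$ and agree on $\mathbb D^2$, they agree on $\mathbb D^2\cup\mathbb E^2$ and hence, by taking boundary limits, on the $\mathbb T^2$ part as well.

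The main obstacle I anticipate is the clean extraction of a \emph{finite-dimensional} model triple $(\cF,P,U)$ from a given rational inner $\Psi_1$ in the converse direction — i.e. making the passage ``rational inner function $\rightsquigarrow$ BCL model triple'' precise with control on $\dim\cF$ — together with verifying that the two superficially different numerical-radius conditions in \eqref{StrangeConditions1} are genuinely both needed only for the ``converse'' packaging and are each individually equivalent to distinguishedness; care is required because $\nu(A)=\nu(A^*)$ but $\nu$ is not multiplicative, so the two bounds are not formally interchangeable without invoking the symmetry of $\cW_{P,U}$.
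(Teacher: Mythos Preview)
Your reduction to Theorem~\ref{Sec-AMDVTheorem} rests on the identification $\cW_{P,U}=Z(\xi_\Psi)$ with $\Psi(z)=(P^\perp+zP)U=\Phi(z)$, and this identification is false. In $\cW_\Psi$ one asks that $z_2$ be an eigenvalue of $\Phi(z_1)$, whereas in $\cW_{P,U}$ one asks that $(z_1,z_2)$ be a \emph{joint} eigenvalue of $(\Phi(z_1z_2),\Psi_0(z_1z_2))$; the evaluation points $z_1$ versus $z_1z_2$ do not match and joint-eigenvalue is strictly stronger than eigenvalue-in-one-factor. Concretely, take the paper's own example $P=\sbm{1&0\\0&0}$, $U=\sbm{0&1\\1&0}$: then $\Phi(w)=\sbm{0&w\\1&0}$ has eigenvalues $\pm\sqrt{w}$, so $\cW_\Phi=\{z_2^2=z_1\}$, while the paper computes $\cW_{P,U}=\{(z,z):z\in\mathbb C\}$. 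These disagree already inside $\mathbb D^2$ (e.g.\ $(1/4,1/2)\in\cW_\Phi\setminus\cW_{P,U}$), so neither your forward-direction equivalences nor your ``$\cW_{P,U}=Z(\xi_\Psi)$'' follow. The inner function that \emph{does} link back to $\cW_{P,U}$ is not the linear pencil $\Phi$ but the transfer-type function $\Psi_{P,U}(z)=P(I-zU^*P^\perp)^{-1}U^*P|_{\operatorname{Ran}P}$ introduced later in Section~\ref{AMandBCL}; for the example above this is the scalar function $z\mapsto z$, which correctly recovers the diagonal.

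The paper therefore does \emph{not} reduce Theorem~\ref{ourDV} to Theorem~\ref{Sec-AMDVTheorem}. For the forward direction it argues directly: $\cW_{P,U}$ is shown to be algebraic via the pencil family $\xi_{\lambda,\mu}(z_1,z_2)=\det[\lambda(\Phi(z_1z_2)-z_1I)+\mu(\Psi_0(z_1z_2)-z_2I)]$; symmetry comes from the matrix identity $((P^\perp+wP)U)^{*-1}=(P^\perp+\bar w^{-1}P)U$; $(i)\Rightarrow(ii)$ is a Cauchy--Schwarz argument turning $\nu=1$ into a unimodular eigenvalue and hence a boundary point with the other coordinate in $\mathbb D$; $(ii)\Rightarrow(iii)$ is a three-case split on $|z_1z_2|$; and $(iii)\Rightarrow(i)$ is the observation $\sigma_T(P^\perp U,U^*P)\subset\cW_{P,U}\cap\mathbb D^2$. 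For the converse the paper does not pass through an Agler--McCarthy inner function and then extract a BCL triple from it as you sketch; instead it builds $H^2(\mu)$ from Lemma~\ref{AMmeasure}, shows $\operatorname{Ran}(I-M_{z_1}M_{z_2}M_{z_2}^*M_{z_1}^*)$ is finite-dimensional by a direct polynomial argument, and applies the BCL theorem \emph{to the commuting isometric pair} $(M_{z_1},M_{z_2})$ itself, then invokes Lemma~\ref{jemultipliers}. Your ``BCL-type realization of $M_{\Psi_1}$'' is not the BCL theorem (which concerns commuting isometries, not a single multiplier), and making that step precise with finite-dimensional $\cF$ is exactly the content of Section~\ref{AMandBCL}, proved \emph{after} Theorem~\ref{ourDV}.
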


The condition \eqref{StrangeConditions1} will be referred to as the {\em compatibility condition} for $\mathbb D^2$. We analyze this condition in detail at the end of this section.

For a quick example, note that the finite model triple
\begin{align*} \left(\cF, \begin{bmatrix}
     1 & 0\\
     0& 0
    \end{bmatrix}, \begin{bmatrix}
     0 & 1\\
     1 & 0
    \end{bmatrix}\right)
  \end{align*}
  satisfies the compatibility condition \eqref{StrangeConditions1}, and the associated distinguished variety is $\{ (z,z) : z \in \mathbb C\}$.

An algebraic variety $\cW_{P,U}$ could agree with a distinguished variety inside $\mathbb D^2$ and nevertheless not be a distinguished variety. This happens for
$$
(\cF,P,U)=\left(\mathbb C^4, \begin{bmatrix} P_1&0\\0&P_1\end{bmatrix}, \begin{bmatrix} I_2&0\\0&E_{(12)}\end{bmatrix}\right),
$$
where $P_1=\sbm{1&0\\0&0}$ and $E_{(12)}=\sbm{0&1\\1&0}$. Then
$\cW_{P,U}=\{(z,1),(1,z),(z,z):z\in\mathbb C\}$  is clearly not a distinguished variety, but $\cW_{P,U}\cap\mathbb D^2 = \{(z,z):z\in\mathbb C\}\cap \mathbb D^2$ and $\{(z,z):z\in\mathbb C\}$ is a distinguished variety.

\subsection{Proof of the forward direction}We first prove that $\cW_{P,U}$ is an algebraic variety following a method of Pal \cite{Pal-Tetra}. To that end, consider the family of polynomials
 $$S:=\{\xi_{\lambda, \mu}(z_1,z_2) : \lambda,\mu\in\mathbb C \}$$
 where
   $$ \xi_{\lambda, \mu}(z_1,z_2) := \det\big[\lambda(P^{\perp}U+z_1z_2PU-z_1I)+\mu(U^*P+z_1z_2U^*P^{\perp}-z_2I) ].$$
 We shall prove that $\cW_{P,U}=Z(S)$. Let $(z_1,z_2)\in\cW_{P,U}$. Then there is a vector $h$ of norm $1$ such that
 	 $$(P^{\perp}U+z_1z_2PU)h=z_1h,$$
 	 $$(U^{*}P+z_1z_2U^*P^{\perp})h=z_2h.$$
 Thus for any $\lambda,\mu\in\mathbb C$, we have
 	$$ \big(\lambda(P^{\perp}U+z_1z_2PU-z_1I)+ \mu(U^{*}P+z_1z_2U^*P^{\perp}-z_2I)\big)h=0.$$
 So, $(z_1,z_2)\in{Z(S)}$. Now we prove that $Z(S)\subset\cW_{P,U}$. Let $(z_1,z_2)\in Z(S)$. This means that for any $\lambda,\mu\in\mathbb C$,
 $$\xi_{\lambda,\mu}(z_1,z_2)=\det\big[\lambda(P^{\perp}U+z_1z_2PU-z_1I)+\mu(U^*P+z_1z_2U^*P^{\perp}-z_2I)]=0.$$
 Thus there is a joint eigenvalue, say $(\alpha, \beta)$ (depending on $(z_1,z_2)$), of the commuting pair of matrices
 $$(P^{\perp}U+z_1z_2PU-z_1I , U^*P+z_1z_2U^*P^{\perp}-z_2I)$$
 such that $\lambda \alpha + \mu \beta = 0$ for infinitely many $\lambda$ and $\mu$ in $\mathbb C$. Therefore $(\alpha, \beta) = (0,0)$. This establishes the containment $Z(S)\subset\cW_{P,U}$ and consequently, $\cW_{P,U}=Z(S)$.

The form \eqref{Union} of the algebraic variety $\cW_{P,U}$ is easy to see. Indeed, the product of $P^\perp U+z_1z_2PU$ and $U^*P+z_1z_2U^*P^\perp$ is $z_1z_2I$ implying that any $(\alpha, \beta)$ in $\sigma_T(P^\perp U+z_1z_2PU,U^*P+z_1z_2U^*P^\perp)$ must satisfy $\alpha\beta = z_1z_2$ and hence
 \begin{align*}
 \cW_{P,U} & = \bigcup_{(z_1,z_2)\in\mathbb C^2}\sigma_T(P^\perp U+z_1z_2PU,U^*P+z_1z_2U^*P^\perp) \\
 & = \bigcup_{z\in\mathbb C}\sigma_T(P^\perp U+zPU,U^*P+zU^*P^\perp).
 \end{align*}

To prove symmetry, we use a basic property of joint spectra of matrices. Since, $(z_1, z_2) \in \cW_{P,U}$, we have
$(z_1, z_2)$ is a joint eigenvalue of  $(P^\perp U+z_1z_2PU,U^*P+z_1z_2U^*P^\perp)$. This happens
if and only if $(\frac{1}{\overline{z_1}}, \frac{1}{\overline{z_2}})$ is a joint eigenvalue of $((P^\perp U+z_1z_2PU)^{*^{-1}},(U^*P+z_1z_2U^*P^\perp)^{*^{-1}}) .$
  The proof is complete because
  $$((P^\perp U+z_1z_2PU)^{*^{-1}},(U^*P+z_1z_2U^*P^\perp)^{*^{-1}}) = (P^\perp U+\frac{1}{\overline{z_1z_2}}PU,U^*P+\frac{1}{\overline{z_1z_2}}U^*P^\perp).$$

We establish the equivalence of $(i), (ii)$ and $(iii)$ by showing that
$$
(i)\Rightarrow(ii)\Rightarrow(iii)\Rightarrow(i).
$$

{\em{Proof of $(i)\Rightarrow(ii)$:}} Suppose on the contrary that $\nu(P^\perp U+z_0PU) = 1 $ for some $z_0$ in $\mathbb D$. Then there is an $h_0$ such that $|\langle (P^\perp U+z_0PU)h_0, h_0 \rangle | = 1$ for some unit vector $h_0$. Since $P^\perp U+zPU$ is contractive for every $z \in \mathbb D$, this means, by the condition of equality in Cauchy-Schwarz Inequality, that $(P^\perp U+z_0PU)h_0 = \exp(i \theta)h_0$ for some $\theta$. Thus, $\exp(i \theta)$ is an eigenvalue of $P^\perp U+z_0PU$. So, there is some eigenvalue $\lambda$ of $U^*P+z_0U^*P^\perp$ such that $(\exp(i\theta) , \lambda)$ is a joint eigenvalue of $(P^\perp U+z_0PU, U^*P+z_0U^*P^\perp)$. Since the product of $P^\perp U+z_0PU$ and $U^*P+z_0U^*P^\perp$ is $z_0 I$, we have $\exp(i \theta) \lambda = z_0$. Thus $\lambda$ is in the open disc and this violates distinguishedness of $\cW_{P,U}$. The argument is similar with $U^*P+zU^*P^\perp$ instead of $P^\perp U+zPU$.

{\em{Proof of $(ii)\Rightarrow(iii)$:}} Let $(z_1, z_2) \in \cW_{P,U}$. We complete the proof by considering the following three possibilities:

Case 1: $|z_1z_2| < 1$. The numerical radii conditions force the two eigenvalues $z_1$ and $z_2$ to be in the open unit disc.

Case 2: $|z_1z_2| = 1$. In this case, the matrices $P^\perp U+z_1z_2 PU$ and $U^*P+z_1z_2U^*P$ are unitary matrices. Hence, the two eigenvalues $z_1$ and $z_2$ lie on the unit circle.

Case 3: $|z_1z_2| > 1$. By the symmetric property \eqref{symm},  $(w_1,w_2):=(1/\bar z_1,1/\bar z_2)\in\cW_{P,U}$, and $|w_1w_2| < 1$. Therefore by applying the numerical radii conditions, we get $|w_1|<1$ and $|w_2|<1$. Thus, $|z_1> 1$ and $|z_2| > 1$.

{\em{Proof of $(iii)\Rightarrow(i)$:}} For this part, all we need to establish is the non-emptiness of $\cW_{P,U}\cap\mathbb D^2$. The analysis below shows that actually
$$
\sigma_T(P^\perp U,U^*P)\subset \cW_{P,U}\cap\mathbb D^2.
$$
Since $(P^\perp U , U^*P)$ is a commuting pair of contractive matrices, $\sigma_T(P^\perp U,U^*P)$ is non-empty and contained in $\cW_{P,U}\cap\overline{\mathbb D}^2$. Moreover, since the product is the $0$ matrix, any joint eigenvalue of $(P^\perp U , U^*P)$ is either of the form $(z_1, 0)$ or $(0,z_2)$. Therefore using $(iii)$, we conclude that $\sigma_T(P^\perp U , U^*P) \subset \cW_{P,U}\cap\mathbb D^2$. \qed

It is of independent interest to note that actually each irreducible component of $\cW_{P,U}$ intersects $\mathbb D^2$. This is obvious because the zero set of a non-constant polynomial in $\mathbb C[z_1,z_2]$ cannot be contained in $\mathbb T^2\cup\mathbb E^2$.

\subsection{Proof of the converse part}
Let $\cW = Z(\xi)$ be a distinguished variety with respect to $\mathbb D^2$. With a slight abuse of notation, we denote by $\partial \cW$ the set $Z(\xi) \cap \mathbb T^2$. If $\mu$ is a finite positive measure on $\partial{\cW}$, we denote by $H^2(\mu)$ the norm closure of polynomials in $L^2(\partial{\cW},\mu)$. We shall need a result from the theory of Riemann surfaces.
\begin{lemma}[See Lemma 1.2 of Agler--McCarthy \cite{AM}] \label{AMmeasure}
Let $\cW$ be a distinguished variety with respect to $\mathbb D^2$. There is a positive finite Borel measure on $\partial{\cW}$ such that every point in $\cW\cap\mathbb D^2$ gives rise to a bounded point evaluation for $H^2{(\mu)}$, and such that the span of the bounded evaluation functionals is dense in $H^2{(\mu)}$.
\end{lemma}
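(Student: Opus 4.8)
The statement is quoted from Agler–McCarthy, so strictly speaking we may simply cite it; but let me sketch the argument one would give. The plan is to exploit the fact that $\cW$, being distinguished, is a (one-dimensional) compact subset of $\mathbb C^2$ whose only boundary on $\mathbb T^2$ is $\partial\cW = Z(\xi)\cap\mathbb T^2$, so that $\overline{\cW\cap\mathbb D^2}$ is, after desingularization, a compact bordered Riemann surface $R$ with boundary $\partial R$ mapping onto $\partial\cW$. First I would pass to the normalization $\pi\colon R^{\circ}\to\cW$ of the (finitely many, irreducible) components of $\cW$, so that $R^{\circ}$ is a finite disjoint union of smooth affine curves; the distinguishedness hypothesis guarantees that the part lying over $\cW\cap\mathbb D^2$ has compact closure $R$ with smooth analytic boundary $\partial R\subset R^{\circ}$ lying over $\mathbb T^2$, and that $R$ is a finite bordered Riemann surface.

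Next I would produce the measure. On a finite bordered Riemann surface $R$ one has harmonic measure: fix a base point $\lambda_0$ in the interior of $R$ and let $\omega$ be harmonic measure for $\lambda_0$ on $\partial R$; push it forward by $\pi$ to get a finite positive Borel measure $\mu$ on $\partial\cW$. (If some component is an annulus or has higher genus this still works; the key point is only that $R$ is a finite bordered surface, which holds because $\cW$ is an algebraic curve.) For this $\mu$ I would then check the two asserted properties. Boundedness of point evaluations: for $w\in\cW\cap\mathbb D^2$ pick a preimage $p\in R$; then for a polynomial $g$, $g\circ\pi$ is holomorphic on $R$ and continuous up to $\partial R$, so by the (sub)mean-value / Poisson representation on $R$ one has $|g(w)| = |(g\circ\pi)(p)| \le C_p \|g\circ\pi\|_{L^2(\partial R,\omega_p)} = C_p\|g\|_{L^2(\mu)}$ where $\omega_p$ is harmonic measure at $p$ and $C_p$ comes from comparing $\omega_p$ with the fixed $\omega$ (they are mutually boundedly absolutely continuous since $R$ is connected with nice boundary — handle each connected component of $R$ separately). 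Density of the evaluation functionals: if $f\in H^2(\mu)$ is orthogonal to every bounded evaluation functional $k_w$, then $f$ annihilates all point evaluations, hence $\langle g,f\rangle_{H^2(\mu)} = 0$ for all polynomials $g$ would have to be upgraded to $f=0$; concretely, one shows the evaluation functionals span a dense subspace by noting their orthogonal complement consists of $f$ with $\widehat f(w)=0$ for all $w\in\cW\cap\mathbb D^2$, and an $H^2$ function on the bordered surface vanishing on a set with an interior accumulation point is identically $0$.

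I would organize this as: (1) normalization of $\cW$ and the bordered-Riemann-surface structure of $\overline{\cW\cap\mathbb D^2}$; (2) definition of $\mu$ as pushforward of harmonic measure; (3) bounded point evaluations via the Poisson kernel of $R$ and comparison of harmonic measures; (4) density via the identity theorem on $R$. The main obstacle is step (1): one must verify carefully, using that $\cW$ is \emph{distinguished} (not merely algebraic), that $\overline{\cW\cap\mathbb D^2}$ really is a finite bordered Riemann surface — i.e.\ that its closure meets $\mathbb T^2$ in a one-real-dimensional analytic boundary and does not run off to infinity or acquire boundary inside $\mathbb D^2$. Once the geometry is pinned down, steps (2)–(4) are the standard Hardy-space theory on bordered surfaces. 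Since all of this is carried out in Agler–McCarthy \cite{AM}, in the paper itself it suffices to invoke Lemma 1.2 there.
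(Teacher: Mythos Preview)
The paper does not supply a proof of this lemma at all; it is stated with the attribution ``See Lemma 1.2 of Agler--McCarthy \cite{AM}'' and then immediately used as input to the proof of the converse part of Theorem~\ref{ourDV}. Your proposal correctly observes this and, in addition, sketches essentially the argument Agler and McCarthy give: pass to the normalization so that the closure of $\cW\cap\mathbb D^2$ becomes a finite bordered Riemann surface, take harmonic measure on the boundary and push it forward to $\partial\cW$, then verify bounded point evaluations via the Poisson representation and density via the identity principle. That outline is accurate and is the standard route; for the purposes of this paper, a bare citation suffices, exactly as you note in your final sentence.
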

Initially, our proof follows the ideas of Agler and McCarthy. Then at one stage, we invoke the theorem by Berger, Coburn and Lebow and find a projection matrix and a unitary matrix to model the concerned pair of pure isometries.

Let $\xi$ be a minimal polynomial in $\mathbb C[z_1,z_2]$ such that
$$\cW=\{(z_1,z_2)\in{\mathbb{C}^2}: \xi(z_1,z_2)=0\}.$$
Let $\mu$ be the measure on $\partial{\cW}$ from Lemma \ref{AMmeasure}. Let $(M_{z_1},M_{z_2})$ be the multiplication operators by the coordinate functions on
 $H^2{(\mu)}$. Since the bounded point evaluation functionals are dense, it follows that $M_{z_1}$, $M_{z_2}$ are pure isometries. Suppose $\xi$ is divisible by $z_1z_2$. Then $\xi(z_1,z_2)=z_1z_2\eta(z_1,z_2)$ for some polynomial $\eta$. Then $(1,0)$ is a point in $\cW$. This is not possible
since $\cW$ is distinguished. Hence, $\xi$ is not divisible by $z_1z_2$. Write
 \begin{align*}
 \xi(z_1,z_2)=\sum_{i=0}^{n}a_iz_1^i+\sum_{j=0}^{m}b_jz_2^{j}+z_1z_2\theta(z_1,z_2)
 \end{align*}
 where $a_n$ and $b_m$ are non-zero. This expression of the polynomial $\xi$ implies that
 $$
 z_1^n\in \operatorname{span}\{z_1^i:0\leq i \leq n-1\} + \operatorname{span}\{z_2^j: 1\leq j \leq m\} + \operatorname{Ran} M_{z_1}M_{z_2}
 $$and
 $$
z_2^m\in \operatorname{span}\{z_1^i:0\leq i \leq n\} + \operatorname{span}\{z_2^j: 1\leq j \leq m-1\} + \operatorname{Ran} M_{z_1}M_{z_2}.
 $$These containments together with a straightforward application of mathematical induction imply that
$$H^2(\mu)=\operatorname{Ran}M_{z_1}M_{z_2}+ \operatorname{span}\{z_1^i , z_2^j: 0\leq{i}\leq{n}, 0\leq{j}\leq{m}\}.$$
Therefore the range of $(I-M_{z_1}M_{z_2}M_{z_2}^{*}M_{z_1}^*)$ is finite dimensional. Since the product $M_{z_1}M_{z_2}$ is a pure isometry on $H^2{(\mu)}$, we get a finite model triple $(\cF, P, U)$ by the Berger--Coburn--Lebow Theorem such that the pair $(M_{z_1},M_{z_2})$ is unitary equivalent to
$(M_{P^\perp U + zPU} , M_{U^*P + zU^*P^\perp})$. The rest of the proof now follows by noting that the set of bounded point evaluation for $H^2(\mu)$ is precisely $\cW$. Hence, a point $(z_1,z_2)\in{\mathbb{D}^2}$ is in $\cW$ if and only if $(\overline{z_1}, \overline{z_2})$ is a joint eigenvalue of $(M_{z_1}^{*}, M^{*}_{z_2})$.

By Lemma \ref{jemultipliers}, this is equivalent to   $(z_1,z_2)$ being a joint eigenvalue of $(P^\perp U+z PU,U^*P+zU^*P^\perp)$. Consequently,
\begin{align*}
\cW\cap\mathbb D^2&=\{(z_1,z_2)\in\mathbb{D}^2:(z_1,z_2)\in\sigma_T(P^\perp U+z_1z_2PU,U^*P+z_1z_2U^*P^\perp)\}\\
&=\cW_{P,U}\cap\mathbb D^2.
\end{align*}

The only thing that remains to be proved is that a distinguished variety $\cW$ equals $\cW_{P,U}$ if and only if both $\cW$ and $\cW_{P,U}$ are contained in $\mathbb D^2\cup\mathbb T^2\cup\mathbb E^2$. If $\cW = \cW_{P,U}$, then from $(i)\Rightarrow(iii)$ of the forward direction, we know the containment. Conversely, if both $\cW$ and $\cW_{P,U}$ are contained in $\mathbb D^2\cup\mathbb T^2\cup\mathbb E^2$, then repeating the arguments that we gave at the end of Section \ref{throughinner}, the proof is complete. \qed

\subsection{Examples and Remarks}
\begin{remark}
A symmetric algebraic variety in $\mathbb C^2$ need not be distinguished with respect to $\mathbb D^2$. For example, take $P=\sbm{1&0\\0&0}$ and $U=I_{\mathbb C^2}$. Then
  $$
  \cW_{P,U}=\{(z,1):z\in\mathbb C\}\cup\{(1,z):z\in\mathbb C\}
  $$
  is symmetric but clearly is not a distinguished variety with respect to $\mathbb D^2$. Indeed, the model triple $(\mathbb C^2,\sbm{1&0\\0&0},I_{\mathbb C^2})$ does not satisfy the compatibility conditions \eqref{StrangeConditions1}.
\end{remark}

\begin{remark}
What is special about the linear pencils $P^\perp U+zPU$ and $U^*P+zU^*P^\perp$? One could start with  any two matrix-valued rational inner functions  $\Phi$ and $\Psi$ on $\mathbb D$ such that
\begin{enumerate}
\item[(i)] the maps $z\mapsto \nu(\Phi(z))$ and $z\mapsto \nu(\Psi(z))$ are non-constant on $\mathbb D$;
\item[(ii)] for each $z\in\mathbb D$, the pair of matrices $(\Phi(z),\Psi(z))$ is commuting; and
\item[(iii)] $\Phi(z)\Psi(z)=z$ for all $z\in\mathbb D$.
\end{enumerate}
Then
$$
\cW_{\Phi,\Psi}:=\{(z_1,z_2)\in\mathbb C^2:(z_1,z_2)\in\sigma_T(\Phi(z_1z_2),\Psi(z_1z_2))\}
$$is a distinguished variety with respect to $\mathbb C^2$. The proof is along the same line as the proof of the forward direction of Theorem \ref{ourDV}.

 It is a consequence of the Berger--Coburn--Lebow Theorem that any such pair of functions is jointly unitarily equivalent to $(P^\perp U+zPU,U^*P+zU^*P^\perp)$ for some model triple $(\cF,P,U)$. We leave the details to the reader.
\end{remark}

\begin{example}\label{E:GenNeil}
A model triple $(\cF,P,U)$ for the Neil parabola $\{(z_1,z_2)\in\mathbb C^2:z_1^3=z_2^2\}$ is given by
$$
\cF=\mathbb C^5,\quad P=P_{\mathbb C^2\oplus \{0_{\mathbb C^3}\}}\quad \text{and}\quad U=E_\sigma,
$$where $E_\sigma$ is the permutation matrix induced by the permutation $\sigma=(13452)$ in $S_5$.
%
Indeed, a simple matrix computation gives us the following
\begin{align*}
 P^{\perp}U+z_1z_2PU&= \begin{bmatrix}
     0 & z_1z_2 & 0 & 0 & 0\\
     0 & 0 & 0 & 0 & z_1z_2\\
     1 & 0 & 0 & 0 & 0\\
     0 & 0 & 1 & 0 & 0\\
     0 & 0 & 0 & 1 & 0
    \end{bmatrix} \text{, } U^*P+z_1z_2U^*P^{\perp}=
    \begin{bmatrix}
     0 & 0 & z_1z_2 & 0 & 0\\
     1 & 0 & 0 & 0 & 0\\
     0 & 0 & 0 & z_1z_2 & 0\\
     0 & 0 & 0 & 0 & z_1z_2\\
     0 & 1 & 0 & 0 & 0
    \end{bmatrix}.
\end{align*}
A not very lengthy calculation yields that the set
\begin{align*}
\cW_{P,U}=\{(z_1, z_2)\in\mathbb{C}^2:(z_1, z_2)\in\sigma_T(P^\perp U+z_1z_2PU,U^*P+z_1z_2U^*P^\perp)\}
\end{align*}
is the same as the Neil parabola.

More generally, one can check by a somewhat tedious computation that a model triple for the distinguished variety
\begin{align}\label{GenNeil}
\mathcal{N}_{n,m}:=\{(z_1, z_2) \in \mathbb{C}^2: z_1^n=z_2^m \}; \quad n, m\geq{1}
\end{align}is given by
$$\cF= \mathbb{C}^{m+n}, \quad  P=P_{\mathbb C^m\oplus \{0_{\mathbb C^n}\},} \quad
U^*=\begin{bmatrix}
A & B  \\
C & D
\end{bmatrix},
$$
where $B$ is the $m\times n$ matrix with $1$ at the $(1,1)$ entry and zero elsewhere, $C$ is the $n\times m$ matrix with $1$ at the $(n,m)$ entry and zero
elsewhere, $D$ is the $n\times n$ upper triangular matrix with $1$ in the super diagonal entries and zero elsewhere, and $A$ is the $m\times m$ matrix given as
$$A=\begin{bmatrix}
\huge0 & 0 \\
I_{m-1} & 0
\end{bmatrix}.$$
\end{example}

\begin{remark}
Example \ref{E:GenNeil} shows that there is an $m+n$ dimensional model triple $(\cF,P,U)$ for the distinguished variety $\cN_{n,m}$ as in \eqref{GenNeil}. Why is the dimension $m+n$?  Theorem 2.1 of Knese in \cite{Knese-TAMS2010} asserts that if $\cW$ is a distinguished variety determined by a polynomial $\xi(z_1,z_2)$ of degree $(n,m)$ (i.e., the highest power of $z_1$ and $z_2$ in $\xi$ is $n$ and $m$, respectively) and if there is no polynomial of any smaller degree determining $\cW$, then there is a unitary
$$
 \begin{bmatrix} A&B\\C&D\end{bmatrix}:\begin{bmatrix}\mathbb C^m\\\mathbb C^n \end{bmatrix}\to \begin{bmatrix}\mathbb C^m\\\mathbb C^n \end{bmatrix}
$$
such that the transfer function arising out of the unitary $\sbm{ A & B \\ C & D}$ is the Agler--McCarthy inner function for $\cW$.
We shall see in section 4 that for every such choice of $\sbm{ A & B \\ C & D}$, we have a model triple $(\cF,P,U)$ with the space $\cF$ being the same as the space which the unitary $\sbm{A & B \\ C & D}$ acts on.
\end{remark}

\subsection{The numerical radius conditions}
At the end of this subsection, we shall prove that checking the compatibility condition \eqref{StrangeConditions1} is as simple as checking non-constancy of a couple of functions.

\begin{lemma}\label{L:BdrytoInt}
Let $\mathcal{F}$ be a Hilbert space and let $\varphi:\mathbb D \to\cB(\cF)$ be any analytic function.
\begin{enumerate}

\item If the map $z\mapsto \nu(\varphi(z))$ is non-constant in $\mathbb D$ and if $\nu(\varphi(z)) \le 1$ for all $z\in\mathbb D$, then $\nu(\varphi(z))<1$ for all $z\in\mathbb D$.

\item  If $\varphi$ can be continuously extended to $\mathbb T$ and if $\nu(\varphi(\zeta))\leq 1$ for all $\zeta\in\mathbb T$, then $\nu(\varphi(z))\leq 1$ for all $z\in\mathbb D$.
\end{enumerate}
\end{lemma}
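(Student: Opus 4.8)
The plan is to reduce both statements to properties of the single real-valued function $u(z):=\nu(\varphi(z))$ on $\mathbb D$, the essential point being that $u$ is subharmonic. First I would record two elementary facts. Since the numerical radius is a norm on $\cB(\cF)$ with $\tfrac{1}{2}\|A\|\le\nu(A)\le\|A\|$, we get $|u(z)-u(w)|\le\nu(\varphi(z)-\varphi(w))\le\|\varphi(z)-\varphi(w)\|$, so $u$ is continuous on $\mathbb D$ (and on $\overline{\mathbb D}$ whenever $\varphi$ extends continuously there). Moreover, for each unit vector $h\in\cF$ the function $f_h(z):=\langle\varphi(z)h,h\rangle$ is holomorphic on $\mathbb D$, hence $|f_h|$ is subharmonic, and $u=\sup_{\|h\|=1}|f_h|$.

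The key step is that $u$ is subharmonic on $\mathbb D$. Because $u$ is continuous (hence upper semicontinuous), it agrees with its own upper-semicontinuous regularization, and the pointwise supremum of a locally bounded above family of subharmonic functions is then subharmonic; alternatively one verifies the sub-mean-value inequality directly, writing $|f_h(z_0)|\le\frac{1}{2\pi}\int_0^{2\pi}|f_h(z_0+re^{i\theta})|\,d\theta$ for every admissible circle and interchanging the supremum over $h$ with the integral. Granting subharmonicity, part (1) is immediate: if $u$ is non-constant with $u\le1$ and $u(z_0)=1$ for some $z_0\in\mathbb D$, then $u$ attains its global maximum over the connected open set $\mathbb D$ at an interior point, so the maximum principle for subharmonic functions forces $u\equiv1$, a contradiction; hence $u<1$ on $\mathbb D$. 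For part (2), the hypothesis provides a continuous extension of $\varphi$, hence of $u$, to the compact set $\overline{\mathbb D}$, and $u$ is subharmonic on $\mathbb D$ with $u\le1$ on $\mathbb T$, so the maximum principle yields $\max_{\overline{\mathbb D}}u=\max_{\mathbb T}u\le1$. Part (2) also admits a bare-hands proof bypassing subharmonicity: each $f_h$ is holomorphic on $\mathbb D$, continuous on $\overline{\mathbb D}$, and $\le1$ in modulus on $\mathbb T$, so $|f_h|\le1$ on $\mathbb D$ by the maximum modulus principle, and one takes the supremum over $h$.

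The only delicate point—and the step I expect to take the most care—is the subharmonicity of $u$ when $\cF$ is infinite dimensional, where the supremum defining $\nu(\varphi(z))$ need not be attained; this is precisely why I would establish continuity of $u$ first (removing the usual upper-semicontinuous-regularization caveat for suprema of subharmonic functions), or else prove the sub-mean-value estimate by hand. For part (1) there is also a self-contained alternative avoiding subharmonic function theory altogether: choose unit vectors $h_n$ with $|f_{h_n}(z_0)|\to1$, use Montel's theorem to pass to a subsequence with $f_{h_n}\to g$ locally uniformly, where $|g|\le1$ on $\mathbb D$ and $|g(z_0)|=1$, so $g$ is a unimodular constant; then $\nu(\varphi(z))\ge\lim_n|f_{h_n}(z)|=1$ for every $z\in\mathbb D$, which together with $\nu(\varphi(z))\le1$ forces $z\mapsto\nu(\varphi(z))$ to be constant, contradicting the hypothesis.
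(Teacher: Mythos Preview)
Your proposal is correct and follows essentially the same approach as the paper: for part (1) the paper invokes the subharmonicity of $z\mapsto\nu(\varphi(z))$ (citing Vesentini) and applies the maximum principle, while you supply a self-contained justification of that subharmonicity via the supremum representation $u=\sup_{\|h\|=1}|f_h|$ together with continuity of $u$; for part (2) your ``bare-hands'' argument---apply the maximum modulus principle to each scalar function $f_h(z)=\langle\varphi(z)h,h\rangle$ and then take the supremum over unit vectors $h$---is exactly the paper's proof. Your added Montel-type alternative for (1) is a nice extra route but not needed.
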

\begin{proof}

It is well-known that the map $z \mapsto \nu(\varphi(z))$ is subharmonic for an analytic $\varphi$, see Theorem 4.4 and Corollary 4.5 in \cite{Ves}. Thus, the maximum principle applies and we get the conclusion (1).

For the second part, let us fix a unit vector $h$ in $\cF$ and define the analytic map $\varphi_h:\mathbb D\to\mathbb C$ as $\varphi_h(z)=\langle \varphi(z)h,h\rangle$ for all $z\in\overline{\mathbb D}$. Then by the hypothesis $|\varphi_h(\zeta)|\leq 1$ for $\zeta \in \mathbb T$. Therefore, by the maximum modulus principle,
\begin{align}\label{truth1}
|\varphi_h(z)|=|\langle\varphi(z)h,h\rangle|\leq1,\text{ for every $z\in\mathbb D$}.
\end{align} Since $h$ is arbitrary, $\nu(\varphi(z))=\sup\{|\langle\varphi(z)h,h\rangle|:\|h\|=1\}\leq1$ for every $z\in\mathbb D$.
\end{proof}

It is folklore that for a bounded operator $A$, we have $\nu(A) \le 1$ if and only if $Re(\beta A) \le I$ for all $\beta\in \mathbb T$; see Lemma 2.9 in \cite{BhPSR}. The next result is about when the inequalities can be made strict. This result will be used many times and is included for completeness since we could not find this in literature.
\begin{lemma}\label{StrIn}
	Let $A$ be a square matrix. If $Re(\beta A)< I$ for all $\beta\in \mathbb T$, then $\nu(A)<1$. Conversely, if $\nu(A)<1$, then $Re(\beta A)< I$ for all $\beta \in \mathbb T$.
\end{lemma}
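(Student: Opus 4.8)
The plan is to deduce both directions from the already-quoted folklore fact (Lemma 2.9 of \cite{BhPSR}): $\nu(A)\le 1$ if and only if $\mathrm{Re}(\beta A)\le I$ for all $\beta\in\mathbb T$. For a \emph{finite} matrix we may upgrade the weak inequalities to strict ones by a compactness/continuity argument, because the relevant quantities are continuous functions of $\beta$ on the compact circle $\mathbb T$ and of a small real perturbation parameter.

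First I would prove the forward direction. Assume $\mathrm{Re}(\beta A)<I$ for all $\beta\in\mathbb T$. The map $\beta\mapsto \lambda_{\max}\big(\mathrm{Re}(\beta A)\big)$ (the largest eigenvalue of the Hermitian matrix $\mathrm{Re}(\beta A)$) is continuous on the compact set $\mathbb T$, hence attains a maximum value $c<1$. So $\mathrm{Re}(\beta A)\le c I$ for all $\beta\in\mathbb T$, i.e.\ $\mathrm{Re}(\beta (A/c))\le I$ for all $\beta\in\mathbb T$. By Lemma 2.9 of \cite{BhPSR}, $\nu(A/c)\le 1$, that is $\nu(A)\le c<1$, as desired.

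For the converse, assume $\nu(A)<1$; pick $c$ with $\nu(A)<c<1$, so $\nu(A/c)<1\le 1$, whence by Lemma 2.9 of \cite{BhPSR} applied to $A/c$ we get $\mathrm{Re}(\beta(A/c))\le I$ for all $\beta\in\mathbb T$, i.e.\ $\mathrm{Re}(\beta A)\le cI<I$ for all $\beta\in\mathbb T$. (The final strict inequality $cI<I$ between Hermitian matrices is immediate since $c<1$.) Alternatively, and just as cleanly, one may argue directly: if $\nu(A)<1$ then for every unit vector $h$ and every $\beta\in\mathbb T$ one has $\mathrm{Re}\,\langle \beta A h,h\rangle\le |\langle\beta A h,h\rangle|=|\langle A h,h\rangle|\le\nu(A)<1$, so $\langle(I-\mathrm{Re}(\beta A))h,h\rangle\ge 1-\nu(A)>0$, and since $\mathbb T$ is compact this lower bound is uniform, giving $\mathrm{Re}(\beta A)\le \nu(A)I<I$.

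The only mild subtlety — and the one point worth being careful about — is that strictness of an operator inequality $X<I$ for a Hermitian matrix $X$ means $\lambda_{\max}(X)<1$, not merely $\langle Xh,h\rangle<1$ for each fixed $h$; this is why finite-dimensionality (compactness of the unit sphere, so that the supremum over $h$ is attained) is used, and it is exactly the place where the hypothesis ``$A$ is a square matrix'' enters. Everything else is a routine rescaling of the folklore lemma, so I do not anticipate any real obstacle.
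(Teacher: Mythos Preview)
Your proof is correct. The converse direction (your direct argument) is essentially identical to the paper's: both observe that $\langle \mathrm{Re}(\beta A)h,h\rangle = \mathrm{Re}\,\beta\langle Ah,h\rangle$ is bounded in absolute value by $\nu(A)<1$ uniformly in $h$, which gives $\mathrm{Re}(\beta A)\le \nu(A)I<I$.

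For the forward direction the two arguments differ in where the compactness is placed. The paper argues by contradiction: from the folklore lemma one already has $\nu(A)\le 1$, and if $\nu(A)=1$ then by compactness of the numerical range of a \emph{matrix} there is a unit vector $h_0$ with $|\langle Ah_0,h_0\rangle|=1$; choosing $\beta\in\mathbb T$ so that $\beta\langle Ah_0,h_0\rangle=1$ contradicts $\mathrm{Re}(\beta A)<I$. You instead use compactness of $\mathbb T$: the continuous function $\beta\mapsto\lambda_{\max}(\mathrm{Re}(\beta A))$ attains a maximum $c<1$, and then a rescaling reduces to the non-strict folklore lemma. Both exploit finite-dimensionality, but at different points---the paper via attainment of the numerical radius, you via the (finite-dimensional) continuity of $\lambda_{\max}$. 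Your rescaling device has the minor advantage of making the reduction to the folklore lemma completely mechanical in both directions; the paper's argument is perhaps slightly more direct and makes the role of the matrix hypothesis more visibly tied to the numerical range.
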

\begin{proof}
Finite dimensionality is of crucial importance here. Suppose $Re(\beta A)< I$ for all $\beta \in \mathbb T$ and $\nu(A)=1$. Since $A$ is a matrix, the numerical range of $A$ is a compact set. Hence there is a vector $h_0$ with $\|h_0\|=1$ such that $|\langle Ah_0,h_0\rangle|=1$. Choosing $\beta$ suitably on the unit circle, we get $\beta\langle Ah_0,h_0\rangle=1$ which is a contradiction. Therefore $\nu(A)<1$.

Conversely, let $\nu(A)<1$. Take any vector $h$ with $\|h\|=1$. Then for all $\beta \in \mathbb T$,
	$$\langle Re(\beta A)h,h\rangle=Re\beta\langle Ah,h\rangle .$$
	Hence we have,
	$$\big\vert\langle Re(\beta A)h,h\rangle\big\vert\leq|\beta|\nu(A)<1.$$
	Therefore, $Re(\beta A)< I$ for all $\beta \in \mathbb T$.
\end{proof}

\begin{lemma}\label{NUM RAD}
Let $(\cF,P,U)$ be any model triple.
\begin{enumerate}
\item Then for every $z\in\overline{\mathbb D}$,
$$\nu(P^\perp U+zU^*P)\leq 1\text{ and } \nu( U^*P+zP^\perp U)\leq 1;$$
\item If the map $\varphi_1:\mathbb D\to\mathbb R$ defined as $\varphi_1(z)=\nu(P^\perp U+z U^*P)$ is nonconstant, then
$$\nu(P^\perp U+z U^*P)<1 \text{ for every $z\in\mathbb D$.}$$ Moreover, if in addition, $\dim\cF<\infty$ then
$$\nu(P^\perp U+zP U)<1\text{ and }\nu(U^*P^\perp+zU^*P)<1 \text{ for all $z$ in $\mathbb D$};$$.
\item If the map $\varphi_2:\mathbb D\to\mathbb R$ defined as $\varphi_2(z)=\nu(U^*P+z P^\perp U)$ is nonconstant, then
$$\nu(U^*P+z P^\perp U)<1 \text{ for every $z\in\mathbb D$.}$$ Furthermore, if $\dim\cF<\infty$ then
$$\nu(U^*P+z U^*P^\perp)<1\text{ and }\nu(PU+zP^\perp U)<1\text{ for all $z$ in $\mathbb D$.}$$
\end{enumerate}
\end{lemma}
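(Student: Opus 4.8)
The plan is to settle (1) by a direct estimate on the numerical range, to derive the first assertion of (2) from the subharmonicity already recorded in Lemma \ref{L:BdrytoInt}, to obtain the ``moreover'' part of (2) by a compactness argument available only when $\dim\cF<\infty$, and finally to deduce (3) by feeding the model triple $(\cF,P^\perp,U)$ into (2). For (1), fix $z\in\overline{\mathbb D}$ and a unit vector $h\in\cF$ and write $h=Ph\oplus P^\perp h$. Using self-adjointness of $P$ and $P^\perp$ one computes
$$\langle (P^\perp U+zU^*P)h,h\rangle=\langle Uh,P^\perp h\rangle+z\,\overline{\langle Uh,Ph\rangle}.$$
Since $Ph\perp P^\perp h$, $\|Ph\|^2+\|P^\perp h\|^2=1$, and $\|Uh\|=1$, Bessel's inequality applied to the orthonormal pair $Ph/\|Ph\|,\,P^\perp h/\|P^\perp h\|$ together with the Cauchy--Schwarz inequality in $\mathbb C^2$ gives $|\langle Uh,Ph\rangle|+|\langle Uh,P^\perp h\rangle|\le 1$ (the degenerate cases $Ph=0$ or $P^\perp h=0$ being immediate). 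Hence the displayed quantity has modulus $\le 1$ whenever $|z|\le 1$, and taking the supremum over $h$ yields $\nu(P^\perp U+zU^*P)\le1$. The bound $\nu(U^*P+zP^\perp U)\le1$ follows from the identical computation, the two summands simply swapping places.

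For the first assertion of (2), the map $z\mapsto\nu(P^\perp U+zU^*P)$ is the numerical radius of an affine (hence analytic) $\cB(\cF)$-valued function, bounded by $1$ on $\mathbb D$ by (1), so Lemma \ref{L:BdrytoInt}(1) applies verbatim. For the ``moreover'' I would argue two contrapositives. Put $B_1(z)=(P^\perp+zP)U$ and $B_3(z)=U^*(P^\perp+zP)$; each is contraction-valued on $\mathbb D$ (the factor $P^\perp+zP$ is a contraction there and $U$ is unitary), so $\nu(B_1(\cdot)),\nu(B_3(\cdot))\le1$ on $\mathbb D$. Suppose $\nu(B_1(z_0))=1$ for some $z_0\in\mathbb D$. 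By Lemma \ref{L:BdrytoInt}(1) read contrapositively, $\nu(B_1(\cdot))$ must then be constant $\equiv1$ on $\mathbb D$; in particular $\nu(P^\perp U)=\nu(B_1(0))=1$.

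Now comes the step I expect to be the crux, and the only place where finite dimensionality is used in an essential way. Because $P^\perp U$ is a matrix, its numerical range is compact, so the value $1$ is \emph{attained}: there is a unit vector $h_0$ with $|\langle P^\perp Uh_0,h_0\rangle|=1$. Chasing equality through
$$1=|\langle P^\perp Uh_0,h_0\rangle|=|\langle Uh_0,P^\perp h_0\rangle|\le\|Uh_0\|\,\|P^\perp h_0\|=\|P^\perp h_0\|\le1$$
forces $\|P^\perp h_0\|=1$, i.e. $h_0\in\ker P$ and $P^\perp h_0=h_0$, and then (equality in Cauchy--Schwarz) $Uh_0=\mu h_0$ with $|\mu|=1$. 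For such an $h_0$ one finds $(P^\perp U+zU^*P)h_0=P^\perp(\mu h_0)+zU^*(0)=\mu h_0$ for \emph{every} $z$, whence $|\langle (P^\perp U+zU^*P)h_0,h_0\rangle|=1$ for every $z$; combined with (1) this says $\varphi_1\equiv1$, contradicting the hypothesis of (2). The $B_3$ case runs identically, starting instead from $\nu(U^*P^\perp)=\nu(B_3(0))=1$ and the chain $1=|\langle U^*P^\perp h_0,h_0\rangle|=|\langle P^\perp h_0,Uh_0\rangle|\le\|P^\perp h_0\|\,\|Uh_0\|\le1$, which again produces $h_0\in\ker P$ and $Uh_0=\mu h_0$, and substituting into $P^\perp U+zU^*P$ once more forces $\varphi_1\equiv1$. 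In short, the whole difficulty is that a single vector $h_0$ — lying in $\ker P$ and fixed up to a unimodular scalar by $U$ — simultaneously pins the numerical radius of $P^\perp U+zU^*P$ to $1$ for all $z$.

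For (3), apply (2) to the model triple $(\cF,P^\perp,U)$. For that triple the function in the role of $\varphi_1$ is $z\mapsto\nu(PU+zU^*P^\perp)$, which upon passing to adjoints equals $z\mapsto\nu(U^*P+\bar z P^\perp U)=\varphi_2(\bar z)$, hence is non-constant exactly when $\varphi_2$ is; and the functions in the roles of $B_1$ and $B_3$ become $(P+zP^\perp)U=PU+zP^\perp U$ and $U^*(P+zP^\perp)=U^*P+zU^*P^\perp$. Reading off the conclusions of (2) for $(\cF,P^\perp,U)$ therefore gives precisely the first assertion and the ``furthermore'' of (3). The routine ingredients are thus (1) and the two invocations of Lemma \ref{L:BdrytoInt}, while the substantive point throughout is the compactness step in the ``moreover'' of (2).
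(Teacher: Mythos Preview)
Your proof is correct and takes a genuinely different route from the paper in two places. For part~(1), the paper proceeds via the real--part characterization of the numerical radius: it observes that for $\eta\in\mathbb T$ one has $\operatorname{Re}\eta(P^\perp U+U^*P)=\operatorname{Re}(\eta P^\perp+\bar\eta P)U$, which is the real part of a unitary and hence $\le I$; a square--root substitution then handles $\zeta\in\mathbb T$ in place of $1$, and finally Lemma~\ref{L:BdrytoInt}(2) pushes the bound from $\mathbb T$ into $\mathbb D$. Your direct Bessel/Cauchy--Schwarz estimate bypasses both the unitary identity and the subharmonicity step, and works for all $z\in\overline{\mathbb D}$ at once. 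For the ``moreover'' of part~(2), the paper uses Lemma~\ref{StrIn} to pass from $\nu(P^\perp U+zU^*P)<1$ to the strict inequalities $\operatorname{Re}\beta(P^\perp U+zU^*P)<I$, then performs an algebraic regrouping (essentially the substitution $w=\bar\beta^2\bar z$) to rewrite these as $\operatorname{Re}\beta(P^\perp U+wPU)<I$, and applies Lemma~\ref{StrIn} again in the reverse direction. Your argument instead runs by contrapositive: if $\nu((P^\perp+z_0P)U)=1$ for some $z_0\in\mathbb D$, Lemma~\ref{L:BdrytoInt}(1) forces $\nu(P^\perp U)=1$, compactness of the numerical range (finite dimension) produces a unit eigenvector $h_0\in\ker P$ with $Uh_0=\mu h_0$, and this single vector then pins $\varphi_1\equiv1$. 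Both approaches use finite dimensionality in exactly one place (Lemma~\ref{StrIn} versus compactness of the numerical range), but yours makes visible \emph{why} the strict inequality can fail---only through a common eigenvector of $P$ and $U$---while the paper's regrouping trick is more mechanical but perhaps more readily adapted to other linear--pencil identities. Your reduction of (3) to (2) via the model triple $(\cF,P^\perp,U)$ is also cleaner than the paper's ``similar to (2)''.
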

\begin{proof}
For $(1)$, we first prove it for boundary points and then use Lemma \ref{L:BdrytoInt} to conclude for interior points. To this end, we pick a uni-modular $\eta$ and compute
$$
\text{Re}\eta(P^\perp U+U^*P)=\frac{1}{2}((\eta P^\perp+\bar\eta P)U+U^*(\eta P+\bar\eta P^\perp))=\operatorname{Re }(\eta P^\perp+\bar\eta P)U.
$$We note that $(\eta P^\perp+\bar\eta P)U$ is a product of two unitaries and therefore we conclude that {\em for every projection $P$ and every unitary $U$ acting on a Hilbert space
\begin{align}\label{lagbe}
\operatorname{Re}\eta(P^\perp U+U^*P) \leq I.
\end{align}}Now for every $\zeta,\eta\in\mathbb T$, we note that
$$
\operatorname{Re}\eta(P^\perp U+\zeta U^*P)=\operatorname{Re}\eta\zeta^{\frac{1}{2}}(\bar\zeta^{\frac{1}{2}}P^\perp U+\zeta^{\frac{1}{2}}U^*P)=\text{Re}\eta'(P^\perp U'+U'^*P)
$$where $\eta'=\eta\zeta^{\frac{1}{2}}$ and $U'=\bar\zeta^{\frac{1}{2}}U$. Therefore applying \eqref{lagbe} we conclude that for every $\zeta,\eta\in\mathbb T$
$$
\operatorname{Re}\eta(P^\perp U+\zeta U^*P)\leq I.
$$This proves that for every $\zeta\in\mathbb T$, $\nu(P^\perp U+\zeta U^*P)\leq 1$. Now apply part (2) of Lemma \ref{L:BdrytoInt} to the analytic function $z\mapsto P^\perp U+zU^*P$.

For the first part of part (2), just apply part (1) of Lemma \ref{L:BdrytoInt} to the linear pencil $z\mapsto P^\perp U+zU^*P$. For the second part, we use the finite dimensionality assumption. By Lemma \ref{StrIn}, $\nu(P^\perp U+zU^*P)<1$ implies $Re(\beta(P^\perp U+zU^*P))<I$ for any $\beta\in{\mathbb{T}}$ and $z\in\mathbb D$. This means that for every fixed $\beta\in\mathbb T$,
\begin{align*}
\beta(P^\perp U+zU^*P)+\overline{\beta}(U^*P^\perp+\overline{z} PU)<2I  \text{ for every $z\in\mathbb D$},
\end{align*}
which is same as saying that
\begin{align*}
\beta(P^\perp U+\overline{\beta}^2 \bar{z} PU)+\overline{\beta}(U^*P^\perp+{\beta^2 z}U^*P)<2I \text{ for every $z\in\mathbb D$}.
\end{align*}
The above equation is true for all $\beta\in{\mathbb{T}}$. Hence from Lemma \ref{StrIn} again, we have $\nu(P^\perp U+zPU)<1$ for all $z\in{\mathbb{D}}$. For a proof of the second inequality, one just uses the fact that for every Hilbert space bounded operator $A$, $\nu(A)=\nu(A^*)$ and does a similar computation as above to conclude that $\nu(U^*P^\perp+zU^*P)<1$ for every $z\in\mathbb D$.

Proof of part (3) is similar to that of part (2).
\end{proof}

We note down a direct consequence of Lemma \ref{NUM RAD} that gives an easily checkable condition to verify the compatibility condition \eqref{StrangeConditions1}.
\begin{corollary} \label{crucial}
Let $(\cF,P,U)$ be a finite model triple such that both the functions
$$
 z\mapsto \nu(P^\perp U+z U^*P)\quad \text{ and }\quad  z\mapsto \nu(U^*P+z P^\perp U)
$$are nonconstant on $\mathbb D$. Then
\begin{equation} \label{StrangeConditions} \nu(U^*(P + z P^\perp)) < 1  \text{ and } \nu((P^\perp + zP)U) < 1 \end{equation}
for all $z$ in the open unit disc $\mathbb D$. \end{corollary}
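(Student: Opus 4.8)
The plan is to read off both inequalities directly from Lemma \ref{NUM RAD}, so that Corollary \ref{crucial} becomes essentially a bookkeeping consequence. First I would rewrite the two expressions appearing in \eqref{StrangeConditions} in the shape used in that lemma: since $U^*(P+zP^\perp)=U^*P+zU^*P^\perp$ and $(P^\perp+zP)U=P^\perp U+zPU$, it suffices to prove $\nu(U^*P+zU^*P^\perp)<1$ and $\nu(P^\perp U+zPU)<1$ for all $z\in\mathbb D$.

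For the second of these, I would invoke part (2) of Lemma \ref{NUM RAD}: the hypothesis that $\varphi_1(z)=\nu(P^\perp U+zU^*P)$ is nonconstant on $\mathbb D$, together with $\dim\cF<\infty$, gives precisely $\nu(P^\perp U+zPU)<1$ for every $z\in\mathbb D$. For the first, I would invoke part (3) of Lemma \ref{NUM RAD}: the hypothesis that $\varphi_2(z)=\nu(U^*P+zP^\perp U)$ is nonconstant on $\mathbb D$, again with $\dim\cF<\infty$, gives $\nu(U^*P+zU^*P^\perp)<1$ for every $z\in\mathbb D$. Putting the two conclusions side by side yields \eqref{StrangeConditions}.

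There is no genuine obstacle here; the only point demanding care is the algebraic identification $U^*(P+zP^\perp)=U^*P+zU^*P^\perp$ and $(P^\perp+zP)U=P^\perp U+zPU$, which is what makes the hypotheses of parts (2) and (3) line up with the desired conclusions. I would also flag, for the reader, that the finite-dimensionality built into ``finite model triple'' is exactly what powers the passage (through Lemma \ref{StrIn}) from $\nu(P^\perp U+zU^*P)<1$ to $\nu(P^\perp U+zPU)<1$, and likewise from $\nu(U^*P+zP^\perp U)<1$ to $\nu(U^*P+zU^*P^\perp)<1$, inside Lemma \ref{NUM RAD}.
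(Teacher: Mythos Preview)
Your proposal is correct and is exactly the paper's approach: the paper's proof simply says that parts (2) and (3) of Lemma \ref{NUM RAD} together with the finite-dimensionality assumption justify the corollary. Your write-up just makes the algebraic identifications $U^*(P+zP^\perp)=U^*P+zU^*P^\perp$ and $(P^\perp+zP)U=P^\perp U+zPU$ explicit and flags the role of Lemma \ref{StrIn}, which is a welcome bit of added clarity but not a different argument.
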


\begin{proof}
Parts (2) and (3) of Lemma \ref{NUM RAD} along with the finite dimensionality assumption justify the corollary.
\end{proof}

It should be observed that a projection $P$ that satisfies \eqref{StrangeConditions} is necessarily non-trivial.

\section{A new realization formula and the passage between the two descriptions} \label{AMandBCL}
\subsection{The Realization Formula}
A pair $(\Psi, \cE)$ is called a {\em contractive analytic} function if $\cE$ is a Hilbert space and
$\Psi:\mathbb D\to \cB(\cE)$ is analytic and contractive, i.e., $\|\Psi\|_\infty:=\sup_{\mathbb D}\|\Psi(z)\|\leq 1$.  It is a folklore that $(\Psi, \cE)$ is a contractive analytic function if and only if there is an auxiliary Hilbert space $\cH$ and a unitary operator
 $$
 U=\begin{bmatrix} A&B\\C&D\end{bmatrix}:\begin{bmatrix}\cE\\\cH\end{bmatrix}\to\begin{bmatrix}\cE\\\cH\end{bmatrix}
 $$such that
 $$
 \Psi(z)=A+zB(I_{\cH}-zD)^{-1}C.
 $$The operator $U$ is called the {\em unitary colligation} for $\Psi$ and the function $\Psi$ is called the {\em transfer function} for the unitary $U$. The new realization formula that we obtain is the following.
\begin{thm}[A new realization formula] \label{Thm:RealForm}
Every projection $P$ and a unitary $U$ acting on a Hilbert space $\cF$ gives rise to a contractive analytic function
$\Psi_{P,U}:\mathbb D\to\cB(\operatorname{Ran}P)$ defined by
\begin{align}\label{RealForm}
\Psi_{P,U}(z):=P (I_\cF-zU^*P^\perp )^{-1} U^*P|_{\operatorname{Ran}P}.
\end{align}
Conversely, if $\cE$ is a Hilbert space and $\Psi:\mathbb D\to\cB(\cE)$ is a contractive analytic function, then $\Psi=\Psi_{P,U}$ for some unitary $U$ acting on a Hilbert space and $P$ is the orthogonal projection of $\cF$ onto $\cE$.

Moreover, when $\mathcal E$ is finite dimensional and $\Psi$ is rational inner, the Hilbert space $\cF$ can be chosen to be finite dimensional.
\end{thm}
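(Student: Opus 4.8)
The plan is to identify the formula \eqref{RealForm} with the classical transfer-function realization recalled just above the theorem, by a single block-matrix computation. First I would fix the orthogonal decomposition $\cF=\operatorname{Ran}P\oplus\operatorname{Ran}P^\perp$ and write the unitary $U^*$ in block form
$$
U^*=\begin{bmatrix} A & B\\ C & D\end{bmatrix}:\begin{bmatrix}\operatorname{Ran}P\\\operatorname{Ran}P^\perp\end{bmatrix}\longrightarrow\begin{bmatrix}\operatorname{Ran}P\\\operatorname{Ran}P^\perp\end{bmatrix}.
$$
With respect to this decomposition, $U^*P$ restricted to $\operatorname{Ran}P$ is the column $\sbm{A\\ C}$, while $U^*P^\perp=\sbm{0&B\\0&D}$, so that $I_\cF-zU^*P^\perp=\sbm{I&-zB\\0&I-zD}$ is block upper triangular; since $\|zD\|\le|z|<1$ for $z\in\mathbb D$, it is invertible with analytic inverse on $\mathbb D$. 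Inverting the triangular block and multiplying out, I expect to obtain
$$
\Psi_{P,U}(z)=P(I_\cF-zU^*P^\perp)^{-1}U^*P|_{\operatorname{Ran}P}=A+zB(I-zD)^{-1}C.
$$
Because $U^*$ is unitary, the right-hand side is the transfer function of a unitary colligation acting on $\cF$, so the folklore quoted before the theorem immediately gives that $(\Psi_{P,U},\operatorname{Ran}P)$ is a contractive analytic function. This settles the forward direction.

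For the converse I would run the same computation backwards. Given a contractive analytic $\Psi:\mathbb D\to\cB(\cE)$, the folklore furnishes an auxiliary Hilbert space $\cH$ and a unitary colligation $\sbm{A&B\\C&D}:\cE\oplus\cH\to\cE\oplus\cH$ with $\Psi(z)=A+zB(I-zD)^{-1}C$. Setting $\cF:=\cE\oplus\cH$, letting $P$ be the orthogonal projection of $\cF$ onto $\cE$, and defining the unitary $U$ by $U^*:=\sbm{A&B\\C&D}$, the block identity of the forward direction yields $\Psi_{P,U}=\Psi$, which is exactly the assertion.

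For the ``moreover'' part it suffices to note that, by the very definition of rational inner function used in Section \ref{throughinner}, when $\cE$ is finite dimensional and $\Psi$ is rational inner one may take the colligation $\sbm{A&B\\C&D}$ to be a finite block unitary matrix, i.e.\ with $\cH$ finite dimensional; then $\cF=\cE\oplus\cH$ is finite dimensional and the construction above applies verbatim.

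I do not anticipate a serious obstacle: the computation is routine. The only points that need care are keeping track of which of $U$ and $U^*$ plays the role of the colligation, and correctly reading off the domains and codomains after the restriction to $\operatorname{Ran}P$; once these are fixed, the triangular-block inversion does the rest.
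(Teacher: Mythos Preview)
Your proposal is correct and follows essentially the same route as the paper: both set $U^*=\sbm{A&B\\C&D}$ with respect to $\cF=\operatorname{Ran}P\oplus\operatorname{Ran}P^\perp$ and identify $\Psi_{P,U}(z)$ with the classical transfer function $A+zB(I-zD)^{-1}C$, then invoke the folklore for contractivity and existence. The only cosmetic difference is that for the forward direction the paper asserts a direct computation of $I-\Psi_{P,U}(z)\Psi_{P,U}(z)^*\ge 0$ rather than citing the folklore, and for the ``moreover'' part the paper cites \cite[Section 11]{BKJFA2013} whereas you appeal to the definition in Section~\ref{throughinner}; both are valid.
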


\begin{proof}

We first note that for a model triple $(\cF,P,U)$, the analytic function
\begin{align}\label{OldReal}
\Psi_{P,U}(z)=P (I_\cF-zU^*P^\perp )^{-1} U^*P|_{\operatorname{Ran}P}
\end{align}is contractive because a straightforward computation yields that $I -  \Psi_{P,U}(z) \Psi_{P,U}(z)^*$ is a positive operator and hence $\Psi_{P,U}$ is a contractive analytic function.

To show that every contractive analytic function $\Psi:\mathbb D\to\cB(\cE)$ is of the form \eqref{OldReal}, we invoke the classical realization formula to obtain an auxiliary Hilbert space $\mathcal{H}$ and a unitary operator
$$\begin{bmatrix}
A&B\\C&D
\end{bmatrix}:\begin{bmatrix}\mathcal{E}\\{\mathcal{H}}\end{bmatrix}\to \begin{bmatrix}\mathcal{E}\\{\mathcal{H}}\end{bmatrix}$$
such that
\begin{align}\Psi(z)=A+zB(I-zD)^{-1}C \label{relformula}. \end{align}
By setting
$U^*=\begin{bmatrix}
A&B\\C&D
\end{bmatrix}$
and $P$ as the projection from $\mathcal{E}\oplus{\mathcal{H}}$ to $\mathcal{E}$, we get that
\begin{align}\label{RealUni}
\begin{bmatrix}
A&B\\C&D
\end{bmatrix}=\begin{bmatrix}
P U^*P|_{\operatorname{Ran}P}& P U^*P^\perp|_{\operatorname{Ran}P^\perp}\\
P^\perp U^*P|_{\operatorname{Ran}P}& P^\perp U^*P^\perp|_{\operatorname{Ran}P^\perp}
\end{bmatrix}.
\end{align}
With this new realization of the unitary colligation, we note from \eqref{relformula} that
\begin{align}
\Psi(z)=(PU^*P+zPU^*P^\perp(I_{\cL}-zP^\perp U^*P^\perp)^{-1}P^\perp U^*P)|_{\operatorname{Ran}P},
\end{align}
which, after a simplification, turns out to be the same as the formula stated in \eqref{OldReal}.

If $\cE$ is finite dimensional and $\Psi$ is rational inner, then it is well known (see for example \cite[Section 11]{BKJFA2013}) that the auxiliary space $\cH$ above can also be chosen to be finite dimensional. Since $\cF = \cE \oplus \cH$, the proof of Theorem \ref{Thm:RealForm} is complete. \end{proof}

\subsection{The passage between the two descriptions}
With the help of this new realization formula, we show here a passage between the inner function description and the linear pencils description of a distinguished variety.

\begin{thm}\label{Sec-Thm:PUtoPsi}
Let $\mathcal W$ be a distinguished variety with respect to $\mathbb D^2$.
 \begin{itemize}
 \item[(1)]If $(\cF, P, U)$ is a finite model triple that satisfies $\mathcal W\cap \mathbb D^2 = \cW_{P,U}\cap\mathbb D^2$, then $\mathcal W\cap\mathbb D^2=\cW_{\Psi_{P,U}}\cap\mathbb D^2.$
 \item[(2)] Let $\Psi$ be a rational matrix-valued inner function such that $\mathcal W\cap\mathbb D^2=\cW_{\Psi}\cap\mathbb D^2$. If $(\cF, P, U)$  a finite model triple such that $\Psi=\Psi_{P,U}$, then $\mathcal W\cap\mathbb D^2 = \cW_{P,U}\cap\mathbb D^2$.
  \end{itemize}
\end{thm}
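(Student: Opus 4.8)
\emph{Plan.} Both parts are instances of one identity: for \emph{every} finite model triple $(\cF,P,U)$,
\begin{equation*}
\cW_{P,U}\cap\mathbb D^2=\cW_{\Psi_{P,U}}\cap\mathbb D^2 .
\end{equation*}
Once this is in hand, part (2) is immediate (there $\Psi=\Psi_{P,U}$, so $\cW_\Psi=\cW_{\Psi_{P,U}}$ verbatim), and part (1) follows by chaining $\cW\cap\mathbb D^2=\cW_{P,U}\cap\mathbb D^2$ with the identity. Observe that $\Psi_{P,U}$ is rational inner in the sense of Section \ref{throughinner}, being the transfer function of the block unitary $U^*$ written against the decomposition $\cF=\operatorname{Ran}P\oplus\operatorname{Ran}P^\perp$ with $\cF$ finite dimensional; hence $\cW_{\Psi_{P,U}}$ makes sense, and for $(z_1,z_2)\in\mathbb D^2$, membership in $\cW_{\Psi_{P,U}}$ is the same as $z_2$ being an eigenvalue of the matrix $\Psi_{P,U}(z_1)=P(I_\cF-z_1U^*P^\perp)^{-1}U^*P|_{\operatorname{Ran}P}$.

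On the other side, $(z_1,z_2)\in\cW_{P,U}\cap\mathbb D^2$ says that there is a nonzero $h\in\cF$ with
\begin{equation*}
(P^\perp U+z_1z_2PU)h=z_1h\quad\text{and}\quad(U^*P+z_1z_2U^*P^\perp)h=z_2h ,
\end{equation*}
and, the product of the two pencils being $z_1z_2I_\cF$, the second equation follows from the first whenever $z_1\neq0$. Thus, for $z_1\neq 0$, the identity reduces to: $z_2$ is an eigenvalue of $\Psi_{P,U}(z_1)$ if and only if the first equation admits a nonzero solution.

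The correspondence between the two eigenvector problems is explicit, and I would verify it by a short computation. From $u\in\operatorname{Ran}P\setminus\{0\}$ with $\Psi_{P,U}(z_1)u=z_2u$, set $h=(I_\cF-z_1U^*P^\perp)^{-1}U^*Pu$; applying $U$ to $(I_\cF-z_1U^*P^\perp)h=U^*Pu$ yields $PUh=u$ and $P^\perp Uh=z_1P^\perp h$, whence $(P^\perp U+z_1z_2PU)h=z_1P^\perp h+z_1z_2u=z_1h$ since $Ph=\Psi_{P,U}(z_1)u=z_2u$, and $h\neq0$ because $U^*Pu\neq0$ and $I_\cF-z_1U^*P^\perp$ is invertible on $\mathbb D$. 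Conversely, from a nonzero $h$ solving the first pencil equation, applying $P$ and $P^\perp$ gives $z_1z_2PUh=z_1Ph$ and $P^\perp Uh=z_1P^\perp h$; setting $u=PUh$ one recovers $(I_\cF-z_1U^*P^\perp)h=U^*Pu$, hence $Ph=\Psi_{P,U}(z_1)u$, and dividing the first relation by $z_1$ gives $z_2u=Ph$, i.e. $\Psi_{P,U}(z_1)u=z_2u$.

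The one point that is not mere bookkeeping — and the place where $|z_1|<1$ is actually used — is the non-vanishing $u=PUh\neq0$ in the converse direction: were $PUh=0$, then $Uh=P^\perp Uh=z_1P^\perp h$, so $h=(z_1U^*P^\perp)^nh$ for all $n\geq1$, and $\|U^*P^\perp\|\le1$ together with $|z_1|<1$ forces $h=0$. Finally, the degenerate value $z_1=0$ must be handled on its own, since there the second pencil equation is not automatic; but it is trivial: $(0,z_2)\in\cW_{P,U}$ says $Uh\in\operatorname{Ran}P$ and $U^*Ph=z_2h$, which for $u=Uh$ is precisely $\Psi_{P,U}(0)u=PU^*Pu=z_2u$, and conversely. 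I expect the non-vanishing step to be the only genuinely substantive one.
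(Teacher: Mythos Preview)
Your proposal is correct and follows essentially the same approach as the paper: both reduce the theorem to the single identity $\cW_{P,U}\cap\mathbb D^2=\cW_{\Psi_{P,U}}\cap\mathbb D^2$ and establish it by an explicit eigenvector correspondence between the pencil problem and the eigenvalue problem for $\Psi_{P,U}$. The only cosmetic difference is a swap of roles: the paper case-splits on $z_2=0$ versus $z_2\neq0$ and works primarily from the second pencil equation $U^*(P+z_1z_2P^\perp)h=z_2h$ (which rearranges directly into the resolvent defining $\Psi_{P,U}$), taking $Pw$ as the $\Psi_{P,U}$-eigenvector; you case-split on $z_1$, work from the first pencil equation, and take $PUh$ instead. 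The paper's choice makes the non-vanishing step a one-liner (if $Pw=0$ then $z_2w=0$), whereas your choice requires the short iteration $h=(z_1U^*P^\perp)^nh$; conversely, your forward construction $h=(I-z_1U^*P^\perp)^{-1}U^*Pu$ is uniform in $z_2$ and in fact yields \emph{both} pencil equations for every $z_1\in\mathbb D$, so your separate $z_1=0$ case is only needed in the converse direction.
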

\begin{proof}
For (1), we first prove that, for a finite model triple $(\cF,P,U)$, we have the inclusion $\cW_{P,U}\cap \mathbb D^2\subseteq\cW_{\Psi_{P,U}}\cap \mathbb D^2$. Let $(z_1,z_2) \in \cW_{P,U}\cap \mathbb D^2$. If $z_2 = 0$, there is a non-zero vector $w$ such that $P^\perp Uw=z_1w$ and $U^*Pw=0$. Thus $Pw=0$ and hence $\Psi_{P,U}(z_1)w=0$ proving that $(z_1,0)$ is in $\cW_{\Psi_{P,U}}\cap \mathbb D^2$.

Let $(z_1,z_2)\in\cW_{P,U}\cap \mathbb D^2$ and $z_2\neq 0$. This means that there exists a non-zero vector $w$ in $\mathbb C^n$ such that
$$
(P^\perp +z_1z_2P)Uw=z_1w \text{ and }U^*(P+z_1z_2P^\perp)w=z_2w.
$$
Re-arranging the second equation, we get $z_2(I_\cF -z_1U^*P^\perp )w=U^*Pw$.
Since $z_1\in\mathbb D$ and $U^*P^\perp$ is a contraction, the matrix $(I_\cF -z_1U^*P^\perp )$ is invertible and hence
\begin{align}\label{CozInvrtbl}
z_2w=(I_\cF -z_1U^*P^\perp )^{-1} U^*Pw.
 \end{align}
Hence $Pw$ must be non-zero (otherwise \eqref{CozInvrtbl} implies that $z_2w=0$, which contradicts the fact that neither $z_2$ nor $w$ is zero). Therefore we have
 $$z_2Pw=P(I_\cF -z_1U^*P^\perp )^{-1} U^*Pw.$$
 Consequently $(\Psi_{P,U}(z_1)-z_2I_\cF )Pw=0$ or equivalently $\det(\Psi_{P,U}(z_1)-z_2I_\cF )=0$.

We now prove the other inclusion, i.e., $\cW_{\Psi_{P,U}}\cap \mathbb D^2\subset\cW_{P,U}\cap \mathbb D^2$. If $(z_1,0)$ is in $\cW_{\Psi_{P,U}}\cap \mathbb D^2$, then by definition $\det ( \Psi_{P,U}(z_1)) = 0$. So, there is a non-zero vector $w$ in the range of the projection $P$ (because that is the space on which $\Psi_{P,U}(z_1)$ acts) such that $0 = \Psi_{P,U}(z_1)w=P(I-z_1U^*P^\perp)^{-1}U^*Pw$. This means that $(I-z_1U^*P^\perp)^{-1}U^*Pw$ is in the range of $P^\perp$. Define the vector
$$ v:=P^\perp(I-z_1U^*P^\perp)^{-1}U^*Pw=(I-z_1U^*P^\perp)^{-1}U^*Pw.$$

This $v$ is a non-zero vector because otherwise $Pw$ would be $0$ contradicting that $w$ is a non-zero vector from the range of $P$. From the definition of $v$, we have $U(I-z_1U^*P^\perp)v=w$, which after multiplying by $P^\perp$ from left gives $P^\perp Uv=z_1v$. Clearly $U^*Pv=0$ because $v\in\operatorname{Ran}P^\perp$. Consequently, $v$ is in the kernel of both $(P^\perp U-z_1I)$ and $U^*P$, and hence $(z_1,0)$ is in $\cW_{P,U}\cap\mathbb D^2$.

Let us now suppose that $(z_1,z_2)\in\cW_{\Psi_{P,U}}\cap\mathbb D^2$ where $z_2\neq0$. Let $w$ be a non-zero vector such that $P(I-z_1U^*P^\perp)^{-1}U^*Pw=z_2w$. Let $v'=\frac{1}{z_2}P^\perp(I-z_1U^*P^\perp)^{-1}U^*Pw$ and define $v=w+v'$. Then $v$ is a non-zero vector such that
\begin{align*}
(I_{\cF}-z_1U^*P^\perp)^{-1}U^*Pv&=(I_{\cF}-z_1U^*P^\perp)^{-1}U^*Pw\\
&=P(I_{\cF}-z_1U^*P^\perp)^{-1}U^*Pw+P^\perp(I_{\cF}-z_1U^*P^\perp)^{-1}U^*Pw\\
&=z_2(w+v')=z_2v,
\end{align*}which implies that $U^*Pv=(I_{\cF}-z_1U^*P^\perp)z_2v$. We simplify this equation to obtain
\begin{align}\label{eqN1}
U^*(P+z_1z_2P^\perp)v=z_2v.
\end{align}We next multiply the above equation from left by $(P^\perp+z_1z_2P)U$ to obtain
\begin{align}
z_1z_2v=z_2(P^\perp+z_1z_2P)Uv.
\end{align}Now we use the fact that $z_2\neq0$ to arrive at
$$
(P^\perp+z_1z_2P)Uv=z_1v.
$$
This and \eqref{eqN1} together prove that $\cW_{P,U}\cap\mathbb D^2=\cW_{\Psi_{P,U}}\cap\mathbb D^2$.

The proof of (2) depends on noting that, given $\Psi$ and $(\cF,P,U)$ as in (2) $\Psi = \Psi_{P,U}$. Hence, by what is already proved above, $\cW_\Psi\cap\mathbb D^2=\cW_{P,U}\cap\mathbb D^2.$
\end{proof}
We note that for an arbitrary model triple $(\cF,P,U)$, it need not be true that $\cW_{P,U}=\cW_{\Psi_{P,U}}$. Take for example the model triple $(\cF,0,U)$, for which $\cW_{0,U}=\sigma(U)\times\mathbb C$, whereas since $\Psi_{0,U}\equiv0$, $\cW_{\Psi_{0,U}}=\mathbb C\times\{0\}$. Another example for $\cW_{P,U}\neq\cW_{\Psi_{P,U}}$ is $(\mathbb C^2,\sbm{1&0\\0&0}, I_{\mathbb C^2})$, as can be checked easily. However, the following is true for a model triple of arbitrary dimension.
\begin{thm}Let $(\cF,P,U)$ be any model triple (finite or infinite) such that $P\neq 0$, and let $\Psi_{P,U}:\mathbb D\to \cB(\operatorname{Ran}P)$ be the associated contractive analytic function. Consider the sets
$$
\cW_{P,U}=\{(z_1,z_2)\in\mathbb C^2:(z_1,z_2)\in\sigma_p(P^\perp U+zPU,U^*P+zU^*P^\perp)\},
$$where for a commuting pair $(A,B)$ of bounded operators, $\sigma_p(A,B)$ denotes the set of joint eigenvalues of $(A,B)$; and
   $$
\cW_\Psi=\{(z_1,z_2)\in\mathbb C^2: \operatorname{Ker}(\Psi(z_1)-z_2I_{\cE})\neq\{0\}\}
   $$where $\Psi:\mathbb D\to\cB(\cE)$ is contractive analytic function. Then
\begin{align}\label{maybeEmpty}
\cW_{P,U}\cap\mathbb D^2=\cW_{\Psi_{P,U}}\cap\mathbb D^2.
\end{align}Conversely, if $\Psi:\mathbb D\to\cB(\cE)$ is any contractive analytic function and $(\cF,P,U)$ is its realizing model triple, then
$$
\cW_\Psi\cap\mathbb D^2=\cW_{P,U}\cap\mathbb D^2.
$$
\end{thm}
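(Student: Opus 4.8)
The plan is to notice that the proof already given for Theorem~\ref{Sec-Thm:PUtoPsi}(1) is entirely operator-theoretic and never genuinely used $\dim\cF<\infty$: finite dimensionality entered only through the identification of the Taylor joint spectrum $\sigma_T$ with the set of joint eigenvalues, and of the condition ``$\det(\Psi(z_1)-z_2I)=0$'' with the condition ``$\operatorname{Ker}(\Psi(z_1)-z_2I)\neq\{0\}$''; but both of those identifications are simply written into the present statement. So I would re-run that proof verbatim, replacing $\sigma_T$ by $\sigma_p$ and determinants by kernels throughout. The only analytic ingredient that is still needed is that $I_\cF-z_1U^*P^\perp$ is invertible for $z_1\in\mathbb D$, which holds because $\|z_1U^*P^\perp\|\le|z_1|<1$, so the inverse is a norm-convergent Neumann series; this is the sole estimate used anywhere.

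Concretely, for $\cW_{P,U}\cap\mathbb D^2\subseteq\cW_{\Psi_{P,U}}\cap\mathbb D^2$ I would take a joint eigenvector $w\neq 0$ of $(P^\perp U+z_1z_2PU,\,U^*P+z_1z_2U^*P^\perp)$, rearrange the second eigenvector equation into $z_2(I_\cF-z_1U^*P^\perp)w=U^*Pw$, and when $z_2\neq0$ invert $I_\cF-z_1U^*P^\perp$ and apply $P$ to conclude that $Pw\neq0$ is an eigenvector of $\Psi_{P,U}(z_1)$ for the eigenvalue $z_2$; the case $z_2=0$ is handled by the vector $\eta:=PUw$, which lies in $\operatorname{Ran}P$, is non-zero (otherwise $w$ would be a unimodular eigenvector of the unitary $U$), and satisfies $(I_\cF-z_1U^*P^\perp)w=U^*\eta$ and hence $\Psi_{P,U}(z_1)\eta=Pw=0$. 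For the reverse inclusion I would start from a non-zero $w\in\operatorname{Ran}P$ with $\Psi_{P,U}(z_1)w=z_2w$, form $v:=w+\frac1{z_2}P^\perp(I_\cF-z_1U^*P^\perp)^{-1}U^*Pw$ (and the analogous range vector when $z_2=0$), observe $v\neq0$ since $Pv=w$, and verify via the algebraic identities $(P^\perp+z_1z_2P)(P+z_1z_2P^\perp)=z_1z_2I_\cF$ and $UU^*=I_\cF$ that $v$ is a joint eigenvector of $(P^\perp U+z_1z_2PU,\,U^*P+z_1z_2U^*P^\perp)$ with eigenvalue $(z_1,z_2)$. None of these steps refers to the dimension of $\cF$. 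The converse statement then drops out at once: given a contractive analytic $\Psi:\mathbb D\to\cB(\cE)$ and its realizing model triple $(\cF,P,U)$, Theorem~\ref{Thm:RealForm} gives $\Psi=\Psi_{P,U}$, so $\cW_\Psi=\cW_{\Psi_{P,U}}$, and the equality just proved yields $\cW_\Psi\cap\mathbb D^2=\cW_{P,U}\cap\mathbb D^2$.

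I do not expect a genuine obstacle; the work is bookkeeping. The point requiring most care is confirming that each candidate eigenvector produced above is actually non-zero and actually lies in $\operatorname{Ran}P$ (respectively $\operatorname{Ran}P^\perp$), which is precisely where the hypothesis $P\neq0$ gets used, and being explicit that—in contrast to the finite-dimensional Theorem~\ref{Sec-Thm:PUtoPsi}—one now obtains only a set-theoretic equality after intersecting with $\mathbb D^2$, because in infinite dimensions $\cW_{P,U}$ need not be an algebraic variety and the Taylor joint spectrum must genuinely be replaced by the joint point spectrum $\sigma_p$.
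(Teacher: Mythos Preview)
Your proposal is correct and is exactly the route the paper takes: its entire proof is the single sentence ``The proof is along the same line as that of Theorem~\ref{Sec-Thm:PUtoPsi},'' and you have simply unpacked that reference, replacing $\sigma_T$ by $\sigma_p$ and determinants by kernels as the statement dictates. In fact your treatment of the forward direction in the case $z_2=0$ is more careful than the paper's original argument in Theorem~\ref{Sec-Thm:PUtoPsi}: there the paper produces a vector $w$ with $Pw=0$ and asserts $\Psi_{P,U}(z_1)w=0$, but such a $w$ does not lie in $\operatorname{Ran}P$, the domain of $\Psi_{P,U}(z_1)$; your choice $\eta=PUw\in\operatorname{Ran}P$, non-zero because $U$ cannot have an eigenvalue in $\mathbb D$, genuinely exhibits a kernel vector and repairs this gap.
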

\begin{proof}
The proof is along the same line as that of Theorem \ref{Sec-Thm:PUtoPsi}.
\end{proof}
We end this section by noting that both the sides of \eqref{maybeEmpty} may very well be empty. For example if we choose
   $$
\cF=\begin{bmatrix}\mathbb C\\\mathbb C\end{bmatrix},\quad P=\begin{bmatrix} 1&0\\0&0 \end{bmatrix} \quad\text{and}\quad U= \begin{bmatrix} 1&0\\0&1 \end{bmatrix},
$$then
$$
(P^\perp U+zPU,U^*P+zU^*P^\perp)=\left(\begin{bmatrix}z&0\\0&1 \end{bmatrix},\begin{bmatrix}1&0\\0&z \end{bmatrix}\right).
$$Therefore, the joint eigenvalues of $(P^\perp U+zPU,U^*P+zU^*P^\perp)$ does not intersect the open bidisc. On the other hand, the corresponding ration inner function is
$$
\Psi_{P,U}(z)=P(I_{\mathbb C^2}-zU^*P^\perp)^{-1}U^*P|_{\operatorname{Ran}P}=1 \text{ for all } z\in\mathbb D.
$$
And hence $\operatorname{Ker}(\Psi_{P,U}(z_1)-z_2I_{\mathbb C^2})$ does not intersect the open bidisc either.

\section{Model triples and contractive analytic functions} \label{Category}
Two contractive analytic functions $(\Psi_1, \cE_1)$ and $(\Psi_2, \cE_2)$ are said to be {\em  unitarily equivalent} if there is a unitary operator $\tau: \cE_1 \rightarrow \cE_2$ such that $\tau \Psi_1(z) = \Psi_2(z) \tau$ for all $z \in \mathbb D$.

 It is easy to see that unitarily equivalent model triples give rise to unitarily equivalent contractive analytic functions. How about a converse? We show that given a contractive analytic function, a canonical choice of a model triple can be naturally made so that when two contractive analytic functions are unitarily equivalent, so are the associated canonical model triples, see Proposition \ref{Prop: coincidence}.

Consider the two categories
$$
\mathfrak{B}:=\{(\cF,P,U): \text{ $P$ is a projection and $U$ is a unitary on $\cF$}\}
$$with the morphisms between two elements $(\cF_1,P_1,U_1)$ and $(\cF_2,P_2,U_2)$ defined as a linear operator $\tau:\cF_1\to\cF_2$ that satisfies
\begin{align}\label{MorphismBCL}
\tau(P_1,U_1)=(P_2,U_2)\tau;
\end{align}
and
$$
\mathfrak C=\{(\Psi,\cE):\Psi:\mathbb D\to\cB(\cE) \text{ is analytic and contractive}\}
$$with the morphisms between two elements $(\Psi_1,\cE_1)$ and $(\Psi_2,\cE_2)$ defined as a linear operator $\tau:\cE_1\to\cE_2$ that satisfies
 \begin{align}\label{MorphismCAF}
 \tau\Psi_1(z)=\Psi_2(z)\tau \text{ for all }z\in\mathbb D.
 \end{align}

  Corresponding to an object $\chi=(\cF,P,U)$  in $\mathfrak B$, we have an object $\Psi_{P,U}$ of $
\mathfrak C$  given by Theorem \ref{Thm:RealForm}, i.e., $\Psi_{P,U}:\mathbb D\to\cB(\operatorname{Ran}P)$ is the function
 $$
 \Psi_{P,U}(z)=P(I_\cF-zU^*P^\perp)^{-1}U^*P|_{\operatorname{Ran}P}.
 $$
 Let $\chi_1=(\cF_1,P_1,U_1)$ and $\chi_2=(\cF_2,P_2,U_2)$ be two objects in $\mathfrak B$ and let $\tau$ be a morphism between them. It is easy to see from \eqref{MorphismBCL} that $\tau$ takes the following operator matrix form
 $$
 \tau=\begin{bmatrix} \tau_* & 0\\0& \tau_{**} \end{bmatrix}:
 \begin{bmatrix} \operatorname{Ran}P_1\\\operatorname{Ran}P_1^\perp\end{bmatrix}
 \to
 \begin{bmatrix} \operatorname{Ran}P_2\\\operatorname{Ran}P_2^\perp\end{bmatrix}.
 $$
 The linear transformation $\tau_*:\operatorname{Ran}P_1\to\operatorname{Ran}P_2$ induced by $\tau$ is easily seen to have the property
 $$
 \tau_*\Psi_{P_1,U_1}(z)=\Psi_{P_2,U_2}(z)\tau_* \text{ for all }z\in\mathbb D.
 $$
 Thus $\tau_*$ is a morphism between the objects $(\Psi_{P_1,U_1},\operatorname{Ran}P_1)$ and $(\Psi_{P_2,U_2},\operatorname{Ran}P_2)$. These morphisms will be referred to as the {\em induced} morphisms.
 \begin{proposition}
 The map $\mathfrak f:\mathfrak B\to\mathfrak C$ defined as
 $$
\mathfrak f:( (\cF,P,U),\tau)\mapsto(\Psi_{P,U},\tau_*)
 $$has the functorial properties, i.e.,
 \begin{enumerate}
 \item if $\iota:(\cF,P,U)\to(\cF,P,U)$ is the identity morphism, then the induced morphism $\iota_*:(\Psi_{P,U},\operatorname{Ran}P)\to(\Psi_{P,U},\operatorname{Ran}P)$ is the identity morphism; and
 \item if $\tau:\chi_1\to\chi_2$ and $\tau':\chi_2\to\chi_3$ are two morphisms in $\mathfrak B$, then
 $$
 (\tau'\circ\tau)_*=\tau'_*\circ\tau_*.
 $$
 \end{enumerate}
 Moreover, if $\chi_1$ and $\chi_2$ are unitarily equivalent via a unitary similarity $\tau$, then so are $\Psi_{P_1,U_1}$ and $\Psi_{P_2,U_2}$ via the induced unitary $\tau_*$.
 \end{proposition}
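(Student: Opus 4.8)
The plan is to lean on the block structure of morphisms in $\mathfrak B$ that was already isolated just before the statement. If $\tau:\chi_1\to\chi_2$ is a morphism, then $\tau P_1=P_2\tau$, and since $P_1,P_2$ are orthogonal projections this forces $\tau(\operatorname{Ran}P_1)\subseteq\operatorname{Ran}P_2$ and $\tau(\operatorname{Ran}P_1^\perp)\subseteq\operatorname{Ran}P_2^\perp$, i.e. $\tau$ is block-diagonal, $\tau=\begin{bmatrix}\tau_*&0\\0&\tau_{**}\end{bmatrix}$ relative to $\cF_i=\operatorname{Ran}P_i\oplus\operatorname{Ran}P_i^\perp$, with $\tau_*$ the induced morphism. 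Thus the two functorial properties reduce to the observation that extracting the $(1,1)$-corner of a block-diagonal operator sends identities to identities and respects composition, while the moreover part reduces to the fact that a block-diagonal unitary has unitary blocks.

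For (1): if $\iota=I_{\cF}$ then the two copies of $\cF$ carry the same decomposition and $\iota$ is the diagonal operator with both blocks equal to the identity, so the induced morphism is $\iota_*=I_{\operatorname{Ran}P}$, which is exactly the identity morphism of $(\Psi_{P,U},\operatorname{Ran}P)$ in $\mathfrak C$. For (2): given morphisms $\tau:\chi_1\to\chi_2$ and $\tau':\chi_2\to\chi_3$, the composite $\tau'\circ\tau$ again intertwines $(P_1,U_1)$ with $(P_3,U_3)$, hence is again block-diagonal, and multiplying the two block-diagonal matrices shows that its $(1,1)$-corner is $\tau'_*\circ\tau_*$; since $(\tau'\circ\tau)_*$ is by definition that corner, $(\tau'\circ\tau)_*=\tau'_*\circ\tau_*$. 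Together with (1), this shows $\mathfrak f$ is a functor.

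For the moreover part, let $\tau:\cF_1\to\cF_2$ be a unitary with $\tau(P_1,U_1)=(P_2,U_2)\tau$. Taking adjoints in $\tau P_1=P_2\tau$ and using $\tau^*=\tau^{-1}$ shows $\tau^*$ intertwines $P_2$ with $P_1$, so $\tau^*$ is block-diagonal as well, namely $\tau^*=\operatorname{diag}(\tau_*^*,\tau_{**}^*)$; then the identities $\tau^*\tau=I_{\cF_1}$ and $\tau\tau^*=I_{\cF_2}$ read off corner-by-corner as $\tau_*^*\tau_*=I_{\operatorname{Ran}P_1}$ and $\tau_*\tau_*^*=I_{\operatorname{Ran}P_2}$, so $\tau_*$ is unitary. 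Combining this with the intertwining relation $\tau_*\Psi_{P_1,U_1}(z)=\Psi_{P_2,U_2}(z)\tau_*$ for all $z\in\mathbb D$, recorded just above the statement, shows that $\tau_*$ is a unitary equivalence of the contractive analytic functions $(\Psi_{P_1,U_1},\operatorname{Ran}P_1)$ and $(\Psi_{P_2,U_2},\operatorname{Ran}P_2)$, as claimed.

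The argument is essentially bookkeeping; the only point that needs a moment of care is that passing to the $(1,1)$-corner commutes with taking adjoints of block-diagonal operators, which is precisely what lets unitarity descend from $\tau$ to $\tau_*$. I expect no genuine obstacle beyond keeping the two ambient decompositions of $\cF_1$ and $\cF_2$ straight.
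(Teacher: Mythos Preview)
Your proof is correct and is exactly the kind of straightforward verification the paper has in mind; in fact the paper omits the proof entirely, saying only ``The proof is straightforward and hence we omit it.'' Your argument fills in those details cleanly via the block-diagonal structure already recorded before the proposition.
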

 \begin{proof}
 The proof is straightforward and hence we omit it.
\end{proof}
It is natural to expect a converse of the `moreover' part in the above result, especially because by Theorem \ref{Thm:RealForm} there corresponds a model triple for every contractive analytic function. However, unlike the forward direction, this model triple is not uniquely determined by the contractive analytic function. For example, one can check that both the unitaries
\begin{align*}
\begin{bmatrix}
\begin{array}{c|cc}
0 & 0 & 1\\
\hline
1 & 0 & 0 \\
0 & 1 & 0
\end{array}
\end{bmatrix},
\begin{bmatrix}
\begin{array}{c|cc}
0 & 1 & 0\\
\hline
0 & 0 & 1 \\
1 & 0 & 0
\end{array}
\end{bmatrix}:\begin{bmatrix}\mathbb C\\\mathbb C^2\end{bmatrix}\to \begin{bmatrix}\mathbb C\\\mathbb C^2\end{bmatrix}
\end{align*}serve as a unitary colligation for the contractive function $z\mapsto z^2$. Consequently, the function $z\mapsto z^2$ has two distinct model triples. There is, nevertheless, a canonical choice of a model triple for a contractive analytic function.

For an object $(\Psi,\cE)$ in $\mathfrak C$, consider the associated de Branges--Rovnyak reproducing kernel
\begin{align}\label{deBrRovKer}
K^\Psi(z,w)=\frac{I_{\cE}-\Psi(z)\Psi(w)^*}{1-z\bar w}.
\end{align}
Let $\cH$ be a Hilbert space and $g:\mathbb D\to\cB(\cH,\cE)$ be a function such that
\begin{align}\label{Kolmo}
K^\Psi (z,w)=g(z)g(w)^*.
\end{align}This is so called a Kolmogorov decomposition of the kernel $K^\Psi$. For a quick and easy reference, the function $g:\mathbb D\to\cB(\cH_\Psi,\cE)$ satisfying the Kolmogorov decomposition \eqref{Kolmo} will be called a {\em Kolmogorov function} for $\Psi$. Now we execute a classical {\em lurking isometry} program.
Using the definition \eqref{deBrRovKer} of $K^\Psi$ and after a rearrangement of the terms one arrives at
$$
\langle e,f\rangle_{\cE} + \langle \bar w g(w)^*e, \bar z g(z)^* f\rangle_\cH = \langle \Psi(w)^*e,\Psi(z)^*f\rangle_\cE+\langle g(w)^* e, g(z)^* f \rangle_\cH
$$for every $z,w\in\mathbb D$ and $e,f\in\cE$. This readily implies that the map
\begin{align*}
\mathfrak u:\overline{\operatorname{span}}\left\{\begin{bmatrix}I_\cE\\ \bar z g(z)^* \end{bmatrix}f:z\in\mathbb D \text{ and }f\in\cE\right\}\to
\overline{\operatorname{span}}\left\{\begin{bmatrix}\Psi(z)^*\\  g(z)^* \end{bmatrix}f:z\in\mathbb D \text{ and }f\in\cE\right\}
\end{align*}defined densely by
\begin{align}\label{PartialUni}
\mathfrak u:\sum_{j=1}^N\begin{bmatrix}I_\cE\\  \bar z_jg(z_j)^* \end{bmatrix}f_j\mapsto\sum_{j=1}^N\begin{bmatrix}\Psi(z_j)^*\\  g(z_j)^* \end{bmatrix}f_j
\end{align}is a unitary. We wish to extend this partially defined unitary to whole of $\cE\oplus\cH$, which we can do if the orthocomplements of the domain and
codomain of $\mathfrak u$ in $\cE\oplus\cH$ have the same dimension; if not, we can add an infinite dimensional Hilbert space say, $\cR$ to $\cH$ so that
$\mathfrak u$ has a unitary extension to $\cE\oplus\cH\oplus\cR$. We pause here to note that there is a {\em minimal} choice of the auxiliary Hilbert space
$\cH$, viz.,
$$
\cH_\Psi:=\overline{\operatorname{span}}\{g(z)^*e:z\in\mathbb D \text{ and }e\in \cE\},
$$and that this is actually isomorphic to the defect space of $M_{\Psi}^*$. Indeed, from the Kolmogorov decomposition \eqref{Kolmo} of $K^\Psi$, we see that
\begin{align*}
\langle (I_{H^2(\cE)}-M_\Psi M_\Psi^*)\mathbb S_w e,\mathbb S_z f\rangle_{H^2(\cE)}=\langle g(w)^*e,g(z)^*f \rangle_{\cH_\Psi},
\end{align*}where $\mathbb S$ is the Szeg\"o kernel for $\mathbb D$. This in particular implies that the map densely defined as
$$
(I_{H^2(\cE)}-M_\Psi M_\Psi^*)^\frac{1}{2}\sum_{j=1}^N\mathbb S_{w_j}e_j \mapsto \sum_{j=1}^Ng(w_j)^*e_j
$$is a unitary.

For a contractive analytic function $(\Psi,\cE)$, we denote by $\cF_\dag$ the minimal space containing $\cE\oplus \cH_\Psi$ to which the partially defined unitary $\mathfrak u$ as in \eqref{PartialUni} can be
extended. Let $U_\dag$ be a unitary operator on $\cF_\dag$ that extends $\mathfrak u$ and $P_\dag$ be the orthogonal projection of $\cF_\dag$ onto $\cE$.

\begin{definition}
 A model triple $(\cF_\dag,P_\dag,U_\dag)$ obtained from a contractive analytic function $(\Psi,\cE)$ as above will be referred to as a
 {\em canonical model triple} for $(\Psi,\cE)$.
\end{definition}

Before we move on, let us return to the example of the function $z\mapsto z^2$. If we do the computations above for this function, then it is straightforward that the canonical model triple is as follows. The space $\cF_\dag$ is $\mathbb C^3 (=\mathbb C \oplus \mathbb C^2)$, the projection is the one dimensional projection from $\mathbb C^3$ onto the first component and the unitary is the $3 \times 3$ unitary matrix corresponding to the permutation $(231)$.

\begin{remark}
While the space $\cF_\dag$ and the projection $P_\dag$ in a canonical model triple for a contractive analytic function $(\Psi,\cE)$ are
uniquely determined by $\Psi$, the unitary operator $U_\dag$ is not uniquely determined because a priori, there can be many unitary extensions of the partially
defined unitary $\mathfrak u$ as in \eqref{PartialUni} and all such unitary extensions qualify to be a member of the canonical model triple.
\end{remark}

\begin{remark}

There is a well-known family of contractive analytic functions $(\Psi,\cE)$ for which a minimal choice of the space $\cF_\dag$ is $\cE\oplus\cH_\Psi$
itself, viz., the matrix-valued rational inner functions. Indeed, for a matrix-valued rational inner function $(\Psi,\mathbb C^n)$ the space $\cH_\Psi$ is isomorphic to the {\em model space}
$H^2(\mathbb C^n)\ominus \Psi\cdot H^2(\mathbb C^n)$, which is known to be finite dimensional; see for example \cite[Section 11]{BKJFA2013}. Therefore, for a matrix-valued rational inner function, the partially defined unitary $\mathfrak u$ as in \eqref{PartialUni} acts on a subspace of the finite dimensional
Hilbert space $\mathbb C^n\oplus (H^2\ominus \Psi\cdot H^2(\mathbb C^n))$, and therefore has a unitary extension to the space. Consequently, {\em a matrix-valued rational inner
function has a finite canonical model triple.}
\end{remark}
\begin{proposition} \label{Prop: coincidence}
If two contractive analytic functions $(\Psi_1,\cE_1)$ and $(\Psi_2,\cE_2)$ are unitarily equivalent, then their canonical model triples are also unitarily equivalent.
\end{proposition}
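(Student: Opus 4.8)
The plan is to transport the entire lurking isometry construction that produces a canonical model triple forward along the intertwining unitary $\tau\colon\cE_1\to\cE_2$. The first step is to upgrade $\tau$ to a unitary between the minimal Kolmogorov spaces. Since $\tau$ is unitary and $\tau\Psi_1(z)=\Psi_2(z)\tau$, one also has $\tau\Psi_1(z)^*=\Psi_2(z)^*\tau$, and hence $\tau K^{\Psi_1}(z,w)\tau^*=K^{\Psi_2}(z,w)$ for the de Branges--Rovnyak kernels in \eqref{deBrRovKer}. If $g_1$ and $g_2$ are the Kolmogorov functions used to build the two canonical model triples, then $z\mapsto\tau g_1(z)$ is again a Kolmogorov function for $K^{\Psi_2}$ whose generated space is still $\cH_{\Psi_1}$ (because $\tau^*$ maps $\cE_2$ onto $\cE_1$), so the essential uniqueness of minimal Kolmogorov decompositions yields a unitary $\sigma\colon\cH_{\Psi_1}\to\cH_{\Psi_2}$ determined densely by $\sigma g_1(z)^*e=g_2(z)^*\tau e$; equivalently $\sigma g_1(z)^*=g_2(z)^*\tau$ and $\tau g_1(z)=g_2(z)\sigma$ for every $z\in\mathbb D$. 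One could also obtain $\sigma$ by observing that $M_\tau$ intertwines $M_{\Psi_1}$ with $M_{\Psi_2}$, hence intertwines their defect operators $I-M_{\Psi_i}M_{\Psi_i}^*$, and therefore restricts to a unitary between the corresponding defect spaces, which are exactly the spaces $\cH_{\Psi_i}$.

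The second step is to check that $\tau\oplus\sigma\colon\cE_1\oplus\cH_{\Psi_1}\to\cE_2\oplus\cH_{\Psi_2}$ intertwines the partially defined unitaries $\mathfrak u_1$ and $\mathfrak u_2$ of \eqref{PartialUni}. From $\sigma g_1(z)^*=g_2(z)^*\tau$ one computes $(\tau\oplus\sigma)\sbm{I_{\cE_1}\\ \bar z g_1(z)^*}f=\sbm{I_{\cE_2}\\ \bar z g_2(z)^*}(\tau f)$, and from this together with $\tau\Psi_1(z)^*=\Psi_2(z)^*\tau$ one computes $(\tau\oplus\sigma)\sbm{\Psi_1(z)^*\\ g_1(z)^*}f=\sbm{\Psi_2(z)^*\\ g_2(z)^*}(\tau f)$. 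Since $\tau$ is onto, it follows that $\tau\oplus\sigma$ maps the domain of $\mathfrak u_1$ onto the domain of $\mathfrak u_2$ and the range of $\mathfrak u_1$ onto the range of $\mathfrak u_2$, and that $\mathfrak u_2(\tau\oplus\sigma)=(\tau\oplus\sigma)\mathfrak u_1$ on the dense domain, hence everywhere. In particular $\tau\oplus\sigma$ carries the orthocomplement in $\cE_1\oplus\cH_{\Psi_1}$ of the domain of $\mathfrak u_1$ onto the orthocomplement in $\cE_2\oplus\cH_{\Psi_2}$ of the domain of $\mathfrak u_2$, and similarly for the ranges; so the pairs of defect numbers that govern how much must be adjoined to $\cE_i\oplus\cH_{\Psi_i}$ to reach $\cF_{i,\dag}$ agree, whence the auxiliary summands $\cR_1,\cR_2$ (both trivial, or both the fixed infinite dimensional space) are unitarily isomorphic, say via $\rho$.

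The third step assembles the global unitary. Starting from any canonical model triple $(\cF_{1,\dag},P_{1,\dag},U_{1,\dag})$ for $\Psi_1$, put $W:=(\tau\oplus\sigma)\oplus\rho\colon\cF_{1,\dag}\to\cF_{2,\dag}$ and $U_{2,\dag}:=WU_{1,\dag}W^*$. Because $W$ carries $\cE_1$ onto $\cE_2$, it intertwines $P_{1,\dag}$ with $P_{2,\dag}$; because $W$ restricted to $\cE_1\oplus\cH_{\Psi_1}$ equals $\tau\oplus\sigma$, which intertwines $\mathfrak u_1$ with $\mathfrak u_2$, the unitary $U_{2,\dag}$ is a unitary extension of $\mathfrak u_2$ on $\cF_{2,\dag}$. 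Thus $(\cF_{2,\dag},P_{2,\dag},U_{2,\dag})$ is a legitimate canonical model triple for $\Psi_2$, and $W$ exhibits it as unitarily equivalent to the chosen canonical model triple for $\Psi_1$; the symmetric argument works starting from $\Psi_2$.

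The point I expect to be genuinely delicate is this last step rather than the first two. Since $\cF_\dag$ is obtained by possibly enlarging $\cE\oplus\cH_\Psi$ and $U_\dag$ is only \emph{some} unitary extension of $\mathfrak u$ (as the remark preceding the proposition stresses), the statement cannot be read as saying that an arbitrary canonical triple for $\Psi_1$ is equivalent to an arbitrary canonical triple for $\Psi_2$; one must check that the two enlargements have equal size and then build a matching $U_{2,\dag}$ by conjugating the given $U_{1,\dag}$. The passage from $\tau$ to $\sigma$ and the verification that $\tau\oplus\sigma$ respects the data defining $\mathfrak u$ are routine, though slightly tedious, manipulations powered by the single identity $\tau K^{\Psi_1}(z,w)\tau^*=K^{\Psi_2}(z,w)$.
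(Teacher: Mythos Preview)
Your proof is correct and follows essentially the same path as the paper's: build a unitary $\hat\tau\colon\cH_{\Psi_1}\to\cH_{\Psi_2}$ from the intertwining relation $\tau K^{\Psi_1}(z,w)=K^{\Psi_2}(z,w)\tau$, then transport a given canonical triple for $\Psi_1$ to one for $\Psi_2$ by conjugating with $(\tau\oplus\hat\tau)\oplus\rho$ and verify that the conjugated unitary extends $\mathfrak u_2$. You are slightly more explicit than the paper in checking that $\tau\oplus\sigma$ intertwines the partial unitaries $\mathfrak u_1,\mathfrak u_2$ and in matching the defect dimensions (the paper simply uses $I_\cR$ on the same auxiliary summand), but the core argument is the same.
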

\begin{proof}
Let $\tau:\mathcal{E}_1\to\mathcal{E}_2$ be a unitary such that
\begin{align}\label{Coin_CAF}
\tau\Psi_1(z)=\Psi_2(z)\tau \text { for all } z\in{\mathbb{D}}.
\end{align}Note that if $K^1$ and $K^2$ are the de-Branges--Rovnyak reproducing
kernels associated to $(\Psi_1,\cE_1)$ and $(\Psi_2,\cE_2)$, then we have
$$\tau K^1(z, w)= K^2(z, w)\tau\text{ for all $z,w\in{\mathbb D}$}.$$
For each $j=1,2$, let $g_j:\mathbb D\to \cB(\cH_{\Psi_j},\cE_j)$ be a Kolmogorov function for $\Psi_j$. There is a unitary operator
$\hat\tau:\cH_{\Psi_1}\to\cH_{\Psi_2}$ induced by $\tau$ as
\begin{align*}
\hat{\tau}: g_1(z)^*e \longmapsto  g_2(z)^*\tau e \text{ for all } z\in{\mathbb D} \text{ and }e\in\cE_1 .
\end{align*}
Let $\mathfrak u_j$ denote the partially defined unitaries as in \eqref{PartialUni} corresponding to $\Psi_j$. If $(\cF_{\dag1},P_{\dag1},\mathfrak U_{\dag1})$ is a canonical model triple for $\Psi_1$, then define
$$
(\cF_{\dag2},P_{\dag2},\mathfrak U_{\dag2}):=(\tilde\tau\cF_{\dag1},\tilde\tau P_{\dag1}\tilde\tau,\tilde\tau \mathfrak U_{\dag1}\tilde\tau)
$$ where if $\cF_{\dag1}=\cE_1\oplus\cH_{\Psi_1}$, then $\tilde{\tau}=\sbm{\tau & 0\\ 0 & \hat{\tau}}$; if $\cF_{\dag1}=\cE_1\oplus\cH_{\Psi_1}\oplus\cR$, then $\tilde\tau=\sbm{\tau & 0\\ 0 & \hat{\tau}}\oplus I_\cR$. To show that $(\cF_{\dag2},P_{\dag2},\mathfrak U_{\dag2})$ is a canonical model triple for $\Psi_2$,
all we have to show is that the unitary $\mathfrak U_{\dag2}$ extends $\mathfrak u_2$, which is established in the following computation:
\begin{align*}
\mathfrak U_{\dag2}\begin{bmatrix}I_{\cE_2}\\  \bar zg_2(z)^*\end{bmatrix}e=
\tilde\tau\mathfrak U_{\dag1}\tilde\tau^*\begin{bmatrix}I_{\cE_2}\\ \bar zg_2(z)^*\end{bmatrix}e=
\tilde\tau\mathfrak{U_1}\begin{bmatrix}I_{\cE_1}\\  \bar zg_1(z)^*\end{bmatrix}\tau^*e=
\tilde\tau\begin{bmatrix}\Psi_1(z)^*\\  g_1(z)^*\end{bmatrix}\tau^*e&=
\begin{bmatrix}\Psi_2(z)^*\\  g_2(z)^*\end{bmatrix}e.
\end{align*}
\end{proof}

With this, we leave the distinguished boundaries with respect to the bidisc and move on to the symmetrized bidisc.

\section{Distinguished varieties in the symmetrized bidisc}\label{S:SymmBDisc}

\subsection{The Pal and Shalit description}
The {\em open symmetrized bidisc} and its closure are
$$ \mathbb G = \{ (z_1 + z_2, z_1z_2) : |z_1| < 1 \mbox{ and } |z_2| < 1\}$$
and
 $$\Gamma = \{ (z_1 + z_2, z_1z_2) : |z_1| \le 1 \mbox{ and } |z_2| \le 1\}$$
respectively. The {\em distinguished boundary} $b\Gamma$ is
$$b\Gamma = \{ (z_1 + z_2, z_1 z_2) : |z_1| = |z_2| = 1\}.$$

The new characterization of distinguished varieties with respect to the bidisc influences the characterization of distinguished varieties with respect to the open symmetrized bidisc.

Remembering that the first coordinate is the sum and the second coordinate is the product of bidisc elements, a typical point of the symmetrized bidisc will be denoted by $(s,p)$; the same convention will be used to denote a typical point of $\mathbb C^2$, while dealing with the distinguished varieties with respect to this domain.

A substantial refinement of the Pal--Shalit \cite{Pal-Shalit} characterization is obtained. Given a matrix $F$ with $\nu(F)<1$, the set
\begin{align*}
\mathcal W_{F}=\{(s,p)\in\mathbb{C}^2: \det\left(F^*+pF-sI\right)=0\}
\end{align*}
is a distinguished variety with respect to $\mathbb{G}$. Pal and Shalit, by an ingenious application of the concept of the fundamental operator of a $\Gamma$-contraction, proved in Theorem 3.5 of \cite{Pal-Shalit} that given a distinguished variety $\mathcal{W}$ with respect to $\mathbb{G}$, there is a matrix $F$ with $\nu(F)\leq{1}$ such that $\mathcal{W}\cap\mathbb G=\mathcal{W}_{F}\cap\mathbb G$. We improve it. The refinement that we present is as follows.
\begin{thm} \label{ourGammaDV}
Let $(\cF, P, U)$ be a finite model triple such that $\nu(PU+U^*P^\perp)<1$. Let $F=PU+U^*P^\perp$. Then $\mathcal W_{F}$ is a distinguished variety with respect to $\mathbb G$. Conversely, if $\mathcal W$ is a distinguished variety with respect to $\mathbb{G}$, then there is a finite model triple  $(\cF,P,U)$ such that $\mathcal W\cap\mathbb G=\mathcal W_{F}\cap\mathbb G$ with $F=PU+U^*P^\perp$.
\end{thm}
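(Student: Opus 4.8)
The forward direction is immediate: by hypothesis $\nu(F)=\nu(PU+U^{*}P^{\perp})<1$, and it is recorded just before the theorem that $\nu(F)<1$ forces $\mathcal W_{F}$ to be a distinguished variety with respect to $\mathbb G$. All the content lies in the converse, and the plan there is to descend to the bidisc via the symmetrization map $\pi(z_{1},z_{2})=(z_{1}+z_{2},z_{1}z_{2})$ and then apply Theorem \ref{ourDV}.

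First I would record the elementary pullback identities $\pi^{-1}(\mathbb G)=\mathbb D^{2}$, $\pi^{-1}(\Gamma)=\overline{\mathbb D}^{2}$ and $\pi^{-1}(b\Gamma)=\mathbb T^{2}$, each of which holds because the two roots of $t^{2}-st+p$ are precisely the coordinates of any point of $\pi^{-1}(s,p)$. Given a distinguished variety $\mathcal W=Z(\xi)$ with respect to $\mathbb G$, the set $\widetilde{\mathcal W}:=\pi^{-1}(\mathcal W)=Z(\xi\circ\pi)$ is then an algebraic variety, and the three identities yield $\widetilde{\mathcal W}\cap\mathbb D^{2}=\pi^{-1}(\mathcal W\cap\mathbb G)$ (nonempty because $\mathcal W$ is distinguished) together with
$$\widetilde{\mathcal W}\cap\partial\mathbb D^{2}=\pi^{-1}(\mathcal W\cap\partial\mathbb G)=\pi^{-1}(\mathcal W\cap b\Gamma)=\widetilde{\mathcal W}\cap\mathbb T^{2},$$
so that $\widetilde{\mathcal W}$ is a distinguished variety with respect to $\mathbb D^{2}$. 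By the converse half of Theorem \ref{ourDV} there is then a finite model triple $(\cF,P,U)$ with $\widetilde{\mathcal W}\cap\mathbb D^{2}=\cW_{P,U}\cap\mathbb D^{2}$, and I would set $F=PU+U^{*}P^{\perp}$, so that $F^{*}=U^{*}P+P^{\perp}U$.

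The heart of the proof will be the identity $\pi(\cW_{P,U}\cap\mathbb D^{2})=\mathcal W_{F}\cap\mathbb G$; combined with $\pi(\widetilde{\mathcal W}\cap\mathbb D^{2})=\mathcal W\cap\mathbb G$ (surjectivity of $\pi|_{\mathbb D^{2}}$ onto $\mathbb G$) this gives $\mathcal W\cap\mathbb G=\mathcal W_{F}\cap\mathbb G$. For the inclusion $\subseteq$: if $(P^{\perp}U+z_{1}z_{2}PU)h=z_{1}h$ and $(U^{*}P+z_{1}z_{2}U^{*}P^{\perp})h=z_{2}h$, then adding these and writing $s=z_{1}+z_{2}$, $p=z_{1}z_{2}$ gives $(F^{*}+pF-sI)h=0$, so $(s,p)\in\mathcal W_{F}$. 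For $\supseteq$: given $(s,p)\in\mathcal W_{F}\cap\mathbb G$ and $h\neq 0$ with $(F^{*}+pF)h=sh$, I would write $F^{*}+pF=\Phi(p)+\Psi(p)$, where $\Phi(p)=P^{\perp}U+pPU$ and $\Psi(p)=U^{*}P+pU^{*}P^{\perp}$ are the BCL functions, which satisfy $\Phi(p)\Psi(p)=pI=\Psi(p)\Phi(p)$. When $p\neq 0$, $\Phi(p)$ is invertible with $\Psi(p)=p\,\Phi(p)^{-1}$, so multiplying $(\Phi(p)+p\,\Phi(p)^{-1})h=sh$ by $\Phi(p)$ gives $(\Phi(p)-z_{1}I)(\Phi(p)-z_{2}I)h=0$, where $z_{1},z_{2}$ are the roots of $t^{2}-st+p$; thus $\pi(z_{1},z_{2})=(s,p)$ and $z_{1},z_{2}\in\mathbb D$ because $(s,p)\in\mathbb G$. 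Hence $\Phi(p)$ has $z_{1}$ or $z_{2}$ as an eigenvalue, say $\Phi(p)v=z_{1}v$ with $v\neq 0$; then $\Psi(p)v=p\,z_{1}^{-1}v=z_{2}v$, so $(z_{1},z_{2})$ is a joint eigenvalue of $(\Phi(p),\Psi(p))$, i.e.\ $(z_{1},z_{2})\in\cW_{P,U}\cap\mathbb D^{2}$ with $\pi(z_{1},z_{2})=(s,p)$. The case $p=0$ needs separate handling: from $(U^{*}P+P^{\perp}U)h=sh$ one checks that $v':=U^{*}Ph$ satisfies $P^{\perp}Uv'=0$ and $U^{*}Pv'=sv'$, giving $(0,s)\in\cW_{P,U}$ if $v'\neq 0$, whereas if $v'=0$ then $P^{\perp}Uh=sh$ and $U^{*}Ph=0$, giving $(s,0)\in\cW_{P,U}$; in either case the relevant point maps under $\pi$ to $(s,0)$.

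The step I expect to be the main obstacle is the reverse inclusion just sketched: producing a genuine joint eigenvector of the commuting pair $(\Phi(p),\Psi(p))$ from a vector that lies only in the kernel of the single symmetric combination $F^{*}+pF$. The factorization $\Phi(p)^{2}-s\Phi(p)+pI=(\Phi(p)-z_{1}I)(\Phi(p)-z_{2}I)$ is the decisive device, and the degenerate case $p=0$, where $\Phi(0)$ is no longer invertible, must be disposed of by hand. A secondary point requiring care is confirming that $\widetilde{\mathcal W}$ is genuinely a \emph{distinguished} variety of the bidisc, which rests entirely on the three pullback identities for $\mathbb G$, $\Gamma$ and $b\Gamma$ under $\pi$.
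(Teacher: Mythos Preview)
Your proof is correct and follows the same strategy as the paper: pull $\mathcal W$ back to a distinguished variety $\widetilde{\mathcal W}=\pi^{-1}(\mathcal W)$ in $\mathbb D^2$, apply Theorem \ref{ourDV} to obtain a finite model triple $(\cF,P,U)$, and then identify $\pi(\cW_{P,U}\cap\mathbb D^2)$ with $\mathcal W_F\cap\mathbb G$ for $F=PU+U^*P^\perp$. The only difference is in this last identification: the paper invokes the polynomial spectral mapping theorem to get $\pi\bigl(\sigma_T(\Phi(z),\Psi(z))\bigr)=\sigma_T(F^*+zF,\,zI)$ in one stroke, whereas you carry it out by hand via the factorization $\Phi(p)^2-s\Phi(p)+pI=(\Phi(p)-z_1I)(\Phi(p)-z_2I)$ together with a separate treatment of $p=0$ --- a perfectly valid substitute that makes the argument more self-contained at the cost of the extra case analysis.
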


\begin{proof} The forward direction follows from the theorem of Pal and Shalit \cite[Theorem 3.5]{Pal-Shalit}.

For the converse direction, let $\mathcal{W}$ be a distinguished variety with respect to $\mathbb G$, there exists a distinguished variety $\mathcal{V}$ with respect to $\mathbb D^2$ such that $\pi(\mathcal{V})=\mathcal{W}$. Indeed,
$$\mathcal{V} = \{ (z,w) : \pi(z_1,z_2) \in \mathcal{W} \}.$$
We know by Theorem \ref{ourDV} that there exists a finite model triple $(\cF,P,U)$ such that
\begin{align*}
\mathcal{V}\cap\mathbb D^2=\bigcup_{z\in\mathbb D}\sigma_T(P^\perp U+zPU,U^*P+zU^*P^\perp).
\end{align*}Therefore
\begin{align*}
\mathcal{W}\cap\mathbb G=\pi(\mathcal{V}\cap\mathbb D^2)=\bigcup_{z\in\mathbb D}\pi(\sigma_T(P^\perp U+zPU,U^*P+zU^*P^\perp)),
\end{align*}
which, in view of the polynomial spectral mapping theorem, is the same as
\begin{align*}
\mathcal{W}\cap\mathbb G=\bigcup_{z\in\mathbb D}\sigma_T(P^\perp U+zPU +U^*P+zU^*P^\perp, zI_\cF) = \bigcup_{z \in \mathbb D} \sigma_T(F^* + zF , zI_\cF) \end{align*}
where $F = P U + U^* P^\perp$.

Let $(s,p)\in \mathcal{W}\cap\mathbb G$. Then, there exists a non-zero vector $w$ and a $z$ in $\mathbb D$ such that
\begin{align*}
(F^*+ z F)w =sw \text{ and }zw=pw.
\end{align*}
So, $z=p$ and $(F^* + pF - s)w = 0$. So, $ \det(F^*+pF-sI_{\cF})=0$.

Conversely, if $(s,p)$ satisfies $ \det(F^*+pF-sI_{\cF})=0$, then there is a non-zero vector $w$ such that $(F^* + pF - s)w = 0$ showing that
$$(s,p) \in \sigma_T(F^* + pF , pI_\cF) \subset \bigcup_{z \in \mathbb D} \sigma_T(F^* + zF , zI_\cF).$$
This proves the theorem. \end{proof}

Unlike the case of the bidisc, the condition $\nu(PU+U^*P^\perp)<1$ is not necessary as we shall see.

Why is this result a refinement? It is a refinement because while every operator of the form $PU+U^*P^\perp$ has numerical radius no larger than $1$, the converse is not true, i.e., there are $F$ with $\nu(F)\leq{1}$ but $F$ can not written in the form $PU+U^*P^\perp$.

\begin{lemma} \label{FbutnotPU}
If the two eigenvalues $\lambda_1$ and $\lambda_2$ of a given $A\in{M_2(\mathbb{C})}$ satisfy
\begin{align}\label{Condi_mod}
|\lambda_1|\neq{|\lambda_2|},
\end{align}
then $A$ can not be written as $PU+U^*P^\perp$ for any projection $P$ and unitary $U$.

The converse is not true, i.e., equality in \eqref{Condi_mod} does not necessarily imply that $A$ must be of the form $PU+U^*P^\perp$ for some projection $P$ and unitary $U$.
\end{lemma}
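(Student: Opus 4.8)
The statement has two parts: first, that $|\lambda_1| \neq |\lambda_2|$ forces $A$ to \emph{not} be of the form $PU + U^\perp P^\perp$ (wait—the correct form is $PU + U^*P^\perp$); and second, a counterexample showing $|\lambda_1| = |\lambda_2|$ is not sufficient. I'll plan each separately.

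For the first part, the plan is to extract a spectral obstruction on operators of the form $F = PU + U^*P^\perp$. The key observation is that $F$ is a "partial sum" of two contractions that interlock via the projection, and in fact $\nu(F) \le 1$ (this is noted right after the statement of Theorem~\ref{ourGammaDV}, and follows from Lemma~\ref{NUM RAD} applied at $z=1$, since $PU + U^*P^\perp$ is the value at $z=1$ of the pencil $z \mapsto U^*P^\perp + zPU$ up to relabeling). But $\nu(F) \le 1$ alone does not give $|\lambda_1| = |\lambda_2|$, so I need more. The sharper fact I would aim for: if $F = PU + U^*P^\perp$ with $P$ a rank-one projection on $\mathbb{C}^2$ (the rank-zero and rank-two cases make $F$ a unitary, where $|\lambda_1| = |\lambda_2| = 1$ trivially), then $\det F$ is unimodular times something forced, or better, the two eigenvalues of $F$ satisfy $\lambda_1 \lambda_2 = \det(PU + U^*P^\perp)$ and I should compute this determinant explicitly. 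Writing $P = \sbm{1 & 0 \\ 0 & 0}$ without loss of generality (conjugating by a unitary changes neither the eigenvalues nor the form, as it conjugates $U$) and $U = \sbm{a & b \\ c & d}$ unitary, one gets $PU + U^*P^\perp = \sbm{a & b \\ 0 & 0} + \sbm{0 & 0 \\ \bar b & \bar d} = \sbm{a & b \\ \bar b & \bar d}$. This matrix is the sum of the first row of $U$ and the conjugate of the second row placed as a second row — and crucially it has a Hermitian-like structure in the off-diagonal ($b$ above, $\bar b$ below). The eigenvalues of $\sbm{a & b \\ \bar b & \bar d}$ have product $a\bar d - |b|^2$ and sum $a + \bar d$. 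Using unitarity of $U$ ($|a|^2 + |b|^2 = 1$, $|c|^2 + |d|^2 = 1$, $|a|^2+|c|^2 = 1$ so $|b| = |c|$, and $a\bar c + b \bar d = 0$), I would compute $|\det F| = |a\bar d - |b|^2|$ and compare with the eigenvalue moduli. The cleanest route: show the characteristic polynomial has a built-in symmetry forcing $|\lambda_1| = |\lambda_2|$, perhaps by exhibiting that $F$ is similar (via a diagonal unitary) to a matrix whose characteristic polynomial is "palindromic up to a unimodular twist," i.e., $\lambda^2 \overline{\chi_F(1/\bar\lambda)} = c\, \chi_F(\lambda)$ for unimodular $c$; such polynomials have roots in modulus-reciprocal pairs, and combined with $\nu(F) \le 1$ (hence $|\lambda_i| \le 1$), reciprocal pairs with both moduli $\le 1$ must have equal moduli. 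I expect the determinant identity $|\det(PU + U^*P^\perp)| = 1$ to hold when... no, that's false in general; instead I anticipate $\det F = \overline{d} a - |b|^2$ and $\overline{\det F} = \bar a d - |b|^2$, and the relation $|\lambda_1|^2 |\lambda_2|^2 = |\det F|^2$ together with the trace being $a + \bar d$ should, via the unitarity constraints, pin down the modulus relation.

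For the converse (that $|\lambda_1| = |\lambda_2|$ does not force the form $PU + U^*P^\perp$), the plan is simply to produce an explicit $2\times 2$ matrix $A$ with two eigenvalues of equal modulus — say both equal to $0$, or both on the unit circle, or both equal to some $r < 1$ — which nonetheless cannot be written as $PU + U^*P^\perp$. A natural candidate: take $A$ nilpotent with $A \neq 0$, e.g. $A = \sbm{0 & 2 \\ 0 & 0}$; its eigenvalues are $0, 0$ (equal modulus) but $\|A\| = 2 > 1$ so $\nu(A) \ge 1$... actually $\nu(\sbm{0 & 2 \\ 0 & 0}) = 1$, borderline. Better to take $A = \sbm{0 & t \\ 0 & 0}$ with $t > 2$, so $\nu(A) = t/2 > 1$, which immediately rules out the form since every $PU + U^*P^\perp$ has numerical radius $\le 1$. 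That's the cheapest counterexample. Alternatively, if one wants a counterexample with $\nu(A) \le 1$, take $A$ with eigenvalues $0, 0$ but in a form that violates the specific structure $\sbm{a & b \\ \bar b & \bar d}$ derived above — e.g. a nilpotent matrix $\sbm{0 & 1 \\ 0 & 0}$ has $\nu = 1/2 < 1$ but to be of the form $\sbm{a & b \\ \bar b & \bar d}$ with that matrix equal to $\sbm{0&1\\0&0}$ would need $\bar b = 0$ and $b = 1$, impossible. So $\sbm{0 & 1 \\ 0 & 0}$ is a clean counterexample with small numerical radius. I would present this one.

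**Main obstacle.** The genuinely delicate step is the first part: proving rigorously that $F = PU + U^*P^\perp$ forces $|\lambda_1| = |\lambda_2|$. The reduction to $P = \sbm{1&0\\0&0}$ and the explicit form $F = \sbm{a & b \\ \bar b & \bar d}$ with $U$ unitary is routine, but extracting the modulus equality from the unitarity relations requires care — I expect to need the full set of constraints ($|b| = |c|$, $|a|=|d|$ which follows from $|a|^2 + |b|^2 = 1 = |c|^2 + |d|^2 = |b|^2 + |d|^2$, and the phase relation $a\bar c = -b\bar d$) and then to verify that the characteristic polynomial $\lambda^2 - (a + \bar d)\lambda + (a\bar d - |b|^2)$ has the reciprocal-modulus root structure. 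A slick finish would be to observe $|a| = |d|$ and then show directly that $|a\bar d - |b|^2| = $ something symmetric, or to diagonally conjugate $F$ by $\mathrm{diag}(1, \omega)$ for suitable unimodular $\omega$ to make it genuinely self-adjoint or genuinely of palindromic type; getting that phase right is where the real work lies.
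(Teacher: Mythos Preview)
Your plan contains a computational error that derails both parts. When you reduce to $P=\sbm{1&0\\0&0}$ and $U=\sbm{a&b\\c&d}$, you write
\[
PU+U^*P^\perp=\sbm{a&b\\0&0}+\sbm{0&0\\\bar b&\bar d}=\sbm{a&b\\\bar b&\bar d},
\]
but the second summand is $P^\perp U^*$, not $U^*P^\perp$. The correct computation is
\[
U^*P^\perp=\sbm{\bar a&\bar c\\\bar b&\bar d}\sbm{0&0\\0&1}=\sbm{0&\bar c\\0&\bar d},\qquad
PU+U^*P^\perp=\sbm{a&b+\bar c\\0&\bar d},
\]
which is \emph{upper triangular}. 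This is exactly the paper's route: the eigenvalues are read off as $a$ and $\bar d$, and for any $2\times 2$ unitary $|a|=|d|$, so $|\lambda_1|=|\lambda_2|$ immediately. Your elaborate programme with palindromic characteristic polynomials and phase conjugations is aimed at the wrong target; worse, the matrix $\sbm{a&b\\\bar b&\bar d}$ that you are analysing does \emph{not} have eigenvalues of equal modulus in general (take $U=\frac{1}{\sqrt2}\sbm{1&1\\-1&1}$, giving $\sbm{1/\sqrt2&1/\sqrt2\\1/\sqrt2&1/\sqrt2}$ with eigenvalues $\sqrt2$ and $0$), so that line of attack would have failed.

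The same error invalidates your chosen counterexample for the converse. The matrix $\sbm{0&1\\0&0}$ \emph{is} of the form $PU+U^*P^\perp$: with $P=\sbm{1&0\\0&0}$ and $U=\sbm{0&\zeta\\\zeta&0}$ for $\zeta=e^{i\pi/3}$ one gets $\zeta+\bar\zeta=1$ in the $(1,2)$ entry and zeros elsewhere. (The paper records this explicitly just after the lemma.) Your argument that $\sbm{0&1\\0&0}$ cannot match $\sbm{a&b\\\bar b&\bar d}$ is irrelevant because that is not the correct structural form. Your alternative $\sbm{0&t\\0&0}$ with $t>2$ does work via $\nu>1$, but it is the less interesting example; the paper instead takes $A=\sbm{\lambda&1\\0&\lambda}$ with $0<|\lambda|\le 1/2$, which has $\nu(A)\le1$, and shows by the (correct) upper-triangular reduction that any realisation would force $w_{12}+\overline{w_{21}}=0$ and hence $A$ diagonalisable, a contradiction.
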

\begin{proof}
To prove the first part, suppose $A=PU+U^*P^\perp$ for some projection $P$ and unitary $U$. First note that $P$ cannot be a  trivial projection because otherwise $A$ is a unitary matrix which contradicts \ref{Condi_mod}. Since P is a non-trivial projection, there exists a unitary matrix $U_1$ such that
\begin{align*}
P=U_1^*\begin{bmatrix}
1 & 0\\0 & 0
\end{bmatrix}U_1
\text { and }
P^{\perp}=U_1^*\begin{bmatrix}
0 & 0\\0& 1
\end{bmatrix}U_1.
\end{align*}
So,
\begin{align*}
A=U_1^*\begin{bmatrix}
1 & 0\\0 & 0
\end{bmatrix}U_1U+U^*U_1^*\begin{bmatrix}
0 & 0\\0& 1
\end{bmatrix}U_1.
\end{align*}
From the above equation we have,
\begin{align*}
U_1AU_1^*=\begin{bmatrix}
1 & 0\\0 & 0
\end{bmatrix}U_1UU_1^*+ (U_1UU_1^*)^*\begin{bmatrix}
0 & 0\\0& 1
\end{bmatrix}
\end{align*}
Let the unitary matrix $U_1UU_1^*$ be denoted by $W=[w_{ij}]$. Then
\begin{align}\label{U1andW}
U_1AU_1^*=\begin{bmatrix}
1 & 0\\0 & 0
\end{bmatrix}W+ W^*\begin{bmatrix}
0 & 0\\0& 1
\end{bmatrix} = \begin{bmatrix}
w_{11} & w_{12}+\overline{w_{21}}\\0 & \overline{w_{22}}
\end{bmatrix}
\end{align}
Thus, $w_{11}=\lambda_1$ and $\overline{w}_{22}=\lambda_2$ (or the other way, the treatment of which is the same) which implies that $\lambda_1$ and $\lambda_2$ have to agree in moduli because these are the diagonal entries of a $2 \times 2$ unitary matrix. This contradicts  \eqref{Condi_mod}.

For the converse part, we consider the matrices $A=\sbm{\lambda&1\\0&\lambda}$, $0<|\lambda|\leq 1/2$. Suppose $A=PU+U^*P^\perp$ for some $P$ and $U$. Since $A$ is not a unitary, $P$ cannot be trivial. Similar arguments as above yields that there is a unitary $U_1$ and a unitary $W$ such that \eqref{U1andW} holds.

Since $A$ and $U_1AU_1^*$ have the same eigenvalues and the eigenvalues of an upper-triangular matrix occur along the diagonal, we see that the $2 \times 2$ unitary matrix $W$ has to be of the form $\sbm{ \lambda & x \\ -\bar{x} & \bar{\lambda} }$ for some $x \in \mathbb C$. But this means that $w_{12}+\overline{w_{21}} = 0$ showing that $U_1AU_1^*$ and hence $A$ is a diagonal matrix which is a contradiction.
\end{proof}
It is of independent interest to note that for $A=\sbm{0&1\\0&0}$  there are $2\times2$  choices of $P$ and $U$ such that $A=PU+U^*P^\perp$, viz.,
$$
\begin{bmatrix}
  0&1\\0&0
\end{bmatrix}=\begin{bmatrix}
  1&0\\0&0
\end{bmatrix}\begin{bmatrix}
  0&\zeta\\\zeta&0
\end{bmatrix}+\begin{bmatrix}
  0&\overline{\zeta}\\\overline{\zeta}&0
\end{bmatrix}\begin{bmatrix}
  0&0\\0&1
\end{bmatrix}
$$where $\zeta$ is either $1/2+i\sqrt{3}/2$ or its conjugate. Similarly, if we start with the matrix $\sbm{\lambda&0\\0&\lambda}$ where $\lambda$ comes from the unit disc $\mathbb D$, then we have
$$
\begin{bmatrix}
  \lambda &0\\0&\lambda
\end{bmatrix}=\begin{bmatrix}
  1&0\\0&0
\end{bmatrix}\begin{bmatrix}
  \lambda &(1-|\lambda|^2)^{\frac{1}{2}}\\-(1-|\lambda|^2)^{\frac{1}{2}}&\overline{\lambda}
\end{bmatrix}+\begin{bmatrix}
  \overline{\lambda }&-(1-|\lambda|^2)^{\frac{1}{2}}\\(1-|\lambda|^2)^{\frac{1}{2}}&\lambda
\end{bmatrix}\begin{bmatrix}
  0&0\\0&1
\end{bmatrix}.
$$

\begin{remark}
Starting with a $2\times2$ matrix $A$ with $\nu(A)\leq 1$, it is natural to wonder if there is a $2\times2$ choice of $P$ and $U$ in Theorem \ref{ourGammaDV} for $\cW_A$. Consider for $0<|\lambda|<1$, the matrix $A=\sbm{\lambda&0\\0&0}$. We show that there is no choice of 2-dimensional $P$ and $U$ such that $\cW_A=\cW_F$ with $ F = PU+U^*P^\perp$. It is easy to compute that
$$
\cW_A=\{(\overline{\lambda}+\lambda p,p):p\in\mathbb C\} \cup \{(0,p) : p\in\mathbb C\}.
$$
Suppose on the contrary that there is a $2\times2$ choice of $P$ and $U$ such that $\cW_A=\cW_F$ where $F = PU+U^*P^\perp$. As above, get a unitary $U_1$ such that $P=U_1^*\sbm { 1 & 0 \\ 0 & 0 } U_1$. So, $PU + U^* P^\perp = U_1^* (\sbm{1&0\\0&0}U_1U + U^* U_1^* \sbm{0&0\\0&1}) U_1$. Denote $U_1U$ by $W$ and if $W=[w_{ij}]$, then
$$
F=U_1^* \left( \begin{bmatrix}
1 & 0\\0 & 0
\end{bmatrix}W+W^*\begin{bmatrix}
0 & 0\\0 & 1
\end{bmatrix} \right) U_1= U_1^*  \begin{bmatrix}
  w_{11}&w_{12}+\overline{w_{21}}\\0&\overline{w_{22}}
\end{bmatrix}   U_1.
$$
Since determinant is a unitary invariant, we have
\begin{align}
\notag\det(F^*+pF-sI)&= \det\begin{bmatrix}
  \overline{w_{11}}+ w_{11} p-s& p(w_{12}+\overline{w_{21}})\\ (\overline{w_{12}}+w_{21})& w_{22}+ \overline{w_{22}}p -s\end{bmatrix}\\\label{poly}
&=(\overline{w_{11}}+ w_{11} p-s)(w_{22}+ \overline{w_{22}}p -s)-p|\overline{w_{12}}+w_{21}|^2.
\end{align}If $\cW_{PU+U^*P^\perp}=\cW_A$, then $(0,p)$ would be a solution of $\det(F^*+pF-sI)=0$ for all $p\in\mathbb C$. Thus equating the coefficients we get
$$
\overline{w_{11}} w_{22}=0 \text{ and } w_{11}w_{22}+\overline{w_{11}w_{22}}=|\overline{w_{12}}+w_{21}|^2.
$$
This shows that either $w_{11}=0$ or $w_{22}=0$ and $w_{21}=-\overline{w_{12}}$. Since $WW^*=I$, we get both $w_{11}=0$ and $w_{22}=0$. Putting all this in \eqref{poly} we get $\det(F^*+pF-sI)=s^2$. Note that the points $(\overline{\lambda}+\lambda p,p)$ are in $\cW_A$ but clearly they do not satisfy $s^2=0$. Consequently, $\cW_A \neq \cW_{PU+U^*P^\perp}$ for any $2\times 2$ choice of $P$ and $U$.

It is a matter of easy computation to check that if $P_1=\sbm{1&0\\0&0}$ and for $\lambda\in\mathbb D$, $U_\lambda=\sbm{\lambda &(1-|\lambda|^2)^{\frac{1}{2}}\\-(1-|\lambda|^2)^{\frac{1}{2}}&\overline{\lambda}}$, then
$$
P=\begin{bmatrix}
  P_1&0\\0&P_1
\end{bmatrix} \text{ and }U=\begin{bmatrix}
  U_\lambda &0\\0&U_{0}
\end{bmatrix}
$$ satisfies $\cW_{\sbm{\lambda&0\\0&0}}=\cW_{PU+U^*P^\perp}$. We believe that this is the minimal choice of $P$ and $U$ for $\sbm{\lambda&0\\0&0}$, i.e., there is no $3$-dimensional choice of $P$ and $U$ for $\cW_{\sbm{\lambda&0\\0&0}}$ either.
\end{remark}
We give two examples to show that in the case  $\nu(PU+U^{*}P^{\perp})=1$, nothing can be said about whether the associated variety is distinguished or not.

\begin{example} \label{royal}
Consider the model triple
\begin{align*}
 \cF=\begin{bmatrix} \mathbb C\\\mathbb C\end{bmatrix},
 \quad P=\begin{bmatrix}
     1 & 0\\
     0& 0
    \end{bmatrix}\quad \text{and}\quad
    U=\begin{bmatrix}
     1 & 0\\
     0 & 1
    \end{bmatrix}.
  \end{align*}
Then $\nu(PU+U^{*}P^{\perp})=1$ and the set
\begin{align*}
\cW_{PU+U^*P^\perp}&=\{(s,p)\in\mathbb C^2: \det\left((U^*P+P^\perp U)+p(PU+U^*P^\perp)-sI\right)=0\}\\
&=\{(s,p)\in{\mathbb{C}}^2: \left(1+p-s\right)^2=0\}
\end{align*}
is not a distinguished variety because the point $(1+1/2, 1/2)$ is in $\cW\cap(\partial\Gamma\setminus b\Gamma)$.
\end{example}
\begin{example}\label{E:RV}
Consider the model triple
\begin{align*}
  \cF=\begin{bmatrix} \mathbb C\\\mathbb C\end{bmatrix},\quad P=\begin{bmatrix}
     1 & 0\\
     0& 0
    \end{bmatrix} \quad\text{and}\quad
    U=\begin{bmatrix}
     0 & 1\\
     1 & 0
    \end{bmatrix}.
  \end{align*} Then $\nu(PU+U^{*}P^{\perp})=1$ but nevertheless the set
\begin{align*}
\mathcal W_{PU+U^*P^\perp}&=\{(s,p)\in\mathbb C^2: \det\left((U^*P+P^\perp U)+p(PU+U^*P^\perp)-sI\right)=0\}\\
&=\{(s,p)\in\mathbb{C}^2: s^2-4p=0\}
\end{align*}
is a distinguished variety. This is the so called the {\em royal} variety $\{ (2z, z^2) : z \in \mathbb C\}$ which plays a special role in the understanding of geometry and function theory on the symmetrized bidisc, see \cite{ALY}.
\end{example}
\begin{remark}
The royal variety is a special distinguished variety. It has played a significant role in understanding the complex geometry of the domain, cf.\ \cite{ALY_MAMS}. This, coupled with Example \ref{royal}, makes it natural to wonder whether there is an example of a projection $P$ and a unitary $U$ with $\nu(PU+U^*P^\perp)=1$ such that $\cW_{PU+U^*P^\perp}$ is a distinguished variety but not the royal variety. One can easily find an answer by slightly perturbing Example \ref{E:RV}: set
$$
(\cF,P,U)=\left(\begin{bmatrix} \mathbb C^2\\ \mathbb C^2\end{bmatrix},\begin{bmatrix}
P_1&0\\0&P_1
\end{bmatrix},
\begin{bmatrix}
U&0\\0&iU
\end{bmatrix} \right),
$$where $P_1=\sbm{1&0\\0&0}$ and $U=\sbm{0&1\\1&0}$. It is easy to compute that
$$\cW_{PU+U^*P^\perp}=\{(0,-z^2):z\in\mathbb C\}\cup\{(2z,z^2):z\in\mathbb C\}.$$
Is it possible to find an example of a projection $P$ and a unitary $U$ with $\nu(PU+U^*P^\perp)=1$ such that $\cW_{PU+U^*P^\perp}$ is a distinguished variety that does not contain the royal variety? While it seems plausible, we do not have a concrete example to establish it.
\end{remark}

\section{ One dimensional distinguished varieties in the Polydisc}
The dimension of an algebraic variety in $\mathbb C^d$ is the maximal dimension of tangent spaces at regular points, see page 22 of \cite{GH}.

One dimensional distinguished varieties in the polydisc play an important role for extremal Nevanlinna Pick problems, see Theorem 1.3 in \cite{DS}.
 We need the Berger--Coburn--Lebow theorem in its full generality for characterizing them.

Consider $d $ $(\geq 3)$ commuting isometries $V_1, V_2, \ldots, V_d$ on a Hilbert space $\mathcal{H}$ and the Wold decomposition of the product
$V:=V_1V_2\ldots  V_d$. This means that up to unitary identification $\mathcal{H}=H^2(\cD_{V^*})\oplus \mathcal{H}_{u}$ and
$$
V=\begin{bmatrix}
M_z&0\\ 0& W\end{bmatrix}
:\begin{bmatrix} H^2(\cD_{V^*})\\ \mathcal{H}_{u}\end{bmatrix}\to\begin{bmatrix} H^2(\cD_{V^*})\\ \mathcal{H}_{u}
\end{bmatrix},
$$where $W=V|_{\cH_u}$ is unitary. Berger, Coburn and Lebow \cite[Theorem 3.1]{BCL} proved that for each $j=1,2,\dots,d$, there exist projection operators $P_j$ and unitary operators $U_j$ in $\cB(\cD_{V^*})$, and commuting unitary operators $W_1,W_2,\dots,W_d$ in $\cB(\cH_u)$ such that under the same unitary identification
$$
V_j=\begin{bmatrix} M_{(P_j^\perp+zP_j)U_j}&0\\ 0& W_j\end{bmatrix}:\begin{bmatrix} H^2(\cD_{V^*})\\ \mathcal{H}_{u}\end{bmatrix}\to
\begin{bmatrix} H^2(\cD_{V^*})\\ \mathcal{H}_{u}\end{bmatrix}.
$$

\begin{definition}
A {\em model tuple} is a tuple
$$\chi = (\mathcal{F}, P_1, P_2, \ldots, P_d, U_1, U_2, \ldots, U_d)$$ where $\mathcal{F}$ is a Hilbert space, $P_i$ are projections and $U_i$ are unitary operators in $\cB(\cF)$ such that at least one of the projections $P_i$ is non-trivial and for each $z\in{\mathbb{D}}$
$$\Phi_i(z):= P_i^\perp U_i+zP_iU_i$$are commuting operators. These functions will be called the {\em Berger--Coburn--Lebow (BCL) functions}. If $\mathcal F$ is finite dimensional, then the model tuple is called a finite model tuple.

A model tuple will be called $pure$ if $\Phi_1(z)\Phi_2(z)\ldots\Phi_d(z)=zI \text{ for all } z\in{\mathbb{D}}.$
\end{definition}
When we were discussing the case $d=2$ in the earlier sections, the projection $P_2$ and the unitary $U_2$ were so chosen ($ P_2 = U_1^*P_1^\perp U_1$ and $U_2 = U_1^*$) that this property of being pure was automatic.

For notational convenience, we shall use the notations $\cP:=(P_1, P_2,\dots, P_d)$ and $\cU:=(U_1, U_2, \dots, U_d)$.
Given a finite model tuple $(\cF,\cP,\cU)$, consider the set
 \begin{align}\label{DVPDisc}
 \cW_{\cP, \cU}=\{(z_1,z_2,\dots,z_d)\in\mathbb{C}^d:(z_1,z_2,\dots,z_d)\in\sigma_T(\Phi_1(\underline{z}), \Phi_2(\underline{z}), \ldots ,\Phi_d(\underline{z} ))\}
 \end{align}
where $\underline{z}:=z_1z_2\dots z_d$.

\begin{definition}
An algebraic variety $\cW$ in $\mathbb C^d$ is said to be symmetric if
\begin{align}\label{symm_Poly}
(z_1,z_2,\dots, z_d)\in\cW \text{ if and only if }(\frac{1}{\bar z_1},\frac{1}{\bar z_2},\dots, \frac{1}{\bar z_d})\in\cW_{\cP,\cU}
\end{align}
for non-zero $z_1,z_2,\dots, z_d$.
\end{definition}

\begin{thm}\label{T:thm2}
For a finite pure model tuple $(\cF,\cP,\cU)$, $\cW_{P,U}$ as defined in \eqref{DVPDisc} is a one dimensional symmetric algebraic variety in $\mathbb C^d$.
 Moreover, the following are equivalent:
\begin{itemize}
\item[(i)] $\cW_{\cP,\cU}$ is a distinguished variety with respect to $\mathbb D^d$;
\item[(ii)]  For all $z$ in the open unit disc $\mathbb D$,
\begin{equation} \label{Sec-StrangeConditions1_Poly}
 \nu( P_j^\perp U_j+zP_jU_j) < 1  \text{ for all  } j=1,\dots, d ;
 \end{equation} and
\item[(iii)]
\begin{equation}\label{Variety_Cont}
\cW_{\cP,\cU}\subset \mathbb D^d\cup\mathbb T^d\cup\mathbb E^d.
\end{equation}
\end{itemize} Moreover, $\cW_{\cP,\cU}$ can be written as
\begin{align}\label{BDiskFoli}
\cW_{\cP,\cU}=\bigcup_{z\in\mathbb C}\sigma_T(P_1^\perp U_1+zP_1U_1,\dots, P_d^\perp U_d+zP_dU_d).
\end{align}

Conversely, if $\cW$ is any one dimensional distinguished variety with respect to $\mathbb D^d$, then there exists a finite pure model tuple $(\cF,\cP,\cU)$  such that
$$
\cW\cap\mathbb D^d=\cW_{\cP,\cU}\cap\mathbb D^d.
$$
\end{thm}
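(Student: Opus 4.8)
The plan is to mirror the proof of Theorem \ref{ourDV}, adapted to $d$ variables.

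\textbf{Forward direction.} That $\cW_{\cP,\cU}$ is an algebraic variety follows from Pal's determinantal device: for $\vec\lambda=(\lambda_1,\dots,\lambda_d)\in\mathbb C^d$ set $\xi_{\vec\lambda}(z_1,\dots,z_d)=\det\big[\sum_{j=1}^d\lambda_j(\Phi_j(\underline z)-z_jI)\big]$, a polynomial since the entries of $\Phi_j(\underline z)$ are polynomials in $\underline z=z_1\cdots z_d$; then $\cW_{\cP,\cU}=Z(\{\xi_{\vec\lambda}\})$, the nontrivial inclusion coming from the fact that if every $\xi_{\vec\lambda}$ vanishes at a point, then---since the commuting tuple $(\Phi_1(\underline z)-z_1I,\dots,\Phi_d(\underline z)-z_dI)$ has only finitely many joint eigenvalues---some joint eigenvalue $(\alpha_1,\dots,\alpha_d)$ satisfies $\sum_j\lambda_j\alpha_j=0$ on an uncountable set of $\vec\lambda$, forcing $\alpha_1=\dots=\alpha_d=0$. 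Purity of the model tuple gives $\Phi_1(\underline z)\cdots\Phi_d(\underline z)=\underline z\,I$, so every point of $\sigma_T(\Phi_1(w),\dots,\Phi_d(w))$ has coordinate product $w$; this yields the description \eqref{BDiskFoli}, and since the map $(z_1,\dots,z_d)\mapsto z_1\cdots z_d$ is finite-to-one on $\cW_{\cP,\cU}$ and onto $\mathbb C$, the variety is one dimensional. Symmetry follows from $\Phi_j(w)^{*-1}=(P_j^\perp+\bar w^{-1}P_j)U_j=\Phi_j(1/\bar w)$ applied with $w=\underline z$, exactly as when $d=2$.

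For the equivalence I run the cycle $(i)\Rightarrow(ii)\Rightarrow(iii)\Rightarrow(i)$. If $\nu(\Phi_j(z_0))=1$ for some $z_0\in\mathbb D$ then, $\Phi_j(z_0)$ being a contraction, the equality case of Cauchy--Schwarz gives a unit eigenvector of $\Phi_j(z_0)$ with unimodular eigenvalue; restricting the commuting family $\Phi_1(z_0),\dots,\Phi_d(z_0)$ to the corresponding eigenspace produces a joint eigenvalue with unimodular $j$-th coordinate and coordinate product $z_0\in\mathbb D$, i.e.\ a point of $\cW_{\cP,\cU}\cap(\partial\mathbb D^d\setminus\mathbb T^d)$, contradicting $(i)$. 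For $(ii)\Rightarrow(iii)$, take $(z_1,\dots,z_d)\in\cW_{\cP,\cU}$ and split on $|\underline z|$: if $|\underline z|<1$ then $|z_j|\le\nu(\Phi_j(\underline z))<1$ for every $j$; if $|\underline z|=1$ then each $\Phi_j(\underline z)$ is unitary so $|z_j|=1$ for every $j$; if $|\underline z|>1$ apply symmetry and the first case to $(1/\bar z_1,\dots,1/\bar z_d)$. For $(iii)\Rightarrow(i)$, it remains only to check $\cW_{\cP,\cU}\cap\partial\mathbb D^d=\cW_{\cP,\cU}\cap\mathbb T^d$ (immediate from $(iii)$, since a boundary point has all moduli $\le 1$ and one modulus $=1$, hence cannot lie in $\mathbb D^d$ or $\mathbb E^d$) and that $\cW_{\cP,\cU}\cap\mathbb D^d$ is nonempty; for the latter, $\sigma_T(P_1^\perp U_1,\dots,P_d^\perp U_d)=\sigma_T(\Phi_1(0),\dots,\Phi_d(0))$ is a nonempty subset of $\cW_{\cP,\cU}$, and being a joint spectrum of commuting contractions whose product is $0$, each of its points has a zero coordinate and hence, by $(iii)$, lies in $\mathbb D^d$.

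\textbf{Converse direction.} Let $\cW$ be a one dimensional distinguished variety with respect to $\mathbb D^d$. Following the proof of Theorem \ref{ourDV}, I first invoke the Riemann-surface measure lemma for one dimensional distinguished varieties of the polydisc---the $d$-variable analogue of Lemma \ref{AMmeasure}---to obtain a finite positive measure $\mu$ on $\partial\cW=\cW\cap\mathbb T^d$ whose bounded point evaluations are indexed densely by $\cW\cap\mathbb D^d$. Then $(M_{z_1},\dots,M_{z_d})$ on $H^2(\mu)$ is a commuting tuple of isometries; density of the bounded point evaluations forces each $M_{z_j}$ to be pure, and since $(z_1\cdots z_d)^n H^2(\mu)\subseteq z_j^n H^2(\mu)$ the product $V=M_{z_1}\cdots M_{z_d}=M_{z_1\cdots z_d}$ is a pure isometry. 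Because $\cW$ is one dimensional, $z_1\cdots z_d$ vanishes only on a finite subset of $\cW$, and a degree count in the spirit of the one in Theorem \ref{ourDV} shows that $VH^2(\mu)$ has finite codimension, so $\cD_{V^*}$ is finite dimensional. The full Berger--Coburn--Lebow theorem (Theorem 3.1 of \cite{BCL}), applied to $(M_{z_1},\dots,M_{z_d})$ whose unitary part is absent, then produces a finite model tuple $(\cF,\cP,\cU)$ with $\cF=\cD_{V^*}$, and the absence of a unitary part is precisely purity of the model tuple. Finally, a point $(z_1,\dots,z_d)\in\mathbb D^d$ lies in $\cW$ if and only if it gives a bounded point evaluation for $H^2(\mu)$, if and only if $(\bar z_1,\dots,\bar z_d)$ is a joint eigenvalue of $(M_{z_1}^*,\dots,M_{z_d}^*)$; by the $d$-variable version of Lemma \ref{jemultipliers} this is the same as $(z_1,\dots,z_d)\in\sigma_T(\Phi_1(\underline z),\dots,\Phi_d(\underline z))$, i.e.\ $(z_1,\dots,z_d)\in\cW_{\cP,\cU}\cap\mathbb D^d$. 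Hence $\cW\cap\mathbb D^d=\cW_{\cP,\cU}\cap\mathbb D^d$.

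\textbf{The main obstacle.} The forward direction is essentially a bookkeeping extension of the $d=2$ argument; the only mild novelty is passing from an ordinary unimodular eigenvalue of $\Phi_j(z_0)$ to a genuine joint eigenvalue in $(i)\Rightarrow(ii)$. The substantive work is all on the converse side: producing the correct Riemann-surface measure $\mu$ for a one dimensional distinguished variety of $\mathbb D^d$ (together with the purity of $M_{z_1}\cdots M_{z_d}$), and proving that $\cD_{V^*}$ is finite dimensional---this is the single place where the hypothesis that $\cW$ be one dimensional is genuinely used, as it is what forces the coordinate ring of $\cW$ to be suitably finite over $\mathbb C[z_1\cdots z_d]$.
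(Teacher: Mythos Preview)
Your proposal is correct and follows the paper's approach closely. The forward direction is essentially identical; your argument for one-dimensionality via the finite-to-one surjection $(z_1,\dots,z_d)\mapsto z_1\cdots z_d$ onto $\mathbb C$ is actually more direct than the paper's, which instead picks a regular point with coordinate product $c$, intersects $\cW_{\cP,\cU}$ with the hypersurface $\{z_1\cdots z_d=c\}$, and appeals to the dimension inequality $\dim(\cW_{\cP,\cU}\cap Y)\ge\dim\cW_{\cP,\cU}+\dim Y-d$ to force $k\le 1$.

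On the converse side your structure is again the paper's, and you correctly flag the two substantive ingredients as obstacles. The paper does not supply independent proofs of either: the existence of the measure $\mu$ with dense bounded point evaluations is quoted as Scheinker's theorem (Theorem~3.1 of \cite{DS}), and the finite-dimensionality of $\cD_{V^*}$ is not obtained by a degree count in the style of the $d=2$ case but is again quoted from Scheinker (Theorem~3.6 of \cite{DS}). So where you write ``a degree count in the spirit of the one in Theorem~\ref{ourDV}'', the paper simply cites \cite{DS}; your instinct that this is the crux of the one-dimensionality hypothesis is correct, but the actual work lives in Scheinker's paper rather than in an elementary extension of the bidisc argument. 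The paper also inserts a short lemma (polynomial convexity of $\cW\cap\overline{\mathbb D^d}$) to justify the backward implication in the joint-eigenvalue characterization of $\cW\cap\mathbb D^d$, which you fold into ``the $d$-variable version of Lemma~\ref{jemultipliers}''.
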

\begin{proof}
The proof of the first part of the theorem (including the equivalence of (i), (ii) and (iii)) progresses along the same lines as the two variable situation except for non-emptiness and one dimensionality of $\cW_{\cP, \cU}$. We shall prove only these two.

To see that $\cW_{\cP,\cU}\cap\mathbb D^d$ is non-empty, start with a non-trivial projection $P_j$. Then the matrix $P_j^\perp U_j$ has a zero eigenvalue. Since $(P_1^\perp U_1, P_2^\perp U_2,...., P_d^\perp U_d)$ is a commuting tuple of matrices, there is a joint eigenvalue $(\lambda_1, \ldots , \lambda_{j-1}, 0, \lambda_{j+1}, \lambda_d)$. Because of the containment \eqref{Variety_Cont}, the $\lambda_i$ have to be in $\mathbb D$ for all $i$. Hence $\cW_{\cP,\cU}\cap\mathbb D^d$ is non-empty.

To see that $\cW_{\cP,\cU}$ is one dimensional, let the dimension be $k$. Choose a regular point $(\lambda_1, \lambda_2, \ldots , \lambda_d)$ in $\cW_{\cP,\cU}$. Let $c = \lambda_1 . \lambda_2 . \ldots . \lambda_d$. Consider the hyperplane $Y = \{ (z_1, z_2, \ldots , z_d) : z_1 . z_2 \ldots . z_d = c\}$. Then, by Theorem 8.1 in \cite{Bez}, we get
\begin{align*} \dim (\cW_{\cP,\cU} \cap Y ) & \ge \dim (\cW_{\cP,\cU}) + \dim (Y) - d \\
\text{ or, } 0 & \ge k + d - 1 - d \end{align*}
showing that $k \le 1$. Since the set $\cW_{\cP,\cU}$ is infinite, $k = 1$.

The proof of the converse part, i.e., if $\cW$ is a one dimensional distinguished variety with respect to $\mathbb{D}^d$, then $\cW\cap\mathbb D^d=\cW_{\cP,\cU}\cap\mathbb D^d$ for some pure finite model tuple $(\cF,\cP,\cU)$, requires a few lemmas and a theorem of Scheinker that we list below. We sometimes omit the proofs because they are either elementary or along the same lines as before.

\begin{lemma}\label{Poly-Con}
Let $\cW$ be an algebraic variety in $\mathbb{C}^{d}$. Then $\cW \cap \overline{\mathbb D^d}$ is polynomial convex.
\end{lemma}

As before, we shall denote by $\partial{\cW}$ the set $\cW \cap \mathbb T^d$ for an algebraic variety $\cW$ in $\mathbb C^d$. If $\mu$ is a finite positive measure on $\partial{\cW}$, denote by $H^2(\mu)$ the norm closure of polynomials in $L^2(\partial{\cW},\mu)$. The following theorem is by Scheinker, see Theorem 3.1 in \cite{DS}.

\begin{thm}\label{m:bound}
Given a one dimensional distinguished variety $\cW$ with respect to $\mathbb{D}^d$, there is a finite regular Borel measure $\mu$ on $\partial{\cW}$ such that every point in $\cW\cap\mathbb D^d$ is a bounded point evaluation for $H^2(\mu)$ and such that the span of the evaluation functionals are dense in $H^2(\mu)$.
\end{thm}
Let   $M_{z_1},M_{z_2},\ldots,M_{z_d}$ denote the multiplication by the coordinate functions. Scheinker's result above along with polynomial convexity of $\cW \cap \mathbb D^d$ gives us the following result.

\begin{lemma}\label{p.v}
Let $\cW$ be a one dimensional distinguished variety with respect to $\mathbb{D}^d$. Then a point $(z_1,z_2,\ldots,z_d)$ is in $\cW\cap{\mathbb{D}^d}$ if and only if $(\overline{z_1},\overline{z_2},\ldots,\overline{z_d})$ is a joint eigenvalue of $(M_{z_1}^{*},M_{z_2}^{*},\ldots,M_{z_d}^{*})$.
\end{lemma}

\begin{proof}
Let $w$ be a point in ${\cW}\cap\mathbb D^d$ and $k_w$ be the corresponding evaluation functional on $H^2(\mu)$. By Scheinker's theorem, $H^2(\mu)$ is a reproducing kernel Hilbert space. Hence $M_{f}^{*}k_w=\overline{f(w)}k_w$ for every multiplier $f$ and every kernel function $k_w$. In particular, $(\overline{z_1},\overline{z_2},\ldots,\overline{z_d})$ is a joint eigenvalue of $(M_{z_1}^{*},M_{z_2}^{*},\ldots,M_{z_d}^{*})$.

Conversely, if $(\overline{z_1},\overline{z_2},\ldots,\overline{z_d})$ is a joint eigenvalue of $(M_{z_1}^{*},M_{z_2}^{*},\ldots,M_{z_d}^{*})$, then there is a unit vector $u$ such that $M_{z_i}^{*}u=\overline{z_i}u$, for all $i=1,2,\ldots,d$.
Let $f$ be any polynomial in $d$ variables. Then  $f(z_1,z_2,\ldots,z_d)=\langle u, M_{f}^{*}u\rangle$. Therefore,
\begin{align*}
|f(z_1,z_2,\ldots,z_d)|\leq{||M_{f}||}=\sup_{(z_1,z_2,\ldots,z_d)\in{\cW}}{|f(z_1,z_2,\ldots,z_d)|}.
\end{align*}
So $(z_1,z_2,\ldots,z_d)$ is in the polynomial convex hull of $\cW\cap\mathbb D^d$. Since by Lemma \ref{Poly-Con}, $\cW\cap\mathbb D^d$ is polynomial convex, we have that $(z_1,z_2,\ldots,z_d)$ is in $\cW\cap\mathbb D^d$.
\end{proof}

We now note that the co-ordinate multiplications are pure isometries on $H^2(\mu)$ and it follows from Scheinker's work, Theorem 3.6 in \cite{DS}, that the defect space of the product of these pure isometries is finite dimensional. Thus, the situation is ripe to apply the Berger--Coburn--Lebow theorem mentioned at the beginning of this section. The rest of the proof now is the same as the proof in the two variable situation.

\end{proof}

\begin{remark}
Following arguments of Section 2, it can be shown that $\nu(\Phi_i(z)) < 1$ for all $i=1,2, \ldots , d$ and for all $z \in \mathbb D$ if the real-valued functions
$$z \rightarrow \nu(P_j^\perp U_j+z U_j^*P_j)$$
are all non-constant functions on the open unit disc $\mathbb{D}$.
\end{remark}

\vspace{0.1in} \noindent\textbf{Acknowledgement:}

The first and second named authors' research is supported by the University Grants Commission Centre for Advanced Studies.
The research of the third named author is supported partly by Professor Kalyan B.\ Sinha's Distinguished Fellowship Grant of SERB, Govt.\ of India, and DST-INSPIRE Faculty Fellowship DST/INSPIRE/04/2018/002458. The authors are thankful to Professor B.\ Krishna Das for a brief but valuable conversation.

\end{document}